\newtheorem{theorem}{Theorem}[section]
\newtheorem*{claim*}{Claim}
\newtheorem*{theorem*}{Theorem}
\newtheorem*{definition*}{Definition}
\newtheorem{corollary}[theorem]{Corollary}
\newtheorem{lemma}[theorem]{Lemma}
\newtheorem{remark}[theorem]{Remark}
\newtheorem{claim}[theorem]{Claim}
\newtheorem{fact}[theorem]{Fact}
\newtheorem{proposition}[theorem]{Proposition}
\newtheorem{definition}[theorem]{Definition}
\newcommand{\comm}[1]{}
\crefname{theorem}{Theorem}{Theorems}
\crefname{proposition}{Proposition}{Propositions}
\crefname{observation}{Observation}{Observations}
\crefname{lemma}{Lemma}{Lemmas}
\crefname{claim}{Claim}{Claims}
\crefname{problem}{Problem}{Problems}
\crefname{conjecture}{Conjecture}{Conjectures}
\crefname{question}{Question}{Questions}
\crefname{example}{Example}{Examples}
\crefname{fact}{Fact}{Facts}
\newcommand{\dom}{\operatorname{dom}}
\newcommand{\FINdep}{\mathsf{FINDEP}}
\newcommand{\local}{\mathsf{LOCAL}}
\newcommand{\olocal}{\mathsf{ULOCAL}}
\newcommand{\ulocal}{\mathsf{ULOCAL}}
\newcommand{\lca}{\mathsf{LCA}}
\newcommand{\LOCAL}{\mathsf{LOCAL}}
\newcommand{\RLOCAL}{\mathsf{R-LOCAL}}
\newcommand{\CLOCAL}{\mathsf{C-LOCAL}}
\newcommand{\slocal}{\mathsf{SLOCAL}}
\newcommand{\rlocal}{\mathsf{RLOCAL}}
\newcommand{\CONT}{\mathsf{CONTINOUS}}
\newcommand{\cont}{\mathsf{CONTINOUS}}
\newcommand{\LTOAST}{\mathsf{LAZY\;TOAST}}
\newcommand{\RLTOAST}{\mathsf{LAZY\;RTOAST}}
\newcommand{\RTOAST}{\mathsf{RTOAST}}
\newcommand{\TOAST}{\mathsf{TOAST}}
\newcommand{\ltoast}{\LTOAST}
\newcommand{\rltoast}{\RLTOAST}
\newcommand{\lrtoast}{\RLTOAST}
\newcommand{\rtoast}{\RTOAST}
\newcommand{\toast}{\TOAST}
\newcommand{\tow}{\mathrm{tower}}
\newcommand{\BOREL}{\mathsf{BOREL}}
\newcommand{\BAIRE}{\mathsf{BAIRE}}
\newcommand{\MEASURE}{\mathsf{MEASURE}}
\newcommand{\FIID}{\mathsf{fiid}}
\newcommand{\ffiid}{\mathsf{ffiid}}
\newcommand{\borel}{\mathsf{BOREL}}
\newcommand{\TOWER}{\mathsf{TAIL}(\tow(\Omega(r)))}
\newcommand{\baire}{\mathsf{BAIRE}}
\newcommand{\tail}{\mathsf{TAIL}}
\newcommand{\Cay}{\operatorname{Cay}}
\newcommand{\LG}{\operatorname{\bf FG}^{S}}
\newcommand{\RLG}{\operatorname{\bf FG}_{\bullet}^{S}}
\renewcommand{\P}{\mathrm{P}}
\newcommand{\E}{\textrm{\textbf{E}}}
\newcommand{\poly}{\operatorname{poly}}
\newcommand{\eps}{\varepsilon}
\newcommand{\fA}{\mathcal{A}}
\newcommand{\fB}{\mathcal{B}}
\newcommand{\fC}{\mathcal{C}}
\newcommand{\fD}{\mathcal{D}}
\newcommand{\fE}{\mathcal{E}}
\newcommand{\Epsilon}{\fE}
\newcommand{\fG}{\mathcal{G}}
\newcommand{\fP}{\mathcal{P}}
\newcommand{\R}{\mathbb{R}}
\newcommand{\Z}{\mathbb{Z}}
\def\leukfrac#1/#2{\leavevmode
               \kern.1em
                \raise.9ex\hbox{\the\scriptfont0 ${}_#1$}
                \hskip -1pt\kern-.1em
                /\kern-.15em\lower.10ex\hbox{\the\scriptfont0 ${}_#2$}}
\def\diam{\mathop{\operator@font diam}\nolimits}
\begin{document}
\title{Local Problems on Grids from the Perspective of Distributed Algorithms, Finitary Factors, and Descriptive Combinatorics
}

\author{Jan Grebík \\ \small{University of Warwick} \\ \small{jan.grebik@warwick.ac.uk} \and Václav Rozhoň\\ \small{ETH Zurich} \\ \small{rozhonv@ethz.ch}
}


\date{}
\maketitle
\renewcommand{\thefootnote}{\fnsymbol{footnote}} 
\footnotetext{\emph{2010 Mathematics Subject Classification.} Primary 05C15, 68W15, 60G10, 03E15.}     
\renewcommand{\thefootnote}{\arabic{footnote}} 


\begin{abstract}

    We present an intimate connection among the following fields:
    \begin{enumerate}
    \item [(a)] \emph{distributed local algorithms}: coming from the area of computer science,
    \item [(b)] \emph{finitary factors of iid processes}: coming from the area of analysis of randomized processes,
    \item [(c)] \emph{descriptive combinatorics}: coming from the area of combinatorics and measure theory. 
    \end{enumerate}    
    In particular, we study locally checkable problems on grids from all three perspectives. 
    Most of our results are for perspective (b) where we prove time hierarchy theorems similar to those known in the field (a) [Chang, Pettie FOCS 2017].   
    This approach that borrows techniques from the fields (a) and (c) implies a number of results about possible complexities of finitary factor solutions. 
    Among others, it answers three open questions of [Holroyd, Schramm, Wilson Annals of Prob. 2017] or the more general question of [Brandt et al. PODC 2017] who asked for a formal connection between the fields (a) and (b). 

    In general, we hope that our treatment will help to view all three perspectives as a part of a common theory of locality, in which we follow the insightful paper of [Bernshteyn 2020+]. 
\end{abstract}

\section{Introduction}

In this paper, we study local problems on $d$-dimensional grid graphs, for $d>1$, from three different perspectives: (a) the perspective of the theory of distributed algorithms, (b) the perspective of the theory of random processes, (c) the perspective of the field of descriptive set theory. 
Although different communities in these three fields ask very similar questions, no systematic connections between them were known, until the insightful work of Bernshteyn \cite{Bernshteyn2021LLL} (see also \cite{elek}) who applied results from the field (a) to the field (c). In this work, we get most specific results by applying techniques from the field (a) to (b), but perhaps more importantly, we try to present all three different perspectives as a part of one common theory (see \cref{fig:big_picture_grids}). 

In this introductory section, we first quickly present the three fields and define the object of interest, local problems, a class of problems that contains basic problems like vertex or edge coloring, matchings, and others. 
Then, we briefly present our results, most of which can be seen as inclusions between different classes of local problems shown in \cref{fig:big_picture_grids}.
We note that such connections are also studied in the context of paths and trees in two parallel papers \cite{grebik_rozhon2021LCL_on_paths,brandt_chang_grebik_grunau_rozhon_vidnyaszky2021LCLs_on_trees_descriptive}.

\paragraph{Distributed Computing}

The $\local$ model of computing \cite{linial92LOCAL} is motivated by understanding distributed algorithms in huge networks. 
As an example, consider the network of all wifi routers, where two routers are connected if they are close enough to exchange messages. In this case, each router should choose a different channel from its neighbors to communicate with user devices to avoid interference. In the graph-theoretic language, we want to color the network. Even if the maximum degree is $\Delta$ and we want to color it with $\Delta+1$ colors, the problem remains nontrivial, because the decision of each node should be done after a few rounds of communication with its neighbors. 

The $\local$ model formalizes this setup: we have a large network, with each node knowing its size, $n$, and perhaps some other parameter like the maximum degree $\Delta$. In the case of randomized algorithms, each node has an access to a random string, while in the case of deterministic algorithms, each node starts with a unique identifier from a range of size polynomial in $n$. 
In one step, each node can exchange any message with its neighbors and can perform arbitrary computations. We want to find a solution to a problem in as few rounds of communication as possible. 
Importantly, there is an equivalent view of $t$-steps $\local$ algorithms: such an algorithm is simply a function that maps $t$-hop neighborhoods to the final output. An algorithm is correct if and only if applying this function to each node solves the problem. 

There is a rich theory of distributed algorithms and the local complexity of many problems is understood. One example: if the input graph is the $d$-dimensional torus (think of the grid $\mathbb{Z}^d$ and identify nodes with the same remainder after dividing the coordinate with $n^{1/d}$), a simple picture emerges. Any local problem (we define those formally later) is solvable with local complexity $O(1)$, $\Theta(\log^* n)$, or we need to see the whole graph to solve it, i.e., the local complexity is $\Theta(n^{1/d})$ (see \cref{thm:classification_of_local_problems_on_grids}). For example, a proper coloring of nodes with $4$ colors has local complexity $\Theta(\log^* n)$, while coloring with $3$ colors has already complexity $\Theta(n^{1/d})$.

\paragraph{Finitary factors of iid processes}

Consider the following example of the so-called anti-ferromagnetic Ising model. 
Imagine atoms arranged in a grid $\mathbb{Z}^3$. Each atom can be in one of two states and there is a preference between neighboring atoms to be in a different state. We can formalize this by defining a potential function $V(x,y)$ indicating whether $x$ and $y$ are equal.  
The Ising model models the behavior of atoms in the grid as follows: for the potential $V$ and inverse temperature $\beta$, if we look at the atoms, we find them in a state $\Lambda$ with probability proportional to $\exp(-\beta \sum_{\{u, v\} \in E(\mathbb{Z}^d)} V(\Lambda(u), \Lambda(v)))$. If $\beta$ is close to zero, we sample essentially from the uniform distribution. If $\beta$ is infinite, we sample one of the two $2$-colorings of the grid. 

To understand better the behavior of phase transitions, one can ask the following question. For what values of $\beta$ can we think of the distribution $\mu$ over states from the Ising model as follows: there is some underlying random variable $X(u)$ at every node $u$, with these variables being independent and identically distributed. We can think of $\mu$ as being generated by a procedure, where each node explores the random variables in its neighborhood until it decides on its state, which happens with probability $1$. If this view of the underlying distribution is possible, we say it is a finitary factor of an iid process (ffiid). 
Intuitively, being an ffiid justifies the intuition that there is a correlation decay between different sites of the grid.

Although this definition was raised in a different context than distributed computing, it is actually almost identical to the definition of a distributed algorithm! The differences are two. First, the ffiid procedure does not know the value of $n$ since, in fact, it makes the most sense to consider it on infinite graphs (like the infinite grid $\mathbb{Z}^d$). Second, the natural measure of the complexity is different: it is the speed of the decay of the random variable corresponding to the time until the local algorithm finds the solution. 
Next, note that the algorithm has to return a correct solution with probability $1$, while the usual randomized local algorithm can err with a small probability. We call ffiids \emph{uniform local algorithms} to keep the language of distributed computing (see \cref{sec:preliminaries} for a precise definition of ffiid and more explanations). 
We define the class $\olocal(t(\eps))$ as the class of problems solvable by a uniform local algorithm if the probability of the needed radius being bigger than $t(\eps)$ is bounded by $\eps$ (see \cref{fig:big_picture_grids}). 

\paragraph{Descriptive Combinatorics}

A famous Banach-Tarski theorem states that one can split a three-dimensional unit volume ball into five pieces, translate and rotate each piece and end up with two unit volume balls. 
This theorem serves as an introductory warning in the introductory measure theory courses: not every subset of $\mathbb{R}^3$ can be measurable.
Similarly surprising was when in 1990 Laczkovich \cite{laczkovich} answered the famous Circle Squaring Problem of Tarski: A square of the unit area can be decomposed into finitely many pieces such that one can translate each of them to get a circle of unit area.
Unlike in Banach-Tarski paradox, this result was improved in recent years to make the pieces measurable in various senses \cite{OlegCircle, Circle, JordanCircle}.


This theorem and its subsequent strengthenings are the highlights of a field nowadays called descriptive combinatorics \cite{kechris_marks2016descriptive_comb_survey,pikhurko2021descriptive_comb_survey} that has close connections to distributed computing as was shown in an insightful paper by Bernshteyn \cite{Bernshteyn2021LLL}. 
Going back to the Circle Squaring Problem, the proofs of its versions first choose a clever set of constantly many translations $\tau_1, \dots, \tau_k$. Then, one constructs a bipartite graph between the vertices of the square and the circle and connects two vertices $x, y$ if there is a translation $\tau_i$ mapping the point $x$ in the square to the point $y$ in the circle. The problem can now be formulated as a perfect matching problem and this is how combinatorics enters the picture. In fact, versions of a matching or flow problems are then solved on the grid graph $\mathbb{Z}^d$ using a trick called \emph{toast construction} also used in the area of the study of ffiids \cite{HolroydSchrammWilson2017FinitaryColoring,ArankaFeriLaci,Spinka}. 

We discuss the descriptive combinatorics classes in more detail in the expository \cref{sec:OtherClasses} but we will not need them in the main body. Let us now give an intuitive definition of the class $\borel$. 
Due to a result of Chang and Pettie \cite{chang2016exp_separation}, we know that the maximal independent set problem is complete for the class of problems solvable in $O(\log^* n)$ steps in the sense that this problem can be solved in $O(\log^* n)$ steps and any problem in that class can be solved in a constant number of steps if we are given access to an oracle computing MIS (we make this precise in \cref{rem:chang_pettie_decomposition}).
Imagine now that the input graph is infinite and our local algorithm has access to a MIS oracle. Constructing MIS in the Borel setting is due to Kechris, Solecki and Todor\v{c}evi\'{c} \cite{KST}, and local computations but now both can be used countably many times instead of just once as is the case with the class of problems solvable in $O(\log^* n)$ steps. 
That is, we can think of an algorithm that constructs MIS of larger and larger powers of the input graph and uses this infinite sequence to solve a given local problem. 
Those constructions are in the class $\borel$ from \cref{fig:big_picture_grids}. In fact, the exact relationship between this definition and the full power of $\BOREL$ is not clear.
However, it should give an idea about the difference between descriptive combinatorics and distributed computing -- from the descriptive combinatorics perspective, reasonable classes are closed under repeating basic operations countably many times.

\paragraph{Locally Checkable Problems on Grids as a ``Rosetta Stone''}

Locally checkable problems (LCLs) are graph problems where the correctness of a solution can be checked locally. That is, there is a set of local constraints such that if they are satisfied at every node, the solution to a problem is correct. 
Examples of such problems include vertex coloring, edge coloring, or perfect matching. We also allow inputs on each node, so the list coloring is also an example of an LCL (given that the total number of colors in all lists is finite).

In this paper, we study locally checkable problems on grid graphs in the three settings introduced above. This approach reveals a number of connections. For example, in \cref{fig:big_picture_grids}, we can see that the two classes of problems solvable in $O(1)$ and $O(\log^* n)$ steps are equivalent to different natural classes from the perspective of (finitary) factors of iid and descriptive combinatorics. On the other hand, those two worlds are richer. Many of our discussions in fact apply to graphs of bounded growth, but the grids serve as the simplest, but still quite nontrivial model in which one can try to understand all three settings together. 

Needless to say, the grid is an important graph per se -- the Ising model is studied on grids, and we already mentioned that the famous Circle Squaring Problem and other problems in descriptive combinatorics \cite{GJKS,GJKS2}, or \cite[Section~6]{CGP} boil down to constructions on grids.

\subsection{Our Contribution }

Our contribution can be mostly summed up as several new arrows in \cref{fig:big_picture_grids}. In general,  this clarifies several basic questions regarding the connections of the three fields and helps to view all three fields as part of a general theory of locality.


For example, consider the following question from the paper of Brandt, Hirvonen, Korhonen, Lempi\"{a}inen, \"{O}sterg\r{a}rd, Purcell, Rybicki, Suomela, and  Uzna\'{n}ski \cite{brandt_grids} that considered LCL problems on grids: 
\begin{quote}
\emph{
However, despite the fact that techniques seem to translate between finitary colorings and distributed complexity, it remains unclear how to directly translate results from one setting to the other in a black-box manner; for instance, can we derive the lower bound for 3-colouring from
the results of Holroyd et al., and does our complexity classification imply answers to the open questions they pose?
}
\end{quote}
We show how to 'directly translate' results between the setting of distributed algorithms and finitary factors of iid in \cref{subsec:preliminaries_tail,subsec:speedup_warmup}, using the language of uniform distributed algorithms. This indeed answers two open questions of Holroyd, Schramm, and Wilson  \cite{HolroydSchrammWilson2017FinitaryColoring}. 

To understand the connection between the world of factors of iids and descriptive combinatorics, we formalize the notion of toast construction used in both setups. We prove general results connecting the toast constructions with uniform local algorithms and this enables us to understand the $\olocal$ complexity  of a number of concrete problems like vertex $3$-coloring, edge $4$-coloring, perfect matching, and others. 

In general, we hope that analyzing uniform local complexities of uniform local algorithms will find other use-cases. 
We discuss in \cref{subsec:congestLCA} how it relates to some concepts close to distributed computing. 

We now list concrete results that we either prove here or they follow from the work of others (see \cref{fig:big_picture_grids}).

\begin{landscape}
\begin{figure}
    \centering
    \includegraphics[width = 1.5\textwidth]{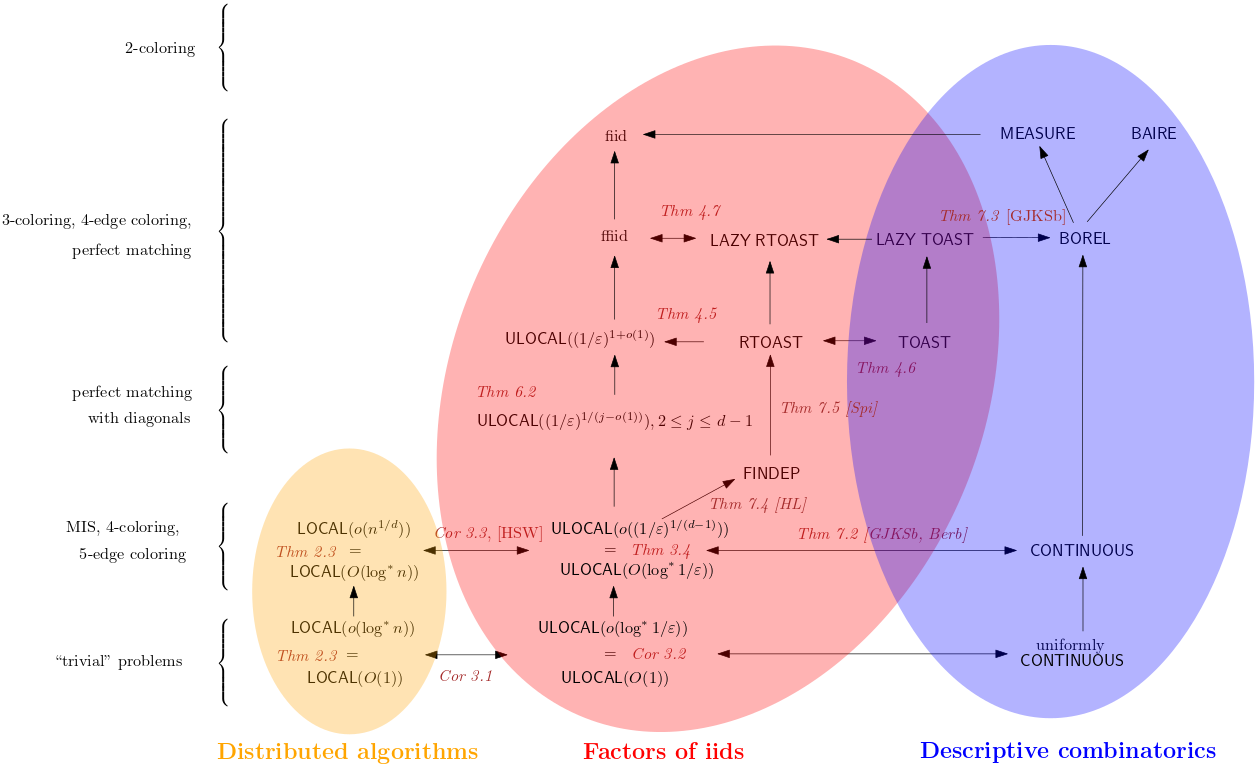}
    \caption{The big picture of local complexity classes on $d$-dimensional grids from the perspective of distributed algorithms, factors of iid, and descriptive combinatorics. The canonical example of each class is on the left side. } 
    \label{fig:big_picture_grids}
\end{figure}
\end{landscape}

\paragraph{Classification Theorems for $\olocal$}

We prove several results that one can view as arrows in \cref{fig:big_picture_grids}. Those can be viewed as an analogy to the line of work aiming for the classification of $\local$ complexities discussed in \cref{subsec:preliminaries_local}. 

\begin{itemize}
    \item $\olocal(o(\log^* 1/\eps)) = \local(O(1))$\hspace*{1cm} We show that any uniform local algorithm with uniform complexity faster than $\log^*$ function can be sped up to $O(1)$ local complexity. This answers a question of \cite{HolroydSchrammWilson2017FinitaryColoring}. 
    \item $\local(O(\log^* n)) = \olocal(O(\log^* 1/\eps)) = \cont$\hspace*{1cm} There are three natural definitions of classes of basic symmetry breaking problems in all three considered fields, and they are the same (this follows mainly from \cite{GJKS,HolroydSchrammWilson2017FinitaryColoring}). 
    As a direct application, one gets that from the three proofs that $3$-coloring of grids is not in $\local(O(\log^* n))$ \cite{brandt_grids}, not in $ \olocal(O(\log^* 1/\eps))$ \cite{HolroydSchrammWilson2017FinitaryColoring} and not in $ \cont$ \cite{GJKS} any one implies the others. These three classes are the same in a much more general setup (see \cref{subsec:CONT}). 
    \item $\olocal(o(\sqrt[d-1]{1/\eps})) = \olocal(O(\log^* 1/\eps))$\hspace*{1cm} We prove that any problem that can be solved with a uniform local algorithm with sufficiently small uniform complexity can in fact be solved in a $O(\log^* 1/\eps)$ uniform complexity. This answers a question from  \cite{HolroydSchrammWilson2017FinitaryColoring}. 
    \item Similarly to the time hierarchy theorems of the $\local$ model \cite{chang2017time_hierarchy,chang2016exp_separation}, we give examples of LCL problems with uniform complexities $\Theta(\sqrt[j - o(1)]{1/\eps})$, for any $1 \le j \le d-1$. 
\end{itemize}

\paragraph{$\toast$ constructions}
We formally define $\toast$ algorithms (these constructions are used routinely in descriptive combinatorics and the theory of random processes) and prove some general results about them. 

Let us first explain the toast constructions in more detail. A $q$-toast in the infinite graph $\mathbb{Z}^d$ is a collection $\fD$ of finite connected subsets of $\mathbb{Z}^d$ that we call \emph{pieces} such that (a) for any two nodes of the graph we can find a piece that contains both, (b) boundaries of any two different pieces are at least distance  $q$ apart. 
That is, imagine a collection of bigger and bigger pieces that form a laminar family (only inclusions, not intersections) that, in the limit, covers the whole infinite graph. 

This structure can be built in the descriptive combinatorics setups \cite{FirstToast,GJKS,Circle} but also with uniform local algorithms \cite{HolroydSchrammWilson2017FinitaryColoring,Spinka} (under the name of finitary factors of iid). 
In fact, we show that one can construct this structure with the uniform local complexity $O((1/\eps)^{1+o(1)})$ in \cref{thm:toast_in_tail}. This is best possible up to the $o(1)$ loss in the exponent. 
If one has an inductive procedure for building the toast, we can try to solve an LCL problem during the construction. That is, whenever a new piece of the toast is constructed, we define how the solution to the problem looks like in this piece. This is what we call a $\toast$ algorithm. 
There is a randomized variant of the $\toast$ algorithm and a lazy variant. In the lazy variant, one can postpone the decision to color a given piece, but it needs to be colored at some point in the future. 
Next, we prove several relations between the $\toast$ classes (see \cref{fig:big_picture_grids}).  

\begin{itemize}
    \item $\toast \subseteq \ulocal (O((1/\eps)^{1+o(1)}))$\hspace*{1cm} This result follows from the construction of the toast structure in this uniform complexity and allows us to translate toast-algorithm constructions used in descriptive combinatorics to the setting of uniform local algorithms. 
    This includes problems like vertex $3$-coloring, edge $2d$-coloring, or perfect matching.
    In the case of $3$-coloring we show that this complexity is optimal (this answers a question from  \cite{HolroydSchrammWilson2017FinitaryColoring}).
    \item $\rtoast = \toast$\hspace*{1cm} This derandomization result implies a surprising inclusion $\FINdep \subseteq \borel$ -- these two classes are discussed in \cref{sec:OtherClasses}. 
    \item $\ffiid = \lrtoast$\hspace*{1cm} Here, $\ffiid$ is defined as the union of $\olocal(t(\eps))$ over all functions $t$. 
    This result shows how the language of toast constructions can in fact characterize the whole hierarchy of constructions by uniform local algorithms. 
    \item We show that the problem of $2d$ edge coloring of $\mathbb{Z}^d$ is in $\toast$, hence in $\borel$ which answers a question from \cite{GJKS}. Independently, this was proven by \cite{ArankaFeriLaci} and \cite{Felix}. 
\end{itemize}

\subsection{Roadmap}

In \cref{sec:preliminaries} we recall the basic definitions regarding the $\local$ model and explain the formal connection between this model and finitary factors of iids. 
Next, in \cref{sec:speedup_theorems} we prove the basic speedup theorems for $\olocal$. 
In \cref{sec:Toast} we define $\toast$ algorithms and relate them to the $\olocal$ model. 
In \cref{sec:SpecificProblems} we discuss what our results mean for specific local problems and in \cref{sec:Inter} we construct problems with $\sqrt[j-o(1)]{1/\eps}$ uniform complexities for any $1 \le j \le d-1$. Finally, in \cref{sec:OtherClasses} we discuss the connection of our work to other classes of problems and finish with open questions in \cref{sec:open_problems}.
Technical proofs are collected in the appendices.

\section{Preliminaries}
\label{sec:preliminaries}

Let $d\in \mathbb{N}$ be fixed.
The class of graphs that we consider in this work contains mostly the infinite oriented grid $\mathbb{Z}^d$, that is $\mathbb{Z}^d$ with the Cayley graph structure given by the standard set of generators 
$$S=\{(1,0,\dots, 0),(0,1,\dots, 0), \dots (0,\dots,0,1)\}. $$
On the side of local algorithms, we also use the finite variant of the graph: the torus of side length $\Theta(n^{1/d})$ with edges oriented and labeled by $S$.
Usually, when we talk about a graph $G$ we mean one of the graphs above and it follows from the context whether we mean finite or infinite graph (for local complexities it is a finite graph, for uniform complexities, and in the descriptive combinatorics setups an infinite graph). 

Given a graph $G$, we denote as $V(G)$ the set of vertices, as $E(G)$ the set of edges, and as $d_G$ the \emph{graph distance} on $G$, i.e., $d_G(v,w)$ is the minimal number of edges that connects $v$ and $w$.
We write $\mathcal{B}_G(v,r)$ for the \emph{ball of radius $r\in \mathbb{N}$} around $v\in V(G)$. When $G$ is the infinite grid $\mathbb{Z}^d$, then we leave the subscripts blank.
A \emph{rooted graph} is a pair $(G,v)$ where $G$ is a graph and $v\in V(G)$.

\subsection{LCL problems on $\mathbb{Z}^d$}

Our goal is to understand locally checkable problems (LCLs) on $\mathbb{Z}^d$. We next give a formal definition. Note that formally we also allow inputs to the problem but do not really use them for specific problems in this paper.  

\begin{definition}[Locally checkable problems \cite{naorstockmeyer}]
A \emph{locally checkable problem (LCL)} on $\mathbb{Z}^d$ is a quadruple $\Pi=(\Sigma_{in}, \Sigma_{out},t,\mathcal{P})$ where $\Sigma_{in}$ and $\Sigma_{out}$ are finite sets, $t\in \mathbb{N}$ and $\mathcal{P}$ is a set of rooted grids of radius $t$ with each vertex labelled by one input label from $\Sigma_{in}$ and one output label from $\Sigma_{out}$. 

Given a graph $G$ together with a coloring $c:V(G)\to \Sigma_{in}\times\Sigma_{out}$, we say that $c$ is a \emph{$\Pi$-coloring} if
for every $v \in V(G)$ the $t$-hop neighborhood of $v$ is in $\fP$. 
\end{definition}

\subsection{Local algorithms}
\label{subsec:preliminaries_local}

Here we formally introduce the $\local$ model of deterministic and randomized distributed local algorithms and state the classification of LCLs on $\mathbb{Z}^d$ from the $\local$ point of view.

\begin{definition}[A local algorithm for $\mathbb{Z}^d$]
\label{def:local_algorithm}
A \emph{local deterministic algorithm $\mathcal{A}$} with local complexity $f(n)$ is a function that takes as input $n$ and a rooted grid of radius $f(n)$ labeled with numbers (identifiers) from $\mathbb{N}$ and input labels from $\Sigma_{in}$. It outputs a single label from $\Sigma_{out}$. 
Running a local algorithm means taking a finite torus $G$ on $n$ vertices, labeling each node of it with a unique identifier from $[n^{O(1)}]$ and applying the above function to every node to get the output label of it. 

We say that $\fA$ solves an LCL $\Pi$ if it produces a $\Pi$-coloring when applied to each finite torus as above.

A local randomized algorithms instead of unique identifiers assumes each node has an infinite (long enough) random string.
We say that it solves an LCL $\Pi$ if it produces a $\Pi$-coloring with probability $1 - 1/n^{O(1)}$. 
\end{definition}

We also say $\Pi \in \local(f(n))$ or $\Pi \in \rlocal(f(n))$ if there is a deterministic or randomized algorithm solving $\Pi$ with local complexity $f(n)$. 

\paragraph{Classification of Local Problems on $\mathbb{Z}^d$}

In this section, we state a special case of \emph{Classification of LCLs}. This is an ongoing effort in the area of distributed algorithms and our current understanding emerges as a corollary of many results that follow from several recent lines of work on local algorithms including \cite{naorstockmeyer, brandt,chang2016exp_separation,chang2017time_hierarchy, ghaffari_kuhn_maus2017slocal,ghaffari_harris_kuhn2018derandomizing,balliu2018new_classes-loglog*-log*, RozhonG19, ghaffari2019distributed, ghaffari_grunau_rozhon2020improved_network_decomposition,chang2020n1k_speedups,balliu2020almost_global_problems,brandt_grebik_grunau_rozhon2021classification_of_lcls_trees_and_grids} and many others.
For our purposes, we will need only a special case of the Classification for grids. Fortunately, here (and on trees) all possible local complexities of an LCL are completely understood. This result is stated in \cref{thm:classification_of_local_problems_on_grids}. 

\begin{theorem}[Classification of LCLs on $\mathbb{Z}^d$ \cite{naorstockmeyer, chang2016exp_separation, chang2017time_hierarchy, brandt_grids, brandt_grebik_grunau_rozhon2021classification_of_lcls_trees_and_grids}]
\label{thm:classification_of_local_problems_on_grids}
Let $\Pi$ be an LCL.
Then one of the following is true. 
\begin{enumerate}
\item $\Pi \in \local(O(1))$ and if $\Sigma_{in} = \emptyset$ then $\Pi$ is trivial, meaning that there is $\alpha\in \Sigma_{out}$ such that the constant $\alpha$ coloring is a $\Pi$-coloring,
\item $\Pi \in \local(\Theta(\log^* n)), \Pi \in \rlocal(\Theta(\log^* n))$,
\item $\Pi$ is not in $\local(o(n^{1/d}))$ or $\rlocal(o(n^{1/d}))$.
\end{enumerate}
\end{theorem}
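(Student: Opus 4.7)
The plan is to derive the trichotomy by combining two ``gap'' theorems, ruling out local complexities strictly between $O(1)$ and $\Omega(\log^* n)$ and strictly between $O(\log^* n)$ and $\Omega(n^{1/d})$, with an equivalence between the deterministic and randomized models in the non-global regime. Throughout, the key feature of $\mathbb{Z}^d$ that I would exploit is that every radius-$t$ ball in a sufficiently large torus is isomorphic, as an edge-labeled rooted graph, to $\mathcal{B}({\bf 0}_d,t)$, so an algorithm's output depends only on the identifiers (or random strings) decorating these identical skeletons.

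For the first gap I would invoke the classical Naor--Stockmeyer Ramsey-style argument. Given $\mathcal{A}\in\local(f(n))$ with $f(n)=o(\log^* n)$, on a large enough torus a Ramsey argument on identifier assignments yields a subrange of identifiers on which $\mathcal{A}$'s output on any $f(n)$-ball depends only on the order type of the identifiers; a short compactness/extraction step then produces a uniform $O(1)$-round order-invariant algorithm. When $\Sigma_{in}=\emptyset$, one runs this order-invariant algorithm with a canonical periodic identifier assignment on an arbitrarily large torus. The resulting output is invariant under the periodicity lattice, so by pigeonhole some output label $\alpha\in\Sigma_{out}$ is attained on a shift-invariant pattern, and local checkability forces the constant-$\alpha$ labeling to be a valid $\Pi$-coloring, making $\Pi$ trivial.

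For the second, and substantially harder, gap I would appeal to the classification of LCLs on grids from Brandt et al.\ and Brandt--Grebik--Grunau--Rozhon. The structural idea is a speedup adapted to the amenable geometry of $\mathbb{Z}^d$: if $\mathcal{A}$ solves $\Pi$ in $r(n)=o(n^{1/d})$ rounds, one partitions a large torus into tiles of side $L\gg r(n)$ and constructs a consistent solution tile by tile. The heart of the argument is to simulate $\mathcal{A}$ on much larger ``virtual'' toroidal extensions of each tile, and then glue the resulting partial solutions along a low-dimensional ``skeleton'' separating adjacent tiles, where the gluing is done via Linial-style $O(\log^* n)$-round symmetry breaking; local checkability of $\Pi$ ensures the glued labeling is globally valid. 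This yields an algorithm in $\local(O(\log^* n))$, while Linial's classical $\Omega(\log^* n)$ lower bound supplies the matching bound unless $\Pi\in\local(O(1))$.

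To transfer the deterministic gaps to the randomized setting I would use the Chang--Kopelowitz--Pettie equivalence: in any regime below $n^{1/d}$ a randomized LCL algorithm can be derandomized at the cost of at most a $\log^* n$ factor, which combined with the deterministic gaps forces the $\rlocal$ complexities into the same three classes. The main obstacle is unambiguously the $\log^* n$-to-$n^{1/d}$ gap: unlike on trees, where one finds intermediate complexities of the form $\Theta(n^{1/k})$, on grids the amenability of $\mathbb{Z}^d$ and the absence of useful ``branching'' collapse the spectrum, and making this collapse rigorous requires the delicate geometric speedup construction from the cited classification papers, which is by far the deepest ingredient in the chain of results assembled here.
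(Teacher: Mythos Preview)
The paper does not give its own proof of this theorem: it is stated as a known classification result, attributed to the cited works, and used as a black box in the preliminaries. So there is nothing in the paper to compare your proposal against directly; what you have written is essentially a roadmap through the cited literature, and as such it is broadly accurate.

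Two small points. First, your triviality argument for the case $\Sigma_{in}=\emptyset$ is slightly garbled. Once Naor--Stockmeyer gives you an order-invariant $O(1)$-round algorithm, the clean way to conclude is to choose a specific identifier assignment on a large torus (for example a lexicographic linear function of the coordinates) so that every ball of the relevant radius has the \emph{same} order type; then every vertex outputs the same label $\alpha$, and local checkability immediately makes the constant-$\alpha$ coloring valid. Your pigeonhole sentence (``some label $\alpha$ is attained on a shift-invariant pattern, and local checkability forces the constant-$\alpha$ labeling to be valid'') does not quite get there, since a label appearing in a valid periodic pattern need not give a valid constant coloring.

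Second, your description of the $o(n^{1/d})\to O(\log^* n)$ speedup (``simulate $\mathcal{A}$ on virtual toroidal extensions and glue along a skeleton'') is in the right spirit but not the literal mechanism in \cite{brandt_grids}. There one rather observes that an $o(n^{1/d})$ algorithm, run on a single sufficiently large torus, produces a fixed valid labeling of that torus; this labeling is then used as a \emph{template} to color tiles of any larger instance, with $O(\log^* n)$ rounds spent only on producing the tiling itself (as in the paper's \cref{rem:chang_pettie_decomposition}). This is a small distinction, but your phrasing suggests re-running $\mathcal{A}$ per tile, which is not what happens.
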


\begin{remark}[\cite{chang2016exp_separation,Korman_Sereni_Viennot2012Pruning_algorithms_+_oblivious_coloring}]
\label{rem:chang_pettie_decomposition}
In fact, any LCL with inputs  can be solved in $\local(O(\log^* n))$ complexity as follows. First, we compute a maximal independent set (MIS) of a large enough power of the input graph, and then we run a local algorithm with just $O(1)$ local complexity.\footnote{In this sense, MIS is a complete problem for the class $\local(O(\log^* n))$. }
Moreover, the algorithm in points (1) and (2) can be uniform, see \cref{subsec:preliminaries_tail}, and the algorithm in point (1) does not need to access the input unique  identifiers. 
\end{remark}

\begin{remark}[\cite{brandt_grids}]
The problem of checking whether $\Pi \in \local(O(\log^* n))$ is undecidable. 
\end{remark}

\subsection{Uniform Algorithms}
\label{subsec:preliminaries_tail}

In this section, we define uniform algorithms and in \cref{subsec:subshifts} we show that this definition is equivalent to the notion of finitary factors of iid processes. 

An \emph{uniform local algorithm} $\fA$ is similar to the randomized local algorithm, but the definition is changed to suit to infinite graphs. 
The algorithm, run at a vertex $u$, scans its neighborhood until it scans enough of it to output a label from $\Sigma_{out}$. The algorithm needs to be correct with probability $1$. 
In particular, $\fA$ can be viewed as a function on rooted graphs with vertices labeled by both $\Sigma_{in}$ and random strings, that is, formally, $\fA=(\fC,f)$, where $\fC$ is a collection of rooted graphs (from some class) and $f:\fC\to\Sigma_{out}$ is a measurable function. 
We define the \emph{coding radius $R_\fA$} of $\fA$ as the minimal distance needed to output a solution.

\begin{remark}
In this paper, we only consider uniform local algorithms that are defined on infinite grids.
That is, $\fC$ only contains a rooted neighborhood of vertices from the infinite grid labeled with reals from $[0,1]$.
These algorithms can be run on finite tori (\cref{lem:tail->local} and \cref{thm:tail=uniform}), however, if the algorithm does not stop before it explores the whole finite graph, then the output is undefined.

It is not always possible to extend the domain of the algorithm to finite rooted tori.
To see this consider the perfect matching problem.
There is a non-trivial uniform algorithm that produces perfect matching with probability $1$, see \cref{thm:perfect_matching_intoast}.
But there are finite tori of an odd number of vertices that do not admit (existentially) perfect matching.
\end{remark}

As a measure of the complexity of a uniform local algorithm, we can choose the same measure as in the classical case, that is, the distance it needs to look at to provide a correct answer with probability $1 - 1/n^{O(1)}$. 
However, it is more natural to consider the \emph{uniform complexity} of an algorithm. Uniform complexity is a function $f(\eps)$ such that when the algorithm looks into distance $f(\eps)$, the probability it needs to look further is at most $\eps$.
In other words, for any $r \in \mathbb{N}$, the probability that the coding radius $R_\fA$ is bigger than $r$ is at most $f^{-1}(r)$.
In fact, since we have inputs, we should be more careful and emphasize that when we write, e.g.,
$$\P(R_\fA > f(\eps)) < \eps$$
we mean the following.
The left-hand side is a supremum over the events that a rooted graph of radius $f(\eps)$ (from the class that we are interested in) with $\Sigma_{in}$-labeled vertices is not in $\fA$.
Note that if there are no inputs and we work on transitive graphs, e.g. grids, then the left-hand side can be defined by the event that some, or equivalently any, vertex needs to look further than $f(\eps)$.


One can think about uniform algorithms as an analog of Las Vegas algorithms, while the classical randomized local algorithms correspond to Monte Carlo algorithms. 

The correspondence between classical and uniform randomized algorithms is the following. 
First, uniform local algorithms can be seen as a special case of randomized ones. 
This is because if we run a uniform algorithm with uniform complexity $t(\eps)$ on an $n$-node graph and mark its run as a failure if the coding radius exceeds $t(1/n^C)$, the failure at some node happens only with  probability at most $n \cdot 1/n^C = 1/n^{C-1}$. We state this as a fact. 

\begin{fact}
\label{lem:tail->local}
Let $\fA$ be a uniform local randomized algorithm on grids with uniform local complexity $t(\eps)\in o(^d\sqrt{1/\eps})$. Then the randomized local complexity of $\fA$ is $t(1/n^{\Omega(1)})$. 
\end{fact}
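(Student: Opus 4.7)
The plan is essentially to simulate the uniform algorithm but truncate exploration at a carefully chosen radius, then use a union bound to argue correctness with high probability.

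More concretely, fix a constant $C > 1$ (to be chosen depending on the desired polynomial failure probability $1/n^{O(1)}$). Given the finite torus $G$ on $n$ vertices together with its random string labeling, I define the randomized local algorithm $\fA'$ as follows. Each vertex $v$ simulates $\fA$ up to radius $t(1/n^C)$. If within this radius the coding radius $R_\fA$ at $v$ has already been reached, $v$ outputs the label $\fA$ would produce. Otherwise, $v$ outputs an arbitrary fixed label from $\Sigma_{out}$. By construction, the local complexity of $\fA'$ is $t(1/n^C) = t(1/n^{O(1)})$, which matches the claimed bound.

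For correctness, I need to bound the probability that $\fA'$ fails to produce a $\Pi$-coloring. Call a vertex $v$ \emph{bad} if $R_\fA(v) > t(1/n^C)$. By the definition of uniform complexity, for each individual vertex $v$ the probability that $v$ is bad is at most $1/n^C$, \emph{uniformly} over the input labeling in $\Sigma_{in}$ seen in its radius-$t(1/n^C)$ ball. A union bound over the $n$ vertices gives that the probability of \emph{some} bad vertex is at most $n \cdot n^{-C} = n^{-(C-1)}$. On the complementary event, every vertex has coding radius at most $t(1/n^C)$, so $\fA'$ outputs exactly the same labeling as $\fA$ would. Since $\fA$ solves $\Pi$ with probability $1$, this labeling is a correct $\Pi$-coloring. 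Choosing $C$ large enough makes the failure probability smaller than any prescribed $1/n^{O(1)}$.

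The only subtle point — and the one worth stating carefully — is that "probability $1$ correctness" of the uniform algorithm on the infinite grid translates to "correctness conditional on all vertices terminating" in the finite torus setting. This is clean because local correctness of a $\Pi$-coloring is checkable in a radius-$t$ ball (where $t$ is the checking radius of $\Pi$), and for every vertex $v$ in the torus the radius-$t(1/n^C)+t$ ball around $v$ is isomorphic to the corresponding ball in $\mathbb{Z}^d$ (provided the torus side length exceeds $2t(1/n^C)+2t$, which is implicit in the regime we care about). Hence the output of $\fA$ viewed through any local checker in the torus agrees with its output in $\mathbb{Z}^d$, and local correctness transfers. No further obstacle arises, and the statement follows.
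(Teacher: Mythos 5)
Your proof is correct and follows essentially the same route as the paper: the paper's justification (given in the sentence immediately preceding the Fact) is exactly the truncation-at-radius-$t(1/n^C)$ simulation plus a union bound over the $n$ nodes giving failure probability $n\cdot n^{-C}=n^{-(C-1)}$. Your additional remark about transferring probability-$1$ correctness from $\mathbb{Z}^d$ to the torus via the local checkability of $\Pi$ is a reasonable elaboration of a point the paper leaves implicit, but it does not change the argument.
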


On the other hand, we prove that for local complexities  $o(\log n)$, the randomized complexity of a uniform algorithm is equal to its uniform complexity. 
This means that to study uniform complexities below $\log n$, it suffices to check whether existing local algorithms work without knowledge of $n$. The following lemma is stated for the class of bounded degree graphs but works for any reasonable graph class.
In particular, for grids or trees.

\begin{lemma}\label{thm:tail=uniform}
Let $\fA$ be a uniform local randomized algorithm solving an LCL problem $\Pi = (\Sigma_{in},\Sigma_{out},t,\fP)$ on the class of bounded degree graphs with randomized complexity $f(n) = o(\log n)$. Then its uniform complexity is $O(f(1/\eps))$. 
\end{lemma}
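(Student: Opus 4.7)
Plan. My plan rests on observing that for a uniform algorithm $\fA$, its randomized complexity $f(n)$ is defined precisely by the radius needed so that with probability at least $1-1/n^C$ (for some constant $C$) the coding radius is at most $f(n)$ at \emph{every} vertex of an $n$-vertex graph. Indeed, a uniform algorithm truncated at radius $r$ produces a correct $\Pi$-coloring as a randomized $\local$ algorithm if and only if $R_\fA\le r$ at every vertex: where $R_\fA\le r$ the truncated algorithm agrees with the (always-correct) full $\fA$, and otherwise a failure symbol (or an uninformed guess treated as failure) must be emitted. This is exactly the convention used in \cref{lem:tail->local}.

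First, given $\eps>0$, I would pick $n:=\lceil(1/\eps)^{1/C}\rceil$ so that $1/n^C\le\eps$, and instantiate a bounded-degree graph $G_n$ on $n$ vertices that is locally isomorphic to the target graph up to radius $f(n)+t$ (e.g.\ a torus for grids; for a general bounded-degree family we work inside any ambient $n$-vertex graph whose $f(n)$-balls realize the local types of interest). Since $f(n)=o(\log n)$, the locally-isomorphic radius around most vertices of $G_n$ is automatically much larger than $f(n)$. The convention above gives
\[
P\bigl(\exists v\in V(G_n):\,R_\fA(v)>f(n)\bigr)\;\le\;1/n^C\;\le\;\eps,
\]
and applying this to any single interior vertex $v_0$ together with uniformity of $\fA$ yields $P(R_\fA(v_0)>f(n))\le\eps$. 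Hence the uniform complexity of $\fA$ at $\eps$ is at most $f(n)$.

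To conclude I would invoke the regularity property $f(n^{O(1)})=O(f(n))$ to upgrade $f(n)=f(\lceil(1/\eps)^{1/C}\rceil)=O(f(1/\eps))$; this is automatic for all natural LCL complexities ($\log^* m$, $\log\log m$, $\log m$, \ldots) arising at the $o(\log n)$ scale. The only delicate point is pinning down the convention for ``randomized complexity of a uniform algorithm''; once fixed, the proof is essentially a one-line union bound. An alternative convention allowing an uninformed guess to coincidentally satisfy the LCL constraints would require an additional argument showing that such lucky guesses are rare for a nontrivial LCL, which follows from the independence of the random bits supplied to different vertices, but does not change the final asymptotic bound.
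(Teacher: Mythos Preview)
Your proposal is correct and follows essentially the same route as the paper: use $f(n)=o(\log n)$ to ensure $\Delta^{f(n)+t}=o(n)$ so that the relevant $(f(n)+t)$-ball embeds in an $n$-vertex graph, then read off the single-vertex tail bound $P(R_\fA>f(n))\le 1/n^{O(1)}$ from the randomized-complexity guarantee; the paper phrases this contrapositively while you do it directly. Your regularity caveat $f(m^{O(1)})=O(f(m))$ is harmless but unnecessary once one takes the error exponent $C\ge 1$ (so $n=(1/\eps)^{1/C}\le 1/\eps$ and monotonicity of $f$ suffices), which is exactly what the paper's terse ``rearranging'' step implicitly does.
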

\begin{proof}
Take any $n$ large enough.
We claim that $\P(R_\fA > f(n)) < 1/n^{O(1)}$.
Suppose not.
Then by the definition and the fact that $\Delta^{f(n)+t} = o(n)$ we find an $n$-node graph $G$ on which $\fA$ needs to look further than $f(n)$ on some node with probability bigger than $1/n^{O(1)}$.
This contradicts the assumption on the randomized complexity of $\fA$.
Rearranging gives $\P(R_\fA > O(f(1/\eps)) < \eps$. 
\end{proof}
This lemma in general cannot be improved -- an $n$-node tree can be $3$-colored with a uniform algorithm with randomized local complexity $O(\log n)$ by a so-called rake-and-compress method, but the uniform complexity of $3$-coloring is unbounded, as otherwise $99\%$ of vertices of any $\Delta$-regular high girth graph could be $3$-colored. But this is impossible, by a variant of the high-girth high chromatic number graph construction.

\paragraph{Moments}
A coarser, and in a sense more instructive, way to measure the complexity of a problem $\Pi$ is to ask what moments of $R_\fA$ could be finite when $\fA$ is a uniform local algorithm that solves $\Pi$.
For example, the first moment finite corresponds to the fact that the average time until an algorithm finishes is defined (for finite graphs this is independent of the size of the graph). 
In particular, we investigate the question of what is the least infinite moment for any uniform local algorithm to solve a given problem $\Pi$, as this language was used in \cite{HolroydSchrammWilson2017FinitaryColoring}.
To make the notation simpler, we only consider LCLs without inputs.
Recall that $R_\fA$ has a \emph{finite $\alpha$-moment}, where $\alpha\in [0,+\infty)$, if
$$\E\left(\left(R_\fA\right)^\alpha\right)=\sum_{r \in \mathbb{N}} r^\alpha \cdot \P(R_\fA = r) <+\infty.$$

\begin{definition}
Let $\Pi$ be an LCL.
We define ${\bf M}(\Pi)\in [0,+\infty]$ to be the supremum over all $\alpha\in [0,+\infty)$ such that there is a uniform $\fA$ that solves $\Pi$ and has the $\alpha$-moment of $R_\fA$ finite.
\end{definition}

The $\alpha$ moment being finite roughly corresponds to $O(\sqrt[\alpha]{1/\eps})$ uniform local complexity. In fact, we now recall the following fact.

\begin{fact}
\label{fact:moment_implies_tail}
Suppose that the coding radius of a uniform local algorithm $\fA$ has the $\alpha$-moment finite.
Then 
$$\P(R_\fA > \sqrt[\alpha]{1/\eps}) = O(\eps).$$
\end{fact}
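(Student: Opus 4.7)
The plan is to derive this bound as a direct application of Markov's inequality to the non-negative random variable $R_\fA^\alpha$. Since by assumption $M := \mathbb{E}(R_\fA^\alpha) < \infty$, Markov's inequality gives, for every $r > 0$,
\[
\P(R_\fA > r) \;=\; \P(R_\fA^\alpha > r^\alpha) \;\le\; \frac{\mathbb{E}(R_\fA^\alpha)}{r^\alpha} \;=\; \frac{M}{r^\alpha}.
\]
(Strictly speaking we are taking a supremum over admissible rooted input-labeled grids of the required radius, as in the convention set up before the statement; but the same Markov argument applies pointwise to each such graph's distribution, and $M$ is a uniform bound over them by hypothesis.)

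Now I would just plug in $t(\eps)$. Suppose $t(\eps) = O(\sqrt[\alpha]{1/\eps})$, so that there exists a constant $K > 0$ with $t(\eps) \le K \eps^{-1/\alpha}$ for all sufficiently small $\eps$. Since $x \mapsto \P(R_\fA > x)$ is non-increasing, we get
\[
\P(R_\fA > t(\eps)) \;\le\; \P\bigl(R_\fA > K \eps^{-1/\alpha}\bigr) \;\le\; \frac{M}{K^\alpha \eps^{-1}} \;=\; \frac{M}{K^\alpha}\,\eps \;=\; O(\eps),
\]
which is the claimed bound. The constant hidden in the $O(\eps)$ depends on $\alpha$, on $M$, and on the implicit constant $K$ in the definition of $t(\eps)$, but not on $\eps$.

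There is really no main obstacle here beyond making sure the inequalities go in the right direction and that the monotonicity step is valid; the statement is essentially Markov's inequality rephrased. The mention of ``for every $C > 0$'' in the statement appears to be a slight redundancy, since any constant factor is already absorbed into the $O(\cdot)$ notation in $t(\eps)$; the proof above handles arbitrary such constants uniformly.
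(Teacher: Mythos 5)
Your overall route---Markov's inequality applied to the nonnegative variable $R_\fA^\alpha$---is sound and is the more standard way to obtain an $O(\eps)$ tail bound. It does differ from the paper's argument, which proceeds by contraposition: assuming $\P(R_\fA > r_i) > C r_i^{-\alpha}$ along an infinite sequence, one passes to a subsequence with geometrically decreasing tails and shows $\mathbb{E}(R_\fA^\alpha) \ge \sum_i r_i^\alpha\bigl(\P(R_\fA>r_i)-\P(R_\fA>r_{i+1})\bigr) = +\infty$. That version yields the strictly stronger conclusion $\P(R_\fA>r)=o(1/r^\alpha)$, which is what the quantifier ``for every $C>0$'' in the statement is recording; your remark that this clause is redundant is accurate only for the weaker $O(\eps)$ formulation that Markov delivers.

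There is, however, a concrete error in your final step. From $t(\eps) \le K\eps^{-1/\alpha}$ and the fact that $x \mapsto \P(R_\fA > x)$ is non-increasing, you may only conclude $\P(R_\fA > t(\eps)) \ge \P\bigl(R_\fA > K\eps^{-1/\alpha}\bigr)$; the inequality you wrote points the wrong way, since an upper bound on $t(\eps)$ makes the event $\{R_\fA > t(\eps)\}$ larger, not smaller. Read literally, the claim would then fail for $t(\eps)\equiv 1$, which is certainly $O(\sqrt[\alpha]{1/\eps})$. What the argument actually needs is a lower bound $t(\eps) \ge c\,\eps^{-1/\alpha}$ for some $c>0$ (i.e., the reading $t(\eps)=\Theta(\sqrt[\alpha]{1/\eps})$, or simply the specific choice $t(\eps)=C\sqrt[\alpha]{1/\eps}$), after which monotonicity and Markov give $\P(R_\fA > t(\eps)) \le M c^{-\alpha}\eps = O(\eps)$ exactly as you intend. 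The statement's own use of $O(\cdot)$ is admittedly loose here, but your written chain of inequalities does not go through as stated and should be repaired by replacing the upper bound on $t(\eps)$ with a lower bound.
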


\subsection{Subshifts of finite type}
\label{subsec:subshifts}

Here we connect our definition of LCLs and uniform local algorithms with the well-studied notions of subshifts of finite type and finitary factors of iid labelings.

To define subshifts of finite type we restrict ourselves to LCLs without input.
That is $\Sigma_{in}$ is empty or contains only one element.
Moreover, for convenience, we set $b:=\Sigma_{out}$.
Then an LCL $\Pi$ is of the form $\Pi=(b,t,\fP)$.
We define $X_\Pi\subseteq b^{\mathbb{Z}^d}$ to be the space of all $\Pi$-colorings.
That is we consider all possible labelings of $\mathbb{Z}^d$ by elements from $b$ and then include in $X_{\Pi}$ only those that satisfy the finite constraints $\fP$. 
Viewing $\mathbb{Z}^d$ as a group, there is a natural shift action of $\mathbb{Z}^d$ on (the Cayley graph) $\mathbb{Z}^d$.
Namely, $g\in \mathbb{Z}^d$ shifts $h$ to $g+h$.
This action naturally ascends to an action $\cdot $ on $b^{\mathbb{Z}^d}$ as
$$(g\cdot x)(h)=x(g+h)$$
where $x\in b^{\mathbb{Z}^d}$.
It is not hard to see (since $\fP$ is defined on isomorphism types and not rooted in any particular vertex) that $X_\Pi$ is invariant under the shift map.
Then we say that $X_\Pi$ together with the shift structure is a \emph{subshift of finite type}.

A \emph{finitary factor of iid process (ffiid) $F$} is a function that takes as an input $\mathbb{Z}^d$ labeled with real numbers from $[0,1]$, independently according to Lebesgue measure, and outputs a different labeling of $\mathbb{Z}^d$, say with elements of some set $b$. 
This labeling satisfies the following: (a) $F$ is equivariant, that is for every $g\in \mathbb{Z}^d$ shifting the real labels with $g$ and applying $F$ is the same as applying $F$ and then shifting with $g$, (b) the value of $F(x)({\bf 0})$ depends almost surely on  the real labels on some random but finite neighborhood of ${\bf 0}$.
Similarly as for uniform local algorithms, one defines the random variable $R_F$ that encodes minimal such neighborhood.
We say that ffiid $F$ solves $\Pi$ if $F(x)\in X_{\Pi}$ with probability $1$.
We write $\ffiid$ for the class of all such problems.

The difference between ffiid and uniform local algorithms is just formal.
Namely, given a uniform local algorithm $\fA$, we can turn it into an ffiid $F_\fA$ simply by running it simultaneously in every vertex of $\mathbb{Z}^d$.
With probability $1$ this produces a labeling of $\mathbb{Z}^d$ and it is easy to see that both conditions from the definition of ffiid are satisfied.
On the other hand, if $F$ is ffiid, then we collect the finite neighborhoods that determine the value at ${\bf 0}$ together with the decision about the label.
This defines a pair $\fA_F=(\fC,f)$ that satisfies the definition of a uniform local algorithm.
It follows that the coding radius $R_\fA$, resp $R_F$, is the same after the translations.
We chose to work with the language of distributed algorithms as we borrow the techniques mainly from that area. 

\begin{remark}
If in (b), in the definition of ffiid, we only demand that the value at ${\bf 0}$ depends measurably on the labeling, i.e., $F$ is allowed to see the labels at the whole grid, then we say that $F$ is a \emph{factor of iid labels (fiid)}.

Given an LCL $\Pi$, we say that $\Pi$ is in the class $\FIID$, if there is a fiid $F$ that produces an element of $X_\Pi$ almost surely.
It is easy to see that $\ffiid \subseteq \FIID$.
We remark that in the setting without inputs, we have $\ffiid = \FIID$ on $\mathbb{Z}$ and it is unknown, however, whether this holds for $d>1$.
For general LCLs with inputs (where the definition of $\FIID$ is extended appropriately), we have $\ffiid \not=\FIID$ even for $d=1$, see \cite{grebik_rozhon2021LCL_on_paths}.
\end{remark}

\section{Speed-up Theorems for $\olocal$}
\label{sec:speedup_theorems}

Recall the classification \cref{thm:classification_of_local_problems_on_grids}. We are interested in building the analogous classification for uniform local complexities. 
In this section, we first show that already our basic translations from \cref{subsec:preliminaries_tail} give several interesting results and in \cref{subsec:tail_d-1_speedup} we prove a nontrivial speedup result.

\subsection{Warm-up Results}
\label{subsec:speedup_warmup}

First, we observe that already \cref{thm:classification_of_local_problems_on_grids}, together with \cref{lem:tail->local} itself has the following implications: 

\begin{corollary}
\label{cor:block_factor}
$\local(O(1)) = \olocal(O(1))$. 
\end{corollary}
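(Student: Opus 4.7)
The plan is to establish the two inclusions separately, with the easy direction coming directly from the classification remark and the nontrivial direction relying on the classification theorem and the tail-to-local translation.

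For the inclusion $\local(O(1)) \subseteq \olocal(O(1))$, I would appeal to \cref{rem:chang_pettie_decomposition}, which states that a constant-time deterministic algorithm in case (1) of \cref{thm:classification_of_local_problems_on_grids} may be chosen so as not to read the unique identifiers. Such an algorithm is a function of the $O(1)$-radius neighborhood alone; interpreting this neighborhood as also carrying iid labels that are simply ignored, we obtain a uniform local algorithm whose coding radius is bounded by a fixed constant almost surely. In particular its uniform complexity is $O(1)$, witnessing $\Pi \in \olocal(O(1))$.

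For the reverse inclusion $\olocal(O(1)) \subseteq \local(O(1))$, let $\fA$ be a uniform local algorithm solving $\Pi$ whose uniform complexity function $t(\eps)$ is bounded by some constant $C$ for all $\eps > 0$. By the defining inequality $\P(R_\fA > t(\eps)) \le \eps$, letting $\eps \to 0$ gives $\P(R_\fA > C) = 0$, so the coding radius is at most $C$ almost surely. Running $\fA$ as a randomized local algorithm on an $n$-vertex torus (using the node's random string in place of the iid label) then always succeeds, so $\Pi \in \rlocal(C) \subseteq \rlocal(O(1))$; this is essentially the statement of \cref{lem:tail->local} specialized to the constant regime.

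It remains to upgrade randomized $O(1)$ to deterministic $O(1)$. Since $O(1) \subseteq o(\log^* n)$ and $o(\log^* n) \subseteq o(n^{1/d})$, the trichotomy in \cref{thm:classification_of_local_problems_on_grids} rules out cases (2) and (3) for $\Pi$: neither the $\Theta(\log^* n)$ lower bound nor the $\Omega(n^{1/d})$ lower bound is compatible with membership in $\rlocal(O(1))$. Therefore $\Pi$ must fall into case (1), i.e.\ $\Pi \in \local(O(1))$.

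The only step with any subtlety is verifying that $t(\eps) = O(1)$ really forces the coding radius to be bounded almost surely (not just in probability); once this is observed, everything else is a direct invocation of the classification theorem and of the remarks already stated in the preliminaries, so there is no substantive obstacle.
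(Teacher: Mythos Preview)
Your proof is correct and follows essentially the same approach as the paper, which simply cites \cref{thm:classification_of_local_problems_on_grids} and \cref{rem:chang_pettie_decomposition}; you have merely spelled out the details of how those two ingredients combine.
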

\begin{proof}
This follows from \cref{thm:classification_of_local_problems_on_grids,rem:chang_pettie_decomposition}. 
\end{proof}

\begin{corollary}
\label{cor:BelowTOWERtrivial}
If $\Pi \in \olocal(o(\log^* 1/\eps))$, then $\Pi \in \local(O(1))$. In particular if $\Sigma_{in} = \emptyset$ then $\Pi$ is trivial. 
\end{corollary}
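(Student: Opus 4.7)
The plan is to reduce to the classification of $\local$ complexities via the translation in \cref{lem:tail->local}. First I would take an uniform local algorithm $\fA$ solving $\Pi$ with uniform complexity $t(\eps) = o(\log^* 1/\eps)$ and apply \cref{lem:tail->local} to obtain a randomized local algorithm with randomized local complexity $t(1/n^{O(1)})$. Since $\log^*(n^c) = \log^* n + O(1)$ for any constant $c$, this substitution preserves the $o$-bound: $t(1/n^{O(1)}) = o(\log^* n)$.

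Next I would invoke \cref{thm:classification_of_local_problems_on_grids}: the only possible randomized local complexities of LCLs on $\mathbb{Z}^d$ are $O(1)$, $\Theta(\log^* n)$, or $\Omega(n^{1/d})$. A complexity of $o(\log^* n)$ lies strictly below the $\Theta(\log^* n)$ regime and is incompatible with the $\Omega(n^{1/d})$ regime, so the only remaining option is $\Pi \in \rlocal(O(1))$. By the same classification (and the equivalence of deterministic and randomized constant-round solvability on grids recorded in \cref{rem:chang_pettie_decomposition}), we upgrade this to $\Pi \in \local(O(1))$, giving the first claim.

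For the ``in particular'' clause, once we know $\Pi \in \local(O(1))$ and the input alphabet $\Sigma_{in}$ is empty, part (1) of \cref{thm:classification_of_local_problems_on_grids} asserts directly that $\Pi$ is trivial, i.e., a single constant output label is a valid $\Pi$-coloring of $\mathbb{Z}^d$.

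The main point to be careful about is purely the asymptotics: I need to confirm that $o(\log^* 1/\eps)$ composed with $\eps = 1/n^{O(1)}$ genuinely produces an $o(\log^* n)$ bound (not merely $O(\log^* n)$), which is what lets me rule out the middle case $\Theta(\log^* n)$ of the classification theorem. This is immediate from the fact that $\log^*$ is invariant up to additive constants under polynomial changes of its argument, so composing a sublinear multiple of $\log^* (\cdot)$ with a polynomial gives again a sublinear multiple of $\log^* n$. No other step involves calculation beyond quoting the cited results.
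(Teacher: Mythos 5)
Your proposal is correct and follows essentially the same route as the paper: apply \cref{lem:tail->local} to convert the uniform complexity $o(\log^* 1/\eps)$ into randomized local complexity $o(\log^* n)$, then invoke the classification \cref{thm:classification_of_local_problems_on_grids} to land in the $O(1)$ regime and read off triviality when $\Sigma_{in}=\emptyset$. Your extra care about $\log^*$ being stable under polynomial reparametrization of $\eps$ is exactly the (implicit) point the paper's one-line proof relies on.
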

\begin{proof}
\cref{lem:tail->local} gives that $\Pi \in\rlocal(o(\log^* n))$ and \cref{thm:classification_of_local_problems_on_grids} then implies the rest. 
\end{proof}

In particular, Corollary~\ref{cor:BelowTOWERtrivial} answers in negative Problem~(ii) in \cite{HolroydSchrammWilson2017FinitaryColoring} that asked about the existence of a nontrivial local problem (without input labels) with sub $\log^*$ uniform local complexity (in their notation, an ffiid process with a super-tower function tail decay). 
The second corollary is the following:

\begin{corollary}
\label{cor:local=tower}
$\Pi \in \olocal(O(\log^* 1/\eps))$ if and only if $\Pi \in \local(O(\log^* n))$. 
\end{corollary}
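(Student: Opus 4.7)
The plan is to prove the two inclusions separately. For the direction $\olocal(O(\log^* 1/\eps)) \subseteq \local(O(\log^* n))$, I would plug $\eps = 1/n^{C}$ for a suitable constant $C$ into the uniform tail bound. By \cref{lem:tail->local}, this shows that any $\Pi \in \olocal(O(\log^* 1/\eps))$ has randomized local complexity $O(\log^* n^{C}) = O(\log^* n)$, hence $\Pi \in \rlocal(O(\log^* n))$. By the trichotomy in \cref{thm:classification_of_local_problems_on_grids}, a randomized complexity of $o(n^{1/d})$ forces the deterministic complexity to be $O(\log^* n)$ (or $O(1)$, which is also $O(\log^* n)$), finishing this direction.

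For the direction $\local(O(\log^* n)) \subseteq \olocal(O(\log^* 1/\eps))$, I would invoke \cref{rem:chang_pettie_decomposition}: any $\Pi \in \local(O(\log^* n))$ can be solved by first computing an MIS of a sufficiently large constant-power $G^s$ of the input grid, and then applying an $O(1)$-round local post-processing rule on top. The post-processing is trivially an uniform local algorithm of constant coding radius, so the task reduces to producing an MIS of $G^s$ via an uniform local algorithm with $O(\log^* 1/\eps)$ uniform complexity. Since $G^s$ still has bounded degree, this is exactly the content of the classical finitary coloring result of Holroyd, Schramm and Wilson \cite{HolroydSchrammWilson2017FinitaryColoring}: a proper $(\Delta_s + 1)$-coloring of $G^s$ (from which an MIS can be extracted by a constant-radius local rule) admits an ffiid with tower tail decay, i.e., $\P(R_\fA > r) \le 1/\tow(\Omega(r))$, which translates to $O(\log^* 1/\eps)$ uniform complexity.

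Finally, I would stitch the two ingredients together: running the Holroyd--Schramm--Wilson ffiid for the power-graph MIS and then applying the constant-radius post-processing of \cref{rem:chang_pettie_decomposition} yields an uniform local algorithm for $\Pi$ whose coding radius is bounded by the MIS coding radius plus $O(1)$, hence still $O(\log^* 1/\eps)$ in uniform complexity. The main obstacle is really just having a clean statement of the Holroyd--Schramm--Wilson tower-tail coloring result at the right level of generality (any bounded-degree graph, hence any $G^s$); once that is in hand, both directions combine to give the claimed equality.
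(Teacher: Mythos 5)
Your proposal is correct and follows essentially the same route as the paper: \cref{lem:tail->local} plus the classification theorem for the forward direction, and \cref{rem:chang_pettie_decomposition} reducing to a power-graph MIS solved by the tower-tail Linial/Holroyd--Schramm--Wilson construction for the converse. The paper additionally notes the MIS step can alternatively be obtained from \cite{Korman_Sereni_Viennot2012Pruning_algorithms_+_uniform_coloring} together with \cref{thm:tail=uniform}, but this is just a second reference for the same ingredient.
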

\begin{proof}
If $\Pi \in \olocal(O(\log^* 1/\eps))$, \cref{lem:tail->local} yields $\Pi \in \rlocal(O(\log^* n))$. The other direction (special case of \cref{thm:tail=uniform}) follows from \cref{rem:chang_pettie_decomposition} that essentially states that it suffices to check that the independent set problem in the power graph can be solved with a uniform algorithm. But this follows from a version of Linial's algorithm from \cite{HolroydSchrammWilson2017FinitaryColoring} or from \cite{Korman_Sereni_Viennot2012Pruning_algorithms_+_oblivious_coloring} and \cref{thm:tail=uniform}.
\end{proof}

\subsection{Tight Grid Speedup for $\olocal$}
\label{subsec:tail_d-1_speedup}

In this section, we prove the speedup theorem for uniform local complexities. This is the first place where the classical randomized local model and the uniform one differ.

\begin{theorem}
\label{thm:grid_speedup}
Let $d,t\in \mathbb{N}, d \ge 2$.
Then there is $C_{d,t}>0$ such that the following is satisfied.
If an LCL $\Pi=(\Sigma_{in},\Sigma_{out},t,\fP)$ can be solved by a uniform local algorithm $\fA$ for which there is $\epsilon_0>0$ such that
$$\P(R_\fA>C_{d,t} \sqrt[d-1]{1/\eps})<\epsilon$$
holds for every $0 < \eps \le \eps_{0}$, then $\Pi\in  \olocal(O(\log^* 1/\eps))$. 
Analogously, for $d=1$ we have $\ffiid = \olocal(O(\log^* 1/\eps))$. 
\end{theorem}

The proof is a simple adaptation of Lovász local lemma speedup results in \cite{chang2017time_hierarchy,fischer2017sublogarithmic}. 
It goes as follows. Split the grid into boxes of side length roughly $r_0$ for some constant $r_0$. This can be done with $O(\log^* 1/\eps)$ uniform local complexity. 

Next, consider a box $B$ of side length $r_0$ and the volume $r_0^d$. In classical speedup argument, we would like to argue that we can deterministically set the randomness in nodes of $B$ and its immediate neighborhood so that we can simulate the original uniform local algorithm $\fA$ with that randomness and it finishes in at most $r_0$ steps. 
In fact, ``all nodes of $B$ finish after at most $r_0$ steps'' is an event that happens with constant probability if the uniform complexity of $\fA$ is $\sqrt[d]{1/\eps}$, since $r_0 = \sqrt[d]{1/\eps}$ implies $\eps = 1/r_0^d$ and there are $r_0^d$ nodes in $B$. 

However, we observe that it is enough to consider only the boundary of $B$ with $O(r_0^{d-1})$ nodes and argue as above only for the boundary nodes, hence the speedup from the uniform complexity $\sqrt[d-1]{1/\eps}$. 
The reason for this is that although the coding radius of $\fA$ inside $B$ may be very large, just the fact that there \emph{exists} a way of filling in the solution to the inside of $B$, given the solution on its boundary, is enough for our purposes. The new algorithm simply simulates $\fA$ only on the boundaries and then fills in the rest. 

\begin{proof}
We will prove the result for $d \ge 2$, the case $d=1$ is completely analogous and, in fact, simpler. 
Set $C=C_{d,t}:=1 / \left( 4d\cdot t\cdot(2\Delta)^{2\Delta}\right)$, where $\Delta=20^d$.
Suppose that $\fA$ is as above and set $r_0=C\sqrt[d-1]{1/\eps}\in \mathbb{N}$ for some $0<\epsilon\le \epsilon_0$.
Then we have
\begin{equation}\label{eq:tail}
\P(R > r_0)
<\epsilon= \frac{C^{d-1}}{ (r_0)^{d-1}}\le \frac{1}{2d\cdot t\cdot(2\Delta)^{2\Delta}(2r_0)^{d-1}}\le \frac{1}{2d\cdot t\cdot(2\Delta)^{2\Delta} (r_0+1)^{d-1}}.
\end{equation}
Write $\Box_{r_0}$ for the local coloring problem of tiling the input graph (which is either a large enough torus or an infinite grid $\mathbb{Z}^d$) into boxes with length sizes in the set $\{r_0,r_0+1\}$ such that, additionally, each box $B$ is colored with color from $[\Delta+1]$ which is unique among all boxes $B'$ with distance at most $3(r_0+t)$. 

The problem of tiling with boxes of side lengths in $\{r_0, r_0+1\}$ can be solved in $O(\log^* n)$ local complexity \cite[Theorem~3.1]{GaoJackson}, we only sketch the reason here: Find a maximal independent set of a large enough power of the input graph $\mathbb{Z}^d$. Then, each node $u$ in the MIS collects a Voronoi cell of all non-MIS nodes such that $u$ is their closest MIS node. As the shape of the Voronoi cell of $u$ depends only on other MIS vertices in a constant distance from $u$, and their number can be bounded as a function of $d$ (see the ``volume'' argument given next), one can discretize each Voronoi cell into a number of large enough rectangular boxes. Finally, every rectangular box with all side lengths of length at least $r_0^2$ can be decomposed into rectangular boxes with all side lengths either $r_0$ or $r_0+1$. 

Note that in additional $O(\log^* n)$ local steps, the boxes can then be colored with $(\Delta+1)$-many colors in such a way that boxes with distance at most $3(r_0+t)$ must have different colors.
To see this, we need to argue that there are at most $\Delta$ many boxes at distance at most $3(r_0+t)$ from a given box $B$.
This holds because if $B,B'$ are two boxes and $d(B,B')\le 3(r_0+t)$, then $B'$ is contained in the cube centered around the center of $B$ of side length $10(r_0+t)$.
Then a volume argument shows that there are at most $\Delta\ge \frac{(10(r_0+t))^d}{r_0^d}$ many such boxes $B'$ (when $B$ is fixed).

We show now that there is a local algorithm $\mathcal{A'}$ of constant locality that solves $\Pi$ when given the input graph $G$ together with a solution of $\Box_{r_0}$ as an input.
This clearly implies that $\Pi$ is in the class $\LOCAL(O(\log^* n))$.

Let $\{B_i\}_{i\in I}$ be the input solution of $\Box_{r_0}$, together with the coloring $B_i\mapsto c(B_i)\in [\Delta+1]$. 
Write $V(B)$ for the vertices of $B$ and $\partial B$ for the vertices of $B$ that have a distance at most $t$ from the boundary of $B$.
We have $|\partial B| \le 2d\cdot t \cdot (r_0+1)^{d-1}$. 
Define $N(B)$ to be the set of rectangles of distance at most $(r_0+t)$ from $B$.

\begin{figure}
    \centering
    \includegraphics[width= .8\textwidth]{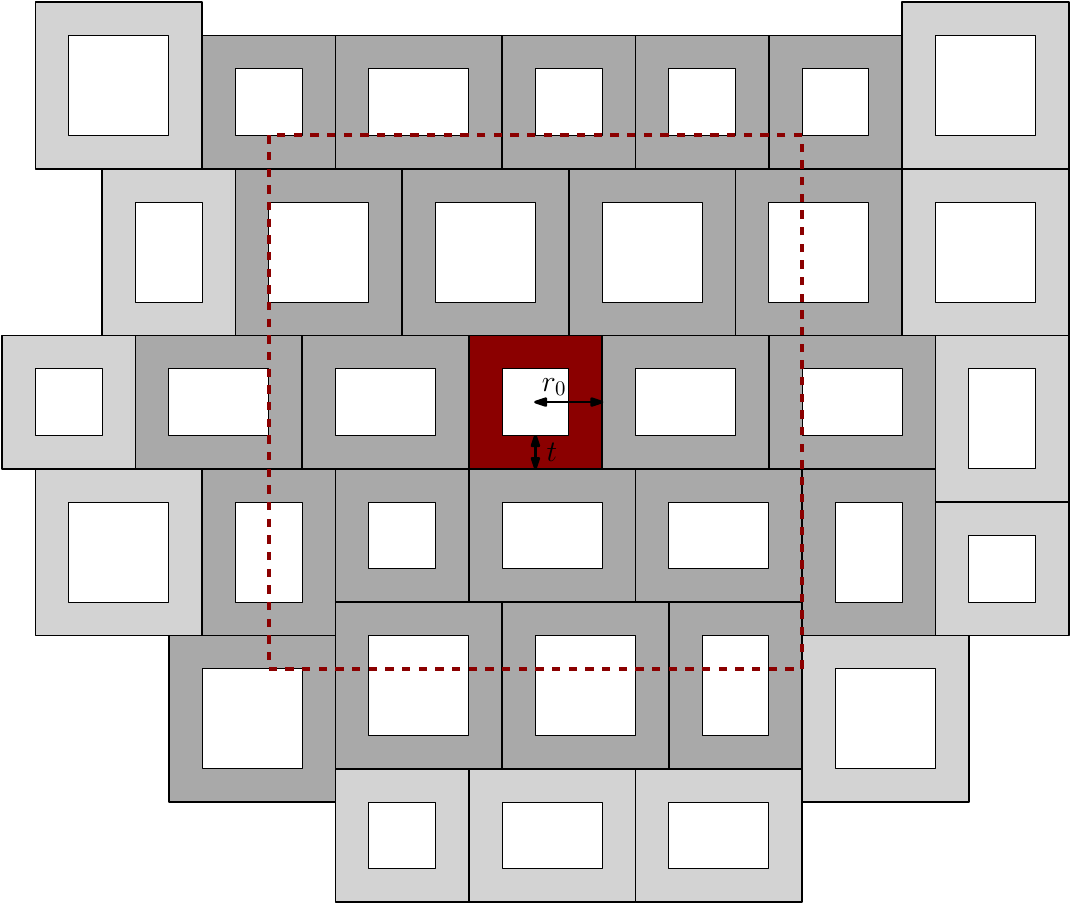}
    \caption{The picture shows one block $B$ with a red boundary $\partial B$ of length $t$. Boxes in $N(B)$, that is with distance at most $(r_0+t)$ from $B$ are dark grey. }
    \label{fig:tail_LLL}
\end{figure}

We are now going to describe how $\fA'$ iterates over the boxes in the order of their colors and for each node $u$ in the box $B$ it carefully fixes a random string $\mathfrak{r}(u)$.
This is done in such a way that after fixing the randomness in all nodes, we can apply the original algorithm $\fA$ to each vertex in $\partial B$ for each $B$ with the randomness given by $\mathfrak{r}$ and the local algorithm at $u \in \partial B$ will not need to see more than its $r_0$-hop neighborhood. 

The algorithm $\mathcal{A}'$ runs in $\Delta+1$ steps that are indexed by $1\le \ell\le \Delta+1 $, and in each step, we denote a $\mathfrak{r}_{\ell}$ the partial function that assigns randomness to all nodes in boxes of color at most $\ell$.
Given such a partial function $\mathfrak{r_\ell}:V(G)\to [0,1]$, every box $B$ can compute a conditional probability $\P(\Epsilon_{\ell}(B))$, where $\Epsilon_{\ell}(B)$ is the (bad) event that there is a vertex in $\partial B$ that needs to look further than $r_0$ to apply the uniform local algorithm $\fA$ given the randomness fixed by $\mathfrak{r}_{\ell}$.
Note that for computing $\P(\Epsilon_{\ell}(B))$ it is enough to know the $r_0$-neighborhood of $B$ and the restriction of $\mathfrak{r}_{\ell}$ to this neighborhood. 

Additionally, in the $0$-th step, we set $\mathfrak{r}_0$ to be the empty function.
Note that the probability that a vertex needs to look further than $r_0$ in the algorithm $\fA$ is smaller than $\frac{1}{2d\cdot t\cdot(2\Delta)^{2\Delta} (r_0+1)^{d-1}}$ by \eqref{eq:tail}.
The union bound gives $\P(\Epsilon_0(B))<\frac{1}{(2\Delta)^{2\Delta}}$ for every $B$.

In each step $\ell\in [\Delta+1]$ we define $\mathfrak{r}_B:V(B)\to [0,1]$ for every $B$ such that $c(B)=\ell$ and put $\mathfrak{r}_\ell$ to be the union of all $\mathfrak{r}_D$ that have been defined so far, i.e., for $D$ such that $c(D)\le \ell$. 
We make sure that the following is satisfied for every $B$:
\begin{enumerate}
	\item [(a)] the assignment $\mathfrak{r}_B$ depends only on $2(r_0+t)$-neighborhood of $B$ (together with previously defined $\mathfrak{r}_{\ell-1}$ on that neighborhood) whenever $c(B)=\ell$,
	\item [(b)] $\P(\Epsilon_\ell(B))<\frac{1}{(2\Delta)^{2\Delta-\ell}}$,
	\item [(c)] restriction of $\mathfrak{r}_\ell$ to $N(B)$ can be extend to the whole $\mathbb{Z}^d$ such that $\fA$ is defined at every vertex, where we view $N(B)$ embedded into $\mathbb{Z}^d$.
\end{enumerate}
We show that this gives the required algorithm.
It is easy to see that by (a) this defines a local construction that runs in constantly many steps and produces a function $\mathfrak{r}:=\mathfrak{r}_{\Delta+1}:V(G)\to [0,1]$.
By (b) every vertex in $\partial B$ for some $B$ looks into its $r_0$ neighborhood and uses $\fA$ to choose its color according to $\mathfrak{r}$.
Finally, by (c) it is possible to assign colors in $B\setminus \partial B$ for every $B$ to satisfy the condition of $\Pi$ within each $B$.
Note that as the thickness of the boundary is more than $t$, this depends only on the output of $\fA$ on the boundary.
Picking a minimal such solution for each $B\setminus \partial B$ finishes the description of the algorithm.

We show how to proceed from $0\le \ell<\Delta+1$ to $\ell+1$.
Suppose that $\mathfrak{r}_\ell$ satisfies the conditions above.
Pick $B$ such that $c(B)=\ell+1$.
Note that $\mathfrak{r}_B$ that we want to find can influence rectangles $D\in N(B)$ and these are influenced by the function $\mathfrak{r}_\ell$ at distance at most $r_0$ from them.
Also given any $D$ there is at most one $B\in N(D)$ such that $c(B)=\ell+1$. 
We make the decision $\mathfrak{r}_B$ based on $2r_0$ neighborhood around $B$ and the restriction of $\mathfrak{r}_\ell$ there, this guarantees that condition (a) is satisfied.

Fix $D\in N(B)$ and pick $\mathfrak{r}_B$ uniformly at random.
Write $\mathfrak{S}_{B,D}$ for the random variable that computes the conditional probability of $\Epsilon_{\mathfrak{r}_B\cup \mathfrak{r}_\ell}(D)$.
We have $\E(\mathfrak{S}_{B,D})=\P(\Epsilon_\ell(D))<\frac{1}{(2\Delta)^{2\Delta-\ell}}$.
By Markov inequality we have
$$\P\left(\mathfrak{S}_{B,D}\ge 2\Delta \frac{1}{(2\Delta)^{2\Delta-\ell}}\right)<\frac{1}{2\Delta}.$$
Since $|N(B)|\le \Delta$ we have that with non-zero probability $\mathfrak{r}_B$ satisfies $\mathfrak{S}_{B,D}<\frac{1}{(2\Delta)^{2\Delta-\ell-1}}$ for every $D\in N(B)$.
Since $\fA$ is defined almost everywhere we find $\mathfrak{r}_B$ that satisfies (b) and (c) above.
This finishes the proof. 
\end{proof}

As we already mentioned, the proof is in fact an instance of the famous Lovász Local Lemma problem. This problem is extensively studied in the distributed computing literature \cite{chang2016exp_separation, fischer2017sublogarithmic, brandt_maus_uitto2019tightLLL, brandt_grunau_rozhon2020tightLLL} as well as descriptive combinatorics \cite{KunLLL,OLegLLL,MarksLLL,Bernshteyn2021LLL}.
Our usage corresponds to a simple LLL algorithm for the so-called exponential criteria setup (see \cite{fischer2017sublogarithmic} and \cite{Bernshteyn2021local=cont}).

\section{$\toast$: General Results}
\label{sec:Toast}

In this section, we introduce the toast construction. This construction is very standard in descriptive combinatorics \cite{GaoJackson,FirstToast,GJKS,Circle} and the theory of random processes (there it is often called \emph{the hierarchy}) \cite{HolroydSchrammWilson2017FinitaryColoring,ArankaFeriLaci,Spinka}.

While the construction itself is well-known, in this paper we study it as a class of problems by itself. An analogy to our work is perhaps the paper of \cite{ghaffari_harris_kuhn2018derandomizing} that defines the class $\slocal$ and shows how to derandomize it. 

Informally, the basic toast construction works as follows. There is an adversary that has access to the (infinite) input grid $\mathbb{Z}^d$. The adversary is sequentially choosing some connected pieces of the grid and gives them to our algorithm $\fA$. The algorithm $\fA$ needs to label all vertices in the piece $D$ once it is given $D$. 
Note that $\fA$ does not know where exactly $D$ lies in $\mathbb{Z}^d$ and its decision cannot depend on the precise position of $D$ in the input graph. In general, the piece $D$ can be a superset of some smaller pieces that were already colored by $\fA$. In this case, $\fA$ needs to extend their coloring to the whole $D$. This game is played for a countable number of steps until the whole grid is colored. The algorithm solves $\Pi$ if it never happens during the procedure that it cannot extend the labeling to $D$ in a valid manner.

The construction sketched above is what we call a spontaneous deterministic toast or just $\toast$ algorithm. 
In the next subsection, we give a formal definition of $\toast$ algorithms and we follow by proving relations between $\toast$ algorithms and $\olocal$ complexities. 

\subsection{Formal Definition of $\toast$ classes}
\label{subsec:toast_definition}
We now give a formal definition of our four $\toast$ classes. First, we define the hereditary structure on which the $\toast$ algorithm is interpreted. This is also called a \emph{toast} (we use lowercase letters) and is parameterized by the minimum distance of boundaries of different pieces of the toast.  
This structure also corresponds to the adversary that asks our algorithm to extend a partial solution to $\Pi$ to bigger and bigger pieces used in the informal introduction. 

\begin{definition}[$q$-toast on $\mathbb{Z}^d$]
For $q >0$, a \emph{$q$-toast on $\mathbb{Z}^d$} is a collection $\mathcal{D}$ of finite connected subsets of $\mathbb{Z}^d$ that we call \emph{pieces} such that
\begin{itemize}
    \item for every $g,h\in \mathbb{Z}^d$ there is a piece $D\in\mathcal{D}$ such that $g,h\in D$,
    \item if $D,E\in \mathcal{D}$ and $d(\partial D,\partial E)\le q$, then $D=E$, where $\partial D$ is the boundary of a set $D$.
\end{itemize}
\end{definition}

Given a $q$-toast $\mathcal{D}$, the \emph{rank} of its piece $D \in \fD$ is the level in the toast hierarchy that $D$ belongs to in $\fD$. Formally, we define a \emph{rank function $\mathfrak{r}_\fD$ for the toast $\mathcal{D}$} as $\mathfrak{r}_\mathcal{D}:\mathcal{D}\to \mathbb{N}$, inductively as
$$\mathfrak{r}_\mathcal{D}(D)=\max\{\mathfrak{r}_{\mathcal{D}}(E)+1:E\subsetneq D, \ E\in \mathcal{D}\}$$ 
for every $D\in \mathcal{D}$.

Next, $\toast$ algorithms are described in the following language of extending functions. 

\begin{definition}[$\toast$ algorithm]
\label{def:extending_function}
We say that a function $\fA$ is a \emph{$\toast$ algorithm} for an LCL $\Pi = (\Sigma_{in},\Sigma_{out}, t, \fP)$ if 
\begin{enumerate}
    \item [(a)] the domain of $\fA$ consists of pairs $(A, c)$, where $A$ is a finite connected $\Sigma_{in}$-labeled subset of $\mathbb{Z}^d$ and $c$ is a partial map from $A$ to $\Sigma_{out}$,
    \item [(b)] $\fA(A,c)$ is a partial coloring from $A$ to $\Sigma_{out}$ that extends $c$, i.e., $\fA(A,c)\upharpoonright \dom(c)=c$, and that depends only on the isomorphism type of $(A,c)$ (not on the position in $\mathbb{Z}^d$).  
\end{enumerate}
A \emph{randomized $\toast$ algorithm} is moreover given as an additional input labeling of $A$ with real numbers from $[0,1]$.  
Finally, if $\fA$ has the property that $\dom(\fA(A,c)) =A$ for every pair $(A,c)$, then we say that $\fA$ is a \emph{spontaneous (randomized) $\toast$ algorithm}.
\end{definition}

\begin{figure}
    \centering
    \includegraphics[width=\textwidth]{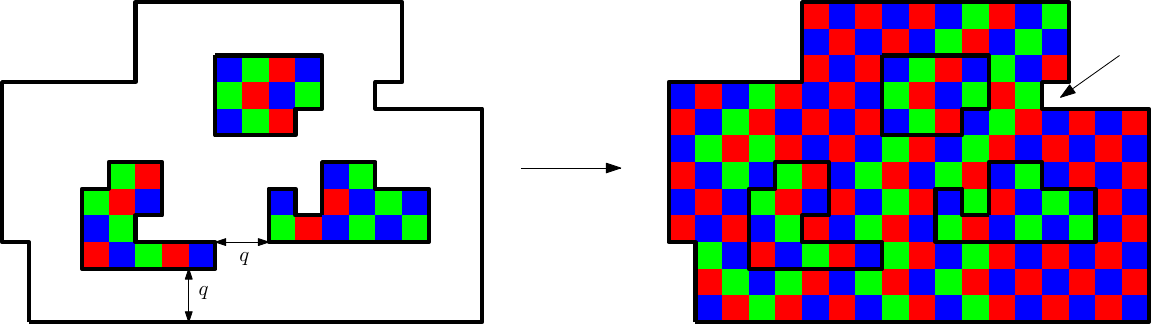}
    \caption{A $q$-toast and an example of an extending function for $3$-coloring. Observe that our extending function does not give a correct algorithm for $3$-coloring since the piece on the right is contained in an even bigger piece and the $3$-coloring constructed on the right cannot be extended to the node marked with an arrow}
    \label{fig:toast_example}
\end{figure}

It is easy to see that a $\toast$ algorithm $\fA$ can be used to color ($\Sigma_{in}$-labeled) $\mathbb{Z}^d$ inductively on the rank $\mathfrak{r}_\mathcal{D}$ of any $q$-toast $\mathcal{D}$ for every $q\in \mathbb{N}$. 
Namely, define $\fA(\mathcal{D})$ to be the partial coloring of $\mathbb{Z}^d$ that is the union of $\fA(D)$, where $D\in \mathcal{D}$ and $\fA(D)$ is defined inductively on $\mathfrak{r}_\mathcal{D}$.
First put $\fA(D):=\fA(D,\emptyset)$ whenever $\mathfrak{r}_\mathcal{D}(D)=0$.
Having defined $\fA(D)$ for every $D\in \mathcal{D}$ such that $\mathfrak{r}_\mathcal{D}(D)=k$, we define
$$\fA(D'):=\fA\left(D',\bigcup\{\fA(E):E\subsetneq D', \ E\in \mathcal{D}\}\right).$$
for every $D'\in \mathcal{D}$ such that $\mathfrak{r}_\mathcal{D}(D')=k+1$. That is, we let $\fA$ extend the partial solution to $\Pi$ to bigger and bigger pieces of the toast $\fD$. 
Observe that if $\fA$ is a spontaneous $\toast$ function, then $\dom(\fA(\mathcal{D}))=\mathbb{Z}^d$ for every $q$-toast $\mathcal{D}$ and every $q\in \mathbb{N}$.

\paragraph{$\toast$ algorithms}
We are now ready to define the four types of toast constructions that we are going to use (cf. \cref{fig:big_picture_grids}). 

\begin{definition}[$\ltoast$]
Let $\Pi$ be an LCL.
We say that $\Pi$ is in the class $\ltoast$ if there is $q>0$ and a $\toast$ algorithm $\fA$ such that $\fA(\mathcal{D})$ is a $\Pi$-coloring for every $q$-toast $\mathcal{D}$.

Moreover, we say that $\Pi$ is in the class $\toast$ if there is $q\in \mathbb{N}$ and a spontaneous $\toast$ algorithm $\fA$ such that $\fA(\mathcal{D})$ is a $\Pi$-coloring for every $q$-toast $\mathcal{D}$.
\end{definition}

Before we define a randomized version of these notions, we make several comments.
First, note that $\toast\subseteq \ltoast$.
Moreover, if we construct a ($q$-)toast using a uniform local algorithm, then we get instantly that every problem from the class $\toast$ is in the class $\olocal$, see Section~\ref{subsec:toast_construction_in_tail_sketch}.
Similar reasoning applies to the class $\borel$, see Section~\ref{sec:OtherClasses}.
We remark that we are not aware of any solvable LCL that is not in the class $\toast$.

\begin{definition}[$\lrtoast$]
Let $\Pi$ be an LCL.
We say that $\Pi$ is in the class $\lrtoast$ if there is $q>0$ and a randomized $\toast$ algorithm $\fA$ such that $\fA(\mathcal{D})$ is a $\Pi$-coloring for every $q$-toast $\mathcal{D}$ with probability $1$.
That is, after $\Sigma_{in}$-labeling is fixed, with probability $1$ an assignment of real numbers from $[0,1]$ to $\mathbb{Z}^d$ successfully produces a $\Pi$-coloring against all possible $q$-toasts $\mathcal{D}$ when $\fA$ is applied.

Moreover, we say that $\Pi$ is in the class $\rtoast$ if there is $q\in \mathbb{N}$ and a spontaneous randomized $\toast$ algorithm $\fA$ as above.
\end{definition}

\subsection{Uniform Local Construction of $\toast$}
\label{subsec:toast_construction_in_tail_sketch}

The class $\toast$ (or $\rtoast$) is especially useful since in \cref{sec:SpecificProblems} we discuss how many local problems that are not in class $O(\log^* n)$ but on the other hand are not ``clearly hard'' like vertex $2$-coloring are in the class $\toast$. Importantly, in \cref{subsec:toast_construction}  we show how one can construct the toast decomposition with a uniform local algorithm in $\olocal(\frac{1}{\eps} \cdot 2^{O(\sqrt{\log 1/\eps})})$ which yields the following theorem.

\begin{restatable}{theorem}{toastconstruction}
\label{thm:toast_in_tail}
Let $\Pi$ be an LCL that is in the class $\toast$.
Then \[
\Pi \in \olocal \left( \frac{1}{\eps} \cdot 2^{O(\sqrt{\log 1/\eps})} \right) \subseteq \olocal\left(  (1/\eps)^{1 + o(1)}  \right)
\]
and, hence, ${\bf M}(\Pi)\ge 1$.
\end{restatable}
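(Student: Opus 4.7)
The plan is to reduce the claim to the (not-yet-displayed) construction of a random $q$-toast by an uniform local algorithm with uniform complexity $\frac{1}{\eps}\cdot 2^{O(\sqrt{\log 1/\eps})}$, which will be established in \cref{subsec:toast_construction}. Concretely, I would first take this to be a black box: an uniform local algorithm $\fT$ that, given the random labels on $\mathbb{Z}^d$, outputs for every vertex $v$ the smallest piece $D_v$ of a $q$-toast $\fD$ containing $v$, with
\[
\P\bigl(\diam(D_v) > r\bigr)\;\le\; \frac{1}{r}\cdot 2^{O(\sqrt{\log r})}.
\]
Here $q$ is whatever separation constant the fixed spontaneous $\toast$ algorithm $\fA$ for $\Pi$ demands.

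Next, I would compose $\fT$ with $\fA$ to obtain an uniform local algorithm $\fA'$ solving $\Pi$. At a vertex $v$, algorithm $\fA'$ explores the neighborhood until $\fT$ identifies the smallest piece $D_v\in\fD$ containing $v$; because the toast is hereditary, every strictly smaller piece of $\fD$ meeting $D_v$ is contained in $D_v$, so once $D_v$ is known (together with the input labels it carries) one can recursively simulate $\fA$ on the finite sub-toast restricted to $D_v$, and in particular recover the output label at $v$. This is well-defined and shift-equivariant since $\fA$ depends only on isomorphism types of pairs $(A,c)$, and spontaneity of $\fA$ ensures that $v$ actually receives a label at this stage. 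The coding radius of $\fA'$ at $v$ is therefore at most $\diam(D_v)+O(1)$ (the constant accounting for the checking radius $t$ of $\Pi$ and for reading the $\Sigma_{in}$-labels inside $D_v$), so
\[
\P(R_{\fA'} > r)\;\le\;\P\bigl(\diam(D_v) > r-O(1)\bigr)\;\le\;\frac{1}{r}\cdot 2^{O(\sqrt{\log r})}.
\]
Inverting this tail bound in $\eps$ gives the uniform complexity $\frac{1}{\eps}\cdot 2^{O(\sqrt{\log 1/\eps})}$, and since $2^{O(\sqrt{\log 1/\eps})}=(1/\eps)^{o(1)}$, the second inclusion in the statement is immediate.

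Finally, the moment bound ${\bf M}(\Pi)\ge 1$ follows by a routine summation: for any $\alpha\in[0,1)$,
\[
\mathbb{E}\bigl(R_{\fA'}^{\alpha}\bigr)\;\le\;\sum_{r\ge 1} r^{\alpha}\cdot \P(R_{\fA'}=r)\;\lesssim\;\sum_{r\ge 1} r^{\alpha-2}\cdot 2^{O(\sqrt{\log r})}\;<\;\infty,
\]
since $r^{\alpha-2}\cdot 2^{O(\sqrt{\log r})}$ is summable whenever $\alpha<1$. The main obstacle is therefore not this wrap-up argument but the black box itself: proving that a $q$-toast can be produced by an uniform local algorithm with tail essentially $1/r$, which is the content of \cref{thm:toast_in_tail}'s dedicated construction. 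Heuristically, one expects to build the toast by iterating a randomized clustering procedure at geometrically growing scales $2^i$, ensuring $q$-separation of boundaries at each level and arguing that the probability a vertex is not yet trapped inside a fully constructed piece after scale $i$ decays roughly like $2^{-i}$; balancing the number of scales against the failure probability at each scale is what produces the $2^{O(\sqrt{\log 1/\eps})}$ overhead over the naive $1/\eps$ rate.
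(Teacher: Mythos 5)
Your reduction is exactly how the paper organizes the proof: it isolates a standalone statement (\cref{thm:toast moment}) asserting that a $q$-toast can be produced by an uniform local algorithm with $\P(\widetilde R > (1/\eps)\cdot 2^{O(\sqrt{\log 1/\eps})})<\eps$, where the coding radius is defined so that a vertex can describe the minimal piece $D_v$ containing it \emph{and}, within the same explored ball, all lower-rank pieces inside $D_v$ (you need the whole finite sub-toast, not just the set $D_v$, to run the spontaneous algorithm inductively --- your black box should be stated to deliver this, as the paper's does). \cref{thm:toast_in_tail} is then declared an immediate corollary by composing with the spontaneous $\toast$ algorithm, exactly as you do. One cosmetic slip in your wrap-up: the pointwise bound $\P(R_{\fA'}=r)\lesssim r^{-2}2^{O(\sqrt{\log r})}$ does not follow from the tail bound; you should pass through summation by parts, $\mathbb{E}(R^\alpha)\approx\sum_r r^{\alpha-1}\P(R\ge r)$, which yields the same convergent series.

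The substantive gap is that all of the content lives in the black box, and the heuristic you give for it would not deliver the stated bound. The per-level loss in the construction is not a fixed constant probability but a \emph{ratio of consecutive scales}: a constant fraction $\approx A\,r_{k-1}/r_k$ (with $A=5q100^d$) of the still-uncovered vertices fails to be swallowed at level $k$, and these losses multiply telescopically, so after $k$ levels the uncovered fraction is $\approx A^k/r_k$. With geometric scales $r_k=B^k$ this is $r_k^{-(1-\log_B A)}$, i.e.\ you only get $\olocal((1/\eps)^{1+c})$ for a constant $c>0$ depending on $q$ and $d$, not $(1/\eps)^{1+o(1)}$. The paper takes $r_k=2^{ak^2}$, so that $k=\Theta(\sqrt{\log r_k})$ and the accumulated loss $A^k$ is only $2^{O(\sqrt{\log r_k})}$; this superexponential scale growth is precisely where the $2^{O(\sqrt{\log 1/\eps})}$ overhead comes from. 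The remaining ingredients you would need to supply: the level-$k$ pieces are Voronoi cells of MIS's of $G^{r_k}$ (built with Ghaffari's uniform MIS algorithm, whose $O(\log\Delta+\log 1/\eps)$ uniform complexity keeps the cost of constructing all $k$ levels on a ball of radius $O(r_k)$ dominated by $O(r_k\log(r_k/\eps))$), shrunk by $t_k=O(\sum_{j<k}r_j)\ll r_k$ and then completed to unions of the lower-level Voronoi cells they meet --- this last step is what simultaneously guarantees the $q$-separation of boundaries across all levels and the laminar structure your composition relies on.
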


We observe in \cref{cor:toast_lower_bound} that faster than $O(1/\eps)$ uniform complexity is not possible. 

\paragraph{Proof Sketch}

Here we provide an intuition for the proof that is otherwise somewhat technical. We use that maximal independent sets of power graphs of the grid $G = \mathbb{Z}^d$ can be constructed effectively by the uniform distributed MIS algorithm of Ghaffari \cite{ghaffari2016MIS}. The slowdown incurred by constructing MISes can be neglected in this high-level overview.  

We take a sequence $r_k = 2^{ck^2}$ and construct sparser and sparser MIS in the power graphs $G^{r_k}$ for $k = 1, 2, \dots$.
Recall that edges in the power graph $G^\ell$ are pairs of distinct vertices of graph distance at most $\ell$ in the original graph $G$.
After constructing MIS of $G^{r_k}$, we construct Voronoi cells induced by this MIS. These Voronoi cells $V_1, V_2, \dots$ will be, roughly, the pieces of rank $k$, but we need to ensure that all the boundaries are at least $q$ hops apart. 

To do so, we first shrink each Voronoi cell by $t_k = O(\sum_{j < k} r_j) \ll r_k$ hops (grey boundary area in \cref{fig:toast_construction}) and get new, smaller, cells $C_1, C_2, \dots$. The final sequence of pieces $H_1, H_2, \dots$ is then defined inductively, namely to define $H_i$ we start with $C_1$, then add all Voronoi cells from $k-1$-th iteration that $C_1$ intersects, then add all Voronoi cells from $k_2$-th iteration that the new, bigger, cluster intersects, \dots
The definition of $t_k$ ensures that the final pieces constructed after all $k$ iterations are sufficiently far from each other, and also compatible with the smaller rank pieces. We always have $H_i \subseteq V_i$. 

It remains to argue that after $k$ steps of this process, only a small fraction of nodes remains uncovered. This is done roughly as follows. The vertices from $V_i \setminus C_i$ that are not covered by $C_i$s are roughly $t_i / r_i \approx A r_{i-1} / r_i$ fraction of vertices where $A$ is some (large) constant. 
If we now fix some step $k$, we can use this argument for all $1 \le i \le k$ to conclude that the fraction of remaining vertices after step $k$ is at most $\frac{A r_{k-1}}{r_{k}} \cdot \frac{A r_{k-2}}{r_{k-1}} \dots \frac{A }{r_1} = \frac{A^k}{r_k}$. 
To compute the uniform complexity, consider any $r$ such that $r_k \le r < r_{k+1}$. Above discussion tells us that we can write \[ \P(R \ge r) \le \frac{A^k}{r_k}.\] 
On the other hand, $r \le r_{k+1} = r_k \cdot 2^{O(k)}$, so we have \[ \P(R \ge r) \le \frac{A^k}{r / 2^{O(k)}} = \frac{2^{O(k)}}{r}.\] 
Finally, since $r \ge r_k = 2^{ck^2}$, we have $k = O(\sqrt{\log r})$ and thus
\[ \P(R \ge r) \le  \frac{2^{O(\sqrt{\log r})}}{r}.\] 
A substitution $\eps = \frac{2^{O(\sqrt{\log r})}}{r}$ then gives the inequality $\P(R \ge 2^{O(\sqrt{\log 1/\eps})}/\eps) < \eps$ that we wanted to prove.

\begin{figure}
    \centering
    \includegraphics[width = .95\textwidth]{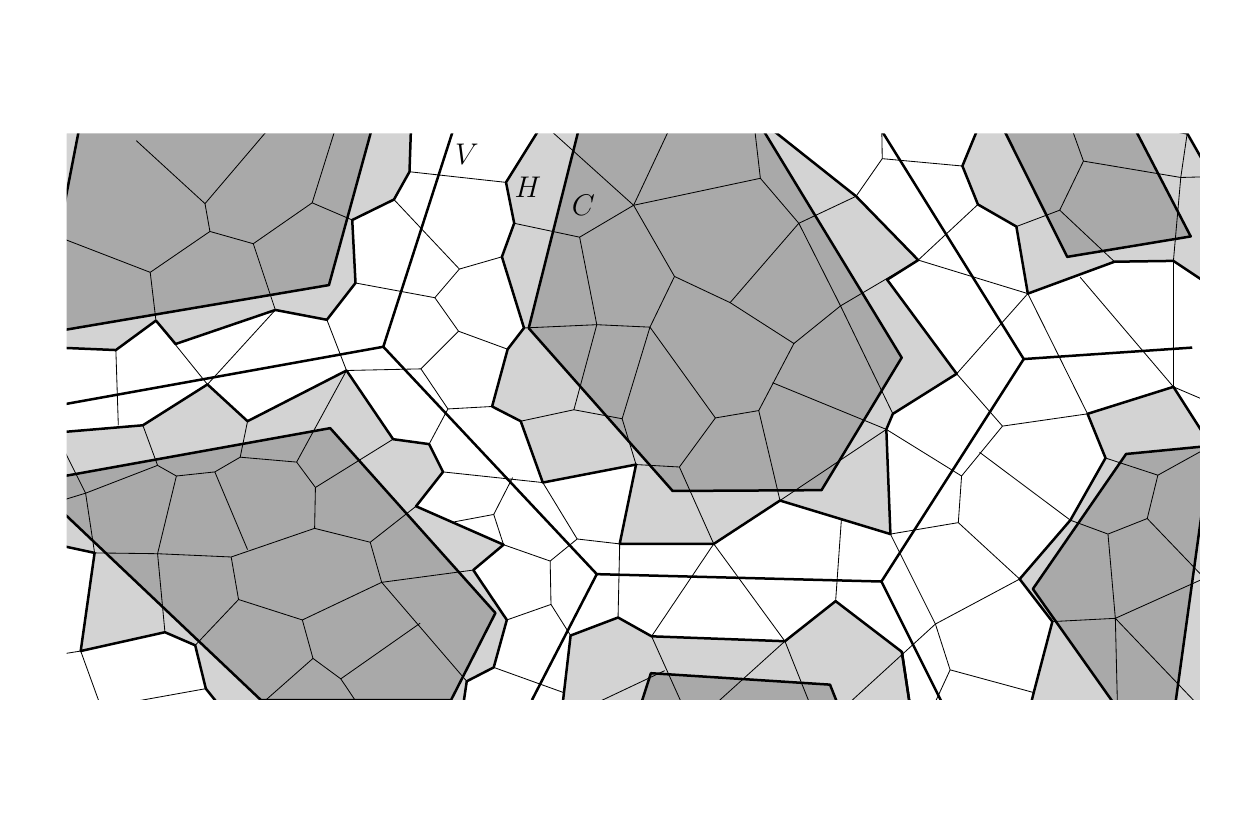}
    \caption{A step during the toast construction. Voronoi cell $V$ is first shrunk to $C$ and we get the final piece $H$ by taking the union of smaller-level Voronoi cells intersecting $C$ (in this picture, there is only one lower level but the construction is recursive in general). }
    \label{fig:toast_construction}
\end{figure}

\subsection{$\rtoast = \toast$}
\label{subsec:rtoast=toast}

In this section, we show that randomness does not help spontaneous toast algorithms.

\begin{restatable}{theorem}{toastrtoast}\label{thm:derandomOfToast}
Let $\Pi$ be an LCL.
Then $\Pi$ is in the class $\rtoast$ if and only if $\Pi$ is in the class $\toast$.
\end{restatable}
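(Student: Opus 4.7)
The direction $\toast \subseteq \rtoast$ is immediate: a deterministic spontaneous $\toast$-algorithm is trivially a randomized one that ignores its real-valued input labels. The content is in showing $\rtoast \subseteq \toast$.

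Given a randomized spontaneous toast algorithm $\fA$ of parameter $q$ that is correct a.s.\ against every $q$-toast, the plan is to derandomize as follows. For each isomorphism type of a pair $(A,c)$ (with $A$ a finite connected $\Sigma_{in}$-labeled subset of $\mathbb{Z}^d$ and $c$ a partial $\Sigma_{out}$-coloring of $A$), define
\[
O(A,c) \;=\; \bigl\{\, e : \mathbb{P}_{\ell|_A}\!\left[\fA(A,c,\ell|_A)=e\right]>0 \,\bigr\},
\]
where $\ell|_A$ is uniform on $[0,1]^A$. This set is nonempty and finite since $\fA$ always outputs some extension. Set $\fA'(A,c)$ to be the lex-smallest element of $O(A,c)$ under a fixed canonical ordering of extensions; this is a deterministic spontaneous toast algorithm of parameter $q$.

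To verify correctness, suppose for contradiction that $\fA'$ fails on some $q$-toast $\fD$ with some $\Sigma_{in}$-labeling: some vertex $v$ has $\fA'(\fD)|_{B_t(v)}\notin\mathcal{P}$. Because distinct toast pieces have $q$-separated boundaries, only finitely many pieces of $\fD$ lie inside the smallest piece $D\supseteq B_t(v)$; call this finite collection $\mathcal{E}$. The violation at $v$ is determined by $\fA'$'s outputs on $\mathcal{E}$. The heart of the argument is to show that the event
\[
M \;=\; \bigl\{\ell : \fA(E,c_E(\ell),\ell|_E)=\fA'(E,c_E)\ \text{for every }E\in\mathcal{E}\bigr\}
\]
has positive probability, where $c_E(\ell)$ is the partial coloring that $\fA$'s execution on $\fD$ places on the sub-pieces of $E$. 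Granted this, with positive probability $\fA(\fD,\ell)$ exhibits the same violation at $v$, contradicting the a.s.\ correctness of $\fA$.

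The positivity $\mathbb{P}[M]>0$ would be proved by induction on the laminar structure of $\mathcal{E}$: pieces of the same rank are pairwise disjoint, so their $\ell$-labels are independent, giving the base case and the ``side-by-side'' part of the inductive step at once. The main obstacle will be the ``nested'' part of the step: conditioning on sub-match at rank $\le k$ may shrink the support of $\fA$'s output on a rank-$(k{+}1)$ piece $G$, so that the marginal-support choice $\fA'(G,c_G)\in O(G,c_G)$ might fall outside the conditional support given sub-match. I expect to overcome this by strengthening the definition so that $O(A,c)$ contains only those $e$ which lie in $\fA$'s conditional support under every history of sub-piece $\fA$-outputs leading to $(A,c)$, and then showing inductively—using the fact that $\fA$ succeeds on a set of $\ell$ of full measure together with a Fubini-type argument over the product structure of $\ell$ on $A$ and its sub-pieces—that this refined $O(A,c)$ is still nonempty at every $(A,c)$ that arises during an execution of $\fA'$.
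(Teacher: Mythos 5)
Your overall strategy coincides with the paper's: derandomize by always outputting a coloring that the randomized algorithm $\fA$ would produce with positive probability, and maintain inductively that the deterministic partial coloring on each piece is a positive-probability realization of a run of $\fA$. You also correctly isolate the crux, which is that the \emph{marginal} support of $\fA(A,c,\cdot)$ may differ from its support \emph{conditioned} on the sub-pieces having received the colors that $\fA'$ gave them, because the labels $\ell$ on the sub-pieces are reused when $\fA$ colors the larger piece.

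The gap is in your proposed repair of that crux, which you leave as an expectation rather than a proof. You redefine $O(A,c)$ as the set of outputs lying in $\fA$'s conditional support under \emph{every} history of sub-piece outputs leading to $(A,c)$, and then need this intersection to be nonempty. That is both unproven and stronger than necessary: the decomposition of $\dom(c)$ into sub-pieces is not determined by $c$, different witnessing decompositions condition on different positive-measure events, and the corresponding conditional supports need not have a common element, so the universally quantified set could be empty. (There is also a mild circularity in quantifying over histories "leading to $(A,c)$" while $\fA'$ is still being defined.) The statement you actually need is existential, and the paper's proof makes exactly this move: for each $(A,c)$ it fixes \emph{one} positive-probability event $\mathcal{E}(A,c)$ on $\dom(c)$ witnessing that $c$ arises from some inductive run of $\fA$ on a partial $q$-toast, then defines $\fA'(A,c)$ to be an outcome that $\fA(A,c,\cdot)$ takes on a positive-probability event $\mathcal{F}(A,c)$ whose restriction to $\dom(c)$ lies inside $\mathcal{E}(A,c)$. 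The invariant "$(A,\fA'(A,c))$ comes from a partial random $q$-toast, witnessed by $\mathcal{F}(A,c)$" then propagates by induction on rank (independence of $\ell$ across disjoint sub-pieces gives the joint positive probability — your ``side-by-side'' case), and almost-sure correctness of $\fA$ together with $t\le q$ finishes. If you replace "every history" by "the single witnessing event fixed at the previous stage and carried through the induction," your argument closes; as written, the nonemptiness of your refined $O(A,c)$ is the missing step.
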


\begin{proof}
It is easy to see that $\toast\subseteq \rtoast$.
Suppose that $\Pi$ is in the class $\rtoast$ and fix a witnessing $q\in \mathbb{N}$ together with a spontaneous randomized $\toast$ algorithm $\fA$.
We may assume that $t$, the locality of $\Pi$, satisfies $t\le q$.

Let $(A,c)$ be such that $A\subseteq \mathbb{Z}^d$ is finite and connected and $\dom(c)\subseteq A$.
We say that $(A,c)$ comes from a \emph{partial random $q$-toast} if there is a collection $\mathcal{D}$ of connected subsets of $A$ together with an event $\mathcal{F}$ on $\dom(c)$, i.e., $\mathcal{F}$ is a measurable subset of $[0,1]$ assignments to vertices from $A$, such that
\begin{itemize}
    \item if $D,E\in \mathcal{D}$ and $d(\partial D,\partial E)\le q$, then $D=E$,
    \item $A\in \mathcal{D}$,
    \item $\bigcup_{E\in \mathcal{D}\setminus \{A\}}E=\dom(c)$,
    \item $\mathcal{F}$ has a non-zero probability. If we fix a particular assignment of randomness from $\mathcal{F}$ and apply inductively $\fA$ to $\mathcal{D}\setminus \{A\}$ we obtain $c$.
\end{itemize}
If $(A,c)$ comes from a partial random $q$-toast, then we let $\mathcal{E}(A,c)$ to be any event $\mathcal{F}$ as above.
Otherwise, we put $\mathcal{E}(A,c)$ to be any event on $\dom(c)$ of non-zero probability.
It is easy to see that $\mathcal{E}$ does not depend on the position of $A$ in $\mathbb{Z}^d$.
This is because $\fA$ is such.

We now define a spontaneous toast algorithm $\fA'$.
Given $(A,c)$, we let $\mathcal{F}(A,c)$ to be any event on $A$ that has non-zero probability, satisfies
$$\mathcal{F}(A,c)\upharpoonright \dom(c)\subseteq \mathcal{E}(A,c)$$
and $\fA(A,c)$ gives the same outcome on $\mathcal{F}(A,c)$.
Let $\fA'(A,c)$ be equal to this outcome on $\mathcal{F}(A,c)$.
It is clear that $\fA'$ is a spontaneous $\toast$ algorithm.
Observe that if $(A,c)$ comes from a partial random $q$-toast, then $(A,\fA'(A,c))$ comes from a partial random $q$-toast.
Namely, use the same $q$-toast witness $\mathcal{D}$ as for $(A,c)$ together with the event $\mathcal{F}(A,c)$.

It remains to show that $\fA'(\mathcal{D})$ is a $\Pi$-coloring for every $q$-toast $\mathcal{D}$.
Induction on $\mathfrak{r}_{\mathcal{D}}$ shows that $(D,\fA'(D))$ comes from a partial random $q$-toast for every $D\in \mathcal{D}$.
This is easy when $\mathfrak{r}_{\mathcal{D}}(D)=0$ and follows from the observation above in general.
Let $v\in \mathbb{Z}^d$ and pick a minimal $D\in \mathcal{D}$ such that there is $D'\in \mathcal{D}$ that satisfies $v\in D'$ and $D'\subseteq D$.
Since $t\le q$ we have that $\fB(v,t)\subseteq D$.
Since $(D,\fA'(D))$ comes from a partial random $q$-toast it follows that there is a non-zero event $\mathcal{F}$ such that 
$$\fA'(D)\upharpoonright \fB(v,t)=\fA(\mathcal{D})\upharpoonright \fB(v,t)$$
on $\mathcal{F}$.
This shows that the constraint of $\Pi$ at $v$ is satisfied and the proof is finished.
\end{proof}


\subsection{$\ffiid = \rltoast$}
\label{subsec:tail=randomlazytoast}

In this section, we prove that the class $\rltoast$ in fact covers the whole class $\ffiid = \bigcup_{t(\eps)} \olocal(t(\eps))$. 

\begin{restatable}{theorem}{tailrltoast}
\label{thm:tail=rltoast}
Let $\Pi$ be an LCL.
Then $\Pi$ is in the class $\lrtoast$ if and only if $\Pi$ is in the class $\ffiid$.
\end{restatable}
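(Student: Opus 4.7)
The easy direction $\rltoast \subseteq \ffiid$ follows from \cref{thm:toast_in_tail}. Given a lazy randomized toast algorithm $\fA$ with parameter $q$, split the random labels into two independent streams, use one stream to build a $q$-toast $\mathcal{D}$ as an ffiid (via the uniform local construction of \cref{thm:toast_in_tail}), and use the second stream to run $\fA$ on top of $\mathcal{D}$. For any vertex $v$, the pieces containing $v$ (of each rank) are determined by a finite neighborhood of the first stream almost surely, and by assumption $\fA$ succeeds in coloring $v$ at some finite level of $\mathcal{D}$ with probability $1$. Hence the coding radius of the combined process is finite almost surely, so the combined process is an ffiid solving $\Pi$.

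For the harder direction $\ffiid \subseteq \rltoast$, fix an ffiid $F$ solving $\Pi$. I build a lazy randomized toast algorithm $\fA'$ with parameter $q := 1$ (any $q \geq 1$ works). Given a piece $A$ with the partial coloring $c$ accumulated from smaller ranks and random labels $\ell \colon A \to [0,1]$, for each $v \in A \setminus \dom(c)$ inspect the coding radius $R_F(v)$ of $F$ as determined by $\ell$; if $\mathcal{B}(v, R_F(v)) \subseteq A$, set $\fA'(A,c,\ell)(v)$ to be $F(\ell')(v)$ for any extension $\ell'$ of $\ell$ to $\mathbb{Z}^d$ (well-defined by the choice of $R_F(v)$); otherwise leave $v$ uncolored. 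Shift-equivariance of $F$ guarantees that $\fA'$ depends only on the isomorphism type of $(A,c,\ell)$, so it satisfies the formal definition of a randomized $\toast$ algorithm.

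Because every committed color of $\fA'$ equals $F(\ell')(v)$ for a global extension $\ell'$ of the random labels, the values at a given vertex produced across different ranks always agree, so $\fA'(\mathcal{D})$ is a well-defined partial coloring. Moreover, on its domain it coincides with $F(\ell')$, hence is a valid $\Pi$-coloring. It remains to show that almost surely $\dom(\fA'(\mathcal{D})) = \mathbb{Z}^d$ for every $q$-toast $\mathcal{D}$. For this I use the covering property: for every finite set $S \subseteq \mathbb{Z}^d$ there is a piece $D \in \mathcal{D}$ with $S \subseteq D$. This follows from the axiom "for every $g,h \in \mathbb{Z}^d$ some piece contains both" combined with the laminar structure of $\mathcal{D}$: two pieces that intersect but neither contains the other would, by connectedness of the pieces, force a pair of boundary vertices at distance $\leq q$, contradicting the $q$-separation axiom; hence pieces containing a common vertex are totally ordered by inclusion, and one can take the largest among finitely many pieces witnessing the pairs from $S$. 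Applying the covering property to $S = \mathcal{B}(v, R_F(v))$, which is finite almost surely, yields a piece in which $\fA'$ commits a color to $v$.

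The main obstacle is the covering/laminar property of a $q$-toast; everything else is bookkeeping. This property is standard in the literature but does not appear explicitly in the definition, and the cleanest proof is the connectedness-plus-separation argument sketched above.
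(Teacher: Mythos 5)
Your proof follows the paper's own argument in both directions: the forward direction composes the uniform toast construction of \cref{thm:toast_in_tail} with the lazy randomized algorithm, and the converse ``ignores the toast and waits for the coding radius,'' committing the color of $v$ exactly when $\mathcal{B}(v,R_F(v))$ fits inside the current piece. The one step you flag as the main obstacle, however, is not proved correctly: laminarity does \emph{not} follow from connectedness plus the boundary-separation axiom. In $\mathbb{Z}$ take $D=\{0,\dots,10q\}$ and $E=\{5q,\dots,15q\}$; both are connected, they overlap, neither contains the other, yet $d(\partial D,\partial E)\geq 4q>q$, so the separation axiom holds (an analogous pair of overlapping boxes, one shrunk in the remaining coordinates, works in $\mathbb{Z}^d$). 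So your claim that two intersecting, non-nested connected pieces force boundary vertices within distance $q$ is false, and ``take the largest among the finitely many witnesses'' has no meaning without the total order. The fix is not a cleverer argument but a reading of the definition: the pieces are intended to form a laminar family (the paper says so informally, every construction such as \cref{thm:toast moment} produces one, and without laminarity $\fA(\mathcal{D})$ is not even well defined when same-rank pieces overlap). Granting laminarity, your covering argument is fine and supplies exactly the step the paper itself only asserts (``wait until this finite neighborhood of $v$ becomes a subset of some piece'').

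A second, smaller omission is in the forward direction: membership in $\ffiid=\bigcup_t\olocal(t(\eps))$ requires a tail bound that is uniform over $\Sigma_{in}$-labelings, since $\P(R>t(\eps))$ is defined as a supremum over inputs; almost-sure finiteness of the coding radius for each labeling separately is not literally enough. The paper closes this with a compactness/$\sigma$-additivity argument (if $a_{v,r}\not\to 0$, extract a limiting labeling on which the failure event has positive probability, contradicting that the toast is built almost surely and that $\fA$ succeeds on every toast almost surely). You should add a sentence to the same effect; with $\Sigma_{in}$ finite this is routine.
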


\begin{proof}
Suppose that $\Pi$ is in the class $\lrtoast$, i.e, there is $q\in\mathbb{N}$ together with a randomized $\toast$ algorithm $\fA$ that solves $\Pi$.
It follows from Theorem~\ref{thm:toast moment} that there is a uniform local algorithm $F$ that produces $q$-toast with probability $1$.

Define a uniform local algorithm $\fA'$ as follows.
Let $\fB(v,r)$ be a $\Sigma_{in}$-labeled neighborhood of $v$.
Assign real numbers from $[0,1]$ to $\fB(v,r)$ and run $F$ and $\mathcal{A}$ on $\fB(v,r)$.
Let $a_{v,r}$ be the supremum over all $\Sigma_{in}$-labelings of $\fB(v,r)$ of the probability that $\fA'$ fails, that is, the output at $v$ is not defined.
If $a_{v,r}\to 0$ as $r \to \infty$, then $\fA'$ is a uniform local algorithm.

Suppose not, i.e., $a_{v,r}>a>0$.
Since $\Sigma_{in}$ is finite, we find by a compactness argument a labeling of $\mathbb{Z}^d$ with $\Sigma_{in}$ such that the event $\mathcal{F}_r$ that $\fA'$ fails on $\fB(v,r)$ with the restricted labeling has probability bigger than $a$.
Note that since the labeling is fixed, we have $\mathcal{F}_r\supseteq \mathcal{F}_{r+1}$.
Set $\mathcal{F}=\bigcap_{r\in \mathbb{N}} \mathcal{F}_r$.
Then by $\sigma$-additivity, we have that the probability of the event $\mathcal{F}$ is bigger than $a>0$.
However, $F$ produces toast almost surely on $\mathcal{F}$ and $\fA$ produces $\Pi$-coloring with probability $1$ for any given toast.
This contradicts the fact that the probability of $\mathcal{F}$ is not $0$.

The idea to prove the converse is to ignore the $q$-toast part and wait for the coding radius.
Suppose that $\Pi$ is in the class $\ffiid$.
By the definition, there is a uniform local algorithm $\fA$  of complexity $t(\eps)$ that solves $\Pi$. 
Given an input labeling of $\mathbb{Z}^d$ by $\Sigma_{in}$, we see by the definition of $R_\fA$ that every node $v$ must finish after finitely many steps with probability $1$.
Therefore it is enough to wait until this finite neighborhood of $v$ becomes a subset of some piece of a given toast $\mathcal{D}$.
This happens with probability $1$ independently of the given toast $\mathcal{D}$.
\end{proof}

\section{$\toast$: Specific Problems}\label{sec:SpecificProblems}

In this section, we focus on specific problems that are not in the class $\LOCAL(O(\log^* n))$ but they admit a solution by a (spontaneous) $\toast$ algorithm. Recall, that proving that $\Pi \in \toast$ implies that there is a uniform local algorithm with uniform local complexity $(1/\eps)^{1+o(1)}$ via \cref{thm:toast_in_tail} and we discuss in \cref{subsec:descriptive_combinatorics_discussion} that by a result of Gao, Jackson, Krohne, and Seward \cite{GJKS2} it also implies $\Pi\in \borel$. 

The problems we discuss are the following: (a) vertex coloring, (b) edge coloring, and (c) tiling the grid with boxes from a prescribed set (this includes the perfect matching problem).



All these problems were recently studied in the literature from different points of view, see \cite{ArankaFeriLaci, HolroydSchrammWilson2017FinitaryColoring, GJKS, spencer_personal}.
In all cases, the main technical tool, sometimes in a slight disguise, is the toast construction.
Our aim here is to show these problems in a unified context.
We note that sometimes the results in the aforementioned literature are stronger or work in a bigger generality (for example, all results discussed here can also be proven to hold on unoriented grids as discussed in \cite{ArankaFeriLaci}, or the constructed coloring could be aperiodic \cite{spencer_personal}).
In what follows we only give a ``proof by picture'', sketch the description of the corresponding spontaneous toast algorithm and refer the reader to other papers for more details and stronger results.

Throughout this section, we assume that $d>1$.
We refer the reader to \cref{sec:OtherClasses} for the definition of classes $\cont$, $\borel$, etc.
Our main contribution in this section is the following:
\begin{enumerate}
    \item The fact that the problem of vertex $3$-coloring is in $\toast$ together with the speed-up \cref{thm:grid_speedup} answers a question (i) from \cite{HolroydSchrammWilson2017FinitaryColoring}. 
    \item We observe that the problem of edge $2d$-coloring the grid $\mathbb{Z}^d$ is in $\toast$, thereby answering a question of \cite{GJKS} coming from the area of descriptive combinatorics. Independently, this was proven in \cite{ArankaFeriLaci} and \cite{Felix}. 
\end{enumerate}

In general, our approach gives that if an LCL $\Pi$ is in the class $\toast\setminus \local(O(\log^*(n)))$, then $1\le {\bf M}(\Pi)\le d-1$ by \cref{thm:toast_in_tail} and \cref{thm:grid_speedup}.
Recall that $\bf M$ is the supremum of all possible finite moments.
The examples in Section~\ref{sec:Inter} show that there are LCLs with ${\bf M}(\Pi)=\ell$ for every natural number $1\le \ell\le d-1$, but for most natural LCLs we do not know the exact value of ${\bf M}(\Pi)$, unless $d=2$.

We invite the reader to first look at \cref{fig:3-coloring,fig:4-edge-coloring,fig:Tiling} that suggest how concrete problems are usually solved in the class $\toast$.
We usually have a certain nice periodic solution in mind, for example, for 3-coloring, we think of the simple periodic 2-coloring of the grid as the idealized solution we want to build.
Although the solution that we build in one piece of the toast is not necessarily this idealized solution, it looks like the idealized solution on the boundary.
Hence, we can think of the idealized solution as an ``interface'' between toast pieces of different levels.
In other words, we simply have to show how to combine several idealized solutions together in one toast piece so that we get a solution that looks like the idealized one on the outside of that piece. 
For example, in the case of 3-coloring, we need to be able to combine several 2-colorings together so that the outside looks like a 2-coloring. As seen in \cref{fig:3-coloring}, the third color gives us the flexibility to change the ``parity'' of combined 2-colorings, which makes such a $\toast$ algorithm possible.

\subsection{Vertex Coloring}
\label{subsec:3coloring}

Let $d>1$.
Recall that $\Pi^d_{\chi,3}$ is the \emph{proper vertex $3$-coloring problem}.
This problem was studied extensively in all contexts.
It is known that $\Pi^d_{\chi,3}\not\in \LOCAL(O(\log^* n))$ \cite{brandt_grids}, $\Pi^d_{\chi,3}\not\in \CONT$ \cite{GJKS}, $\Pi^d_{\chi,3}\in \BOREL$ \cite{GJKS2} and $0<{\bf M}(\Pi^d_{\chi,3})\le 2$ \cite{HolroydSchrammWilson2017FinitaryColoring}.
A version of the following theorem is proven in \cite{GJKS2}, here we give a simple ``proof by picture''. 

\begin{theorem}[\cite{GJKS2}]\label{thm:3in toast}
Let $d>1$.
The LCL $\Pi^d_{\chi,3}$ is in the class $\toast$.
\end{theorem}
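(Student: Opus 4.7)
The plan is to exhibit a spontaneous $\toast$ algorithm $\fA$ that, applied inductively to any $q$-toast $\fD$ (for a sufficiently large constant $q$, say $q=10$), produces a proper $3$-coloring of $\mathbb{Z}^d$. The design uses the bipartite structure of $\mathbb{Z}^d$ together with color $3$ as a ``moat'' at piece boundaries.

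Single-piece rule: given a piece $D$ and a partial coloring $c$ on its already-processed sub-pieces, I would partition the new vertices by the two parity classes of the bipartite graph $\mathbb{Z}^d$ (determined relatively inside $D$ from the oriented, labeled edges), color the ``interior'' of $D$ by $1$ on one parity class and $2$ on the other, and replace color $1$ with color $3$ on the one-cell-thick inner boundary of $D$. Since two adjacent boundary vertices have opposite parities, this moat is properly colored ($3$ next to $2$ and never $3$ next to $3$), and the moat-to-interior interface is also proper. Which parity class is labeled ``parity A'' is fixed for a rank-$0$ piece from the labeled isomorphism type of $D$, and for higher-rank pieces it is read off from the existing colors in $c$ at an interior vertex of any sub-piece, so that the extension matches the sub-pieces on their boundaries.

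The main obstacle is that two rank-$0$ sub-pieces $D_1, D_2$ contained in a common parent piece $D$ may have chosen opposite parity conventions, in which case a single bipartite $2$-coloring on the fill-in region $R = D\setminus(D_1\cup D_2)$ would produce a monochromatic $1$--$1$ or $2$--$2$ edge. The fix exploits the $q$-separation of $\partial D_1$ and $\partial D_2$: inside $R$ one inserts a constant-width ``transition slab'' that separates the portion of $R$ adjacent to $D_1$ from the portion adjacent to $D_2$, places color $3$ on a suitable independent pattern inside the slab, and uses the corresponding sub-piece's convention on each side of the slab. The local case check that this transition is proper (i.e., that the pattern inside the slab is thick enough that no $1$--$1$ or $2$--$2$ edge survives across the convention swap) is exactly the ``proof by picture'' step, and it works precisely because the slab has width bounded by a constant depending only on $d$, which fits inside the $q$-gap between $\partial D_1$ and $\partial D_2$ once $q$ is chosen large enough.

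With the single-piece rule defined, the verification is then by induction on the rank $\mathfrak{r}_\fD$: within each piece, correctness follows from the moat plus the parity-matched fill-in (and the transition slabs when convention disagreements occur); between distinct maximal pieces of $\fD$, the $q$-separation of boundaries guarantees that moat vertices of different pieces are not adjacent, so no new monochromatic edges are created at piece interfaces. Finally, $\fA$ depends only on the labeled isomorphism type of $(D, c)$ (the parity convention, the choice of slab, and the moat are all defined from this data), so it is a bona fide spontaneous $\toast$ algorithm, establishing $\Pi^d_{\chi,3}\in\toast$.
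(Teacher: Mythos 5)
Your proposal is correct and takes essentially the same route as the paper: both arguments two-color each piece by the bipartite parity classes of $\mathbb{Z}^d$ and then use the third color in a constant-width layer surrounding any sub-piece whose parity convention disagrees with its parent, so that the third color plays the role of one parity class seen from inside and the other seen from outside, with the $q$-separation of boundaries guaranteeing room for the gadget. The paper's own proof is likewise only a ``proof by picture'' at this level of detail, so your sketch (moat plus transition shell) is an adequate match.
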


\begin{proof}[Proof Sketch]
The toast algorithm tries to color all nodes in a given piece with two colors, blue and red. However, it can happen that two partial colorings of small pieces that have different parity in a bigger piece have to be connected (see \cref{fig:3-coloring}). 
In this case, we take the small pieces colored with incorrect parity and ``cover'' their border with the third color: green. From the inside of a small piece, the green color corresponds to one of the main two colors, while from the outside it corresponds to the other color. This enables us to switch color parity. 
\end{proof}

As a consequence of results from \cref{sec:Toast} we can now fully determine the moment behavior of the problem which answers a question from \cite{HolroydSchrammWilson2017FinitaryColoring}. 

\begin{corollary}
Let $d>1$.
Every uniform local algorithm that solves $\Pi^d_{\chi,3}$ has the $1$-moment infinite, but there is a uniform local algorithm solving $\Pi^d_{\chi,3}$ with all $(1-\eps)$-moments finite. 
In particular, ${\bf M}(\Pi^d_{\chi,3})=1$. 
\end{corollary}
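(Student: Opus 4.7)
The plan is to establish the two halves of the statement separately: existence of an uniform local algorithm with all $(1-\eps)$-moments finite, and non-existence of any uniform local algorithm with $\mathbb{E}(R)<\infty$. The claim ${\bf M}(\Pi^d_{\chi,3})=1$ is then immediate from the definition of ${\bf M}$.

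For the upper half, I would invoke \cref{thm:3in toast} to place $\Pi^d_{\chi,3}$ in $\toast$, and then \cref{thm:toast_in_tail} to obtain an uniform local algorithm $\fA$ with $\P(R_\fA>r)\le 2^{O(\sqrt{\log r})}/r$. For any $0<\alpha<1$, the standard tail-sum formula for nonnegative integer-valued random variables gives
\[
\mathbb{E}(R_\fA^\alpha) \le 1+\alpha\sum_{r\ge 1} r^{\alpha-2}\cdot 2^{O(\sqrt{\log r})},
\]
which converges because $\alpha-2<-1$ and $2^{O(\sqrt{\log r})}=r^{o(1)}$.

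For the lower half, I would first treat $d=2$. Assume for contradiction that an uniform local algorithm $\fA$ solves $\Pi^2_{\chi,3}$ with $\mathbb{E}(R_\fA)<\infty$. Since the tail $r\mapsto \P(R_\fA\ge r)$ is nonincreasing, integrability forces $r\cdot\P(R_\fA\ge r)\to 0$, so the uniform complexity of $\fA$ is $o(1/\eps)=o(\sqrt[d-1]{1/\eps})$. The speed-up \cref{thm:grid_speedup} then places $\Pi^2_{\chi,3}$ in $\olocal(O(\log^*1/\eps))$, which by \cref{cor:local=tower} equals $\local(O(\log^* n))$, contradicting the known fact $\Pi^2_{\chi,3}\notin\local(O(\log^* n))$ \cite{brandt_grids}.

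For $d\ge 3$ I would reduce to $d=2$: given $\fA$ on $\mathbb{Z}^d$ solving $\Pi^d_{\chi,3}$ with $\mathbb{E}(R_\fA)<\infty$, fix a measure-preserving isomorphism $\phi:([0,1],\lambda)\to([0,1]^{\mathbb{Z}^{d-2}},\lambda^{\otimes \mathbb{Z}^{d-2}})$ so that each node $v\in \mathbb{Z}^2$ can generate iid $[0,1]$-labels for its ``column'' $\{v\}\times \mathbb{Z}^{d-2}\subseteq \mathbb{Z}^d$; then $v=(x_1,x_2)$ simulates $\fA$ at $(x_1,x_2,0,\dots,0)\in \mathbb{Z}^d$ and copies its output. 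The restriction of any proper $3$-coloring of $\mathbb{Z}^d$ to the sub-lattice $\mathbb{Z}^2\times\{\mathbf{0}\}$ is a proper $3$-coloring of $\mathbb{Z}^2$, and the new coding radius $R_{\fA'}$ is pointwise bounded by $R_\fA$ because the $L^1$-ball of radius $R$ in $\mathbb{Z}^d$ around any sub-lattice point projects into the $L^1$-ball of the same radius in $\mathbb{Z}^2$; hence $\mathbb{E}(R_{\fA'})\le \mathbb{E}(R_\fA)<\infty$, contradicting the $d=2$ case. The main obstacle is precisely this reduction: for $d\ge 3$ the speed-up \cref{thm:grid_speedup} alone only forbids uniform complexity $o(\sqrt[d-1]{1/\eps})$, which is strictly weaker than the $o(1/\eps)$ threshold forced by finite $1$-moment, and the column-encoding trick is what bridges this gap by transferring the question to a dimension in which the speed-up threshold is tight.
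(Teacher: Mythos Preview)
Your proof is correct and follows essentially the same structure as the paper's: the upper bound via $\toast$ and \cref{thm:toast_in_tail}, the $d=2$ lower bound via the speed-up \cref{thm:grid_speedup} combined with $\Pi^2_{\chi,3}\notin\local(O(\log^* n))$, and the reduction of general $d$ to $d=2$ by restricting to a $2$-dimensional sublattice. You are more explicit than the paper in two places --- the monotone-tail argument that finite first moment forces $r\cdot\P(R\ge r)\to 0$ (which is exactly what is needed to meet the hypothesis of \cref{thm:grid_speedup}), and the measure-preserving column encoding that realises the dimension reduction as an actual uniform local algorithm --- but these are just the details behind the paper's terser ``with the same uniform complexity''.
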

\begin{proof}
By \cref{thm:3in toast}, $\Pi^d_{\chi,3}$ is in the class $\toast$.
It follows from \cref{thm:toast_in_tail} that ${\bf M}(\Pi^d_{\chi,3})\ge 1$.
By the speedup \cref{thm:grid_speedup} and the fact that $3$-coloring was proven not to be in class $\local(O(\log^* n)) = \olocal(O(\log^* 1/\eps)) = \cont$ \cite{brandt_grids,HolroydSchrammWilson2017FinitaryColoring,GJKS}, we have that every uniform local algorithm $\fA$ solving $\Pi^2_{\chi,3}$ must have the $1$-moment infinite.
In particular, ${\bf M}(\Pi^2_{\chi,3})=1$.
It remains to observe that every vertex $3$-coloring of $\mathbb{Z}^d$ yields a vertex $3$-coloring of $\mathbb{Z}^2$ by forgetting the last $(d-2)$ dimensions.
Consequently, every uniform local algorithm $\fA$ that solves $\Pi^d_{\chi,3}$ defines a uniform local algorithm $\fA'$ that solves $\Pi^2_{\chi,3}$ with the same uniform complexity.
This finishes the proof.
\end{proof}

The $3$-coloring discussion above also implies that the toast itself cannot be constructed faster than with $O(1/\eps)$ uniform complexity, otherwise, $3$-coloring could be constructed with such a decay, too. 
\begin{corollary}
\label{cor:toast_lower_bound}
There is a constant $C > 0$ such that a $q$-toast cannot be constructed with uniform complexity $C/\eps$ on any $\mathbb{Z}^d$.
Here, by constructing a $q$-toast we mean that the coding radius of each node $u$ is such that $u$ can describe the minimal toast piece $D \in \fD$ such that $u \in D$. 
\end{corollary}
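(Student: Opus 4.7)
The plan is to argue by contradiction using the $3$-coloring lower bound established in \cref{subsec:3coloring}. Suppose that for every $C > 0$ a $q$-toast on $\mathbb{Z}^d$ could be constructed by some uniform local algorithm $\fA_{toast}$ with uniform complexity $C/\eps$. Combining $\fA_{toast}$ with the spontaneous $\toast$ algorithm $\fA_{3col}$ for $\Pi^d_{\chi,3}$ guaranteed by \cref{thm:3in toast} should yield an uniform local algorithm solving $\Pi^d_{\chi,3}$ with uniform complexity $\alpha C/\eps$, where $\alpha$ is a constant depending only on $q$ and on $\fA_{3col}$ (and not on $\eps$ or on the choice of $\fA_{toast}$).

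In more detail, at a vertex $v$ the algorithm $\fA_{toast}$ identifies the minimal piece $D\in\fD$ with $v\in D$. Since all sub-pieces $E\in\fD$ with $E\subsetneq D$ lie inside $D$, and since $D$ is contained in the ball of radius $R_{toast}(v)$ around $v$, the entire laminar structure of $\fD$ restricted to $D$ can be recovered from $v$ after reading an additional constant multiple of $R_{toast}(v)$ around $v$ (by querying $\fA_{toast}$ at each $u\in D$, whose own toast radius is bounded in terms of $R_{toast}(v)$ and $\diam(D)$). This gives the $3$-coloring algorithm coding radius $O(R_{toast}(v))$, hence uniform complexity $\alpha C/\eps$. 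Restricting this $3$-coloring of $\mathbb{Z}^d$ to a $2$-dimensional hyperplane, as in the proof of the preceding corollary, produces an uniform local algorithm for $\Pi^2_{\chi,3}$ with the same uniform complexity $\alpha C/\eps$.

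It then remains to choose $C$ small enough that $\alpha C \le C_{2,1}$, where $C_{2,1}$ is the constant from the speedup \cref{thm:grid_speedup} with $d=2$ and $t=1$ (the locality of the $3$-coloring constraint). By that theorem, the existence of an algorithm for $\Pi^2_{\chi,3}$ of uniform complexity $\le C_{2,1}/\eps$ forces $\Pi^2_{\chi,3}\in\olocal(O(\log^* 1/\eps))$, and then \cref{cor:local=tower} gives $\Pi^2_{\chi,3}\in\local(O(\log^* n))$, which directly contradicts the lower bound of \cite{brandt_grids}.

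The main obstacle I anticipate is making the coding radius blow-up in the first step truly independent of $\eps$: a naïve union bound over all $\Theta((C/\eps)^d)$ vertices of $D$ would turn the complexity $C/\eps$ into something like $C/\eps^{1/d}$, which is useless for the argument. The point to be handled carefully is therefore that the subpieces of $D$ are \emph{already inside $D$} and thus do not require independent toast queries — their structure is visible from $v$ at radius $O(R_{toast}(v))$ — so one obtains a deterministic bound $R_{3col}(v)\le O(R_{toast}(v))$ rather than a probabilistic union-bound blow-up.
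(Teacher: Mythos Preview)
Your overall strategy is exactly the paper's: a fast toast construction combined with the spontaneous $\toast$ algorithm for $\Pi^d_{\chi,3}$ from \cref{thm:3in toast} would give a fast uniform algorithm for $3$-coloring, which after restricting to $\mathbb{Z}^2$ contradicts \cref{thm:grid_speedup}. The paper states this as the single sentence immediately preceding the corollary.

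You are right to flag the composition step, but your resolution has a gap. You claim that for $u\in D$ the toast coding radius $R_{toast}(u)$ is ``bounded in terms of $R_{toast}(v)$ and $\diam(D)$'', so that $R_{3col}(v)\le O(R_{toast}(v))$ deterministically. This is not justified: although every sub-piece $E_u$ sits geometrically inside $D\subseteq\fB(v,R_{toast}(v))$, the radius of random bits the algorithm needs in order to \emph{determine} $E_u$ from the iid labels may be much larger than $\diam(E_u)$ or $R_{toast}(v)$. The hypothesis $\P(R_{toast}>C/\eps)\le\eps$ is only a per-vertex tail bound and does not force those bits to lie near $v$, so the union-bound blow-up you worried about is genuine rather than an artifact. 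The paper does not address this point either; the clean reading is to take ``constructing a $q$-toast'' in the stronger sense where each node also outputs the full sub-toast of its minimal piece --- which holds automatically for the specific construction in \cref{thm:toast moment}, where once $v$ finds $D(x,k)$ it already knows all $\mathcal{D}_i$, $i\le k$, on a ball containing $D(x,k)$. Under that reading both your argument and the paper's go through.
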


\begin{figure}
    \centering
    \includegraphics[width = .95\textwidth]{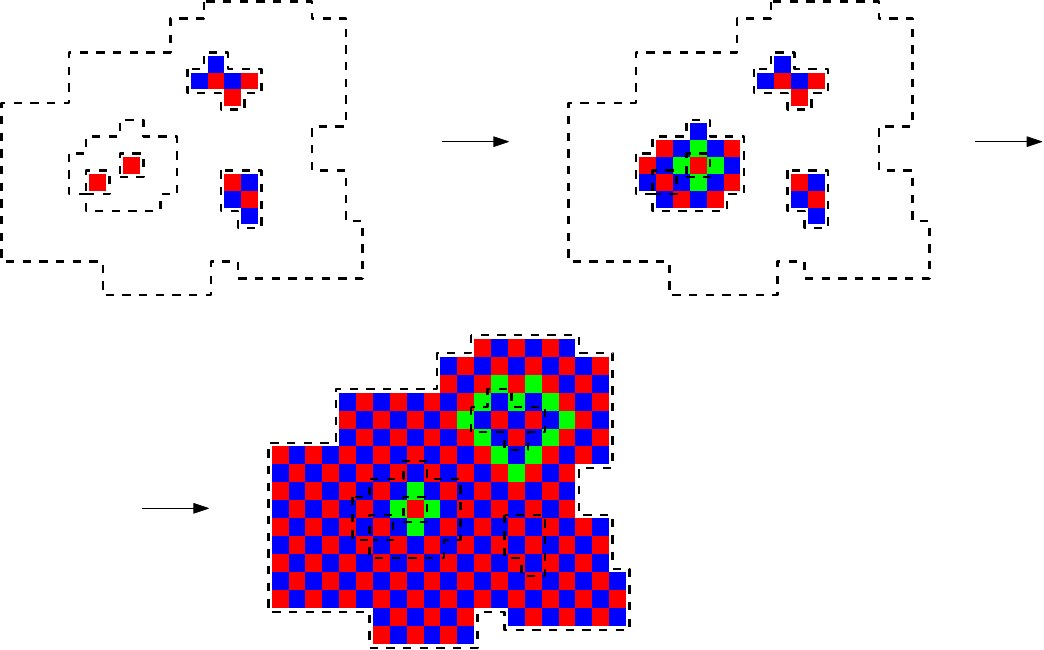}
    \caption{The $\toast$ algorithm is given small pieces of a toast that are already vertex-colored. The first step is to match the parity of colors with respect to the bigger piece. Then color the bigger piece.}
    \label{fig:3-coloring}
\end{figure}

\subsection{Edge Coloring}
\label{subsec:edgecoloring}

\begin{figure}
    \centering
    \includegraphics[width = .95\textwidth]{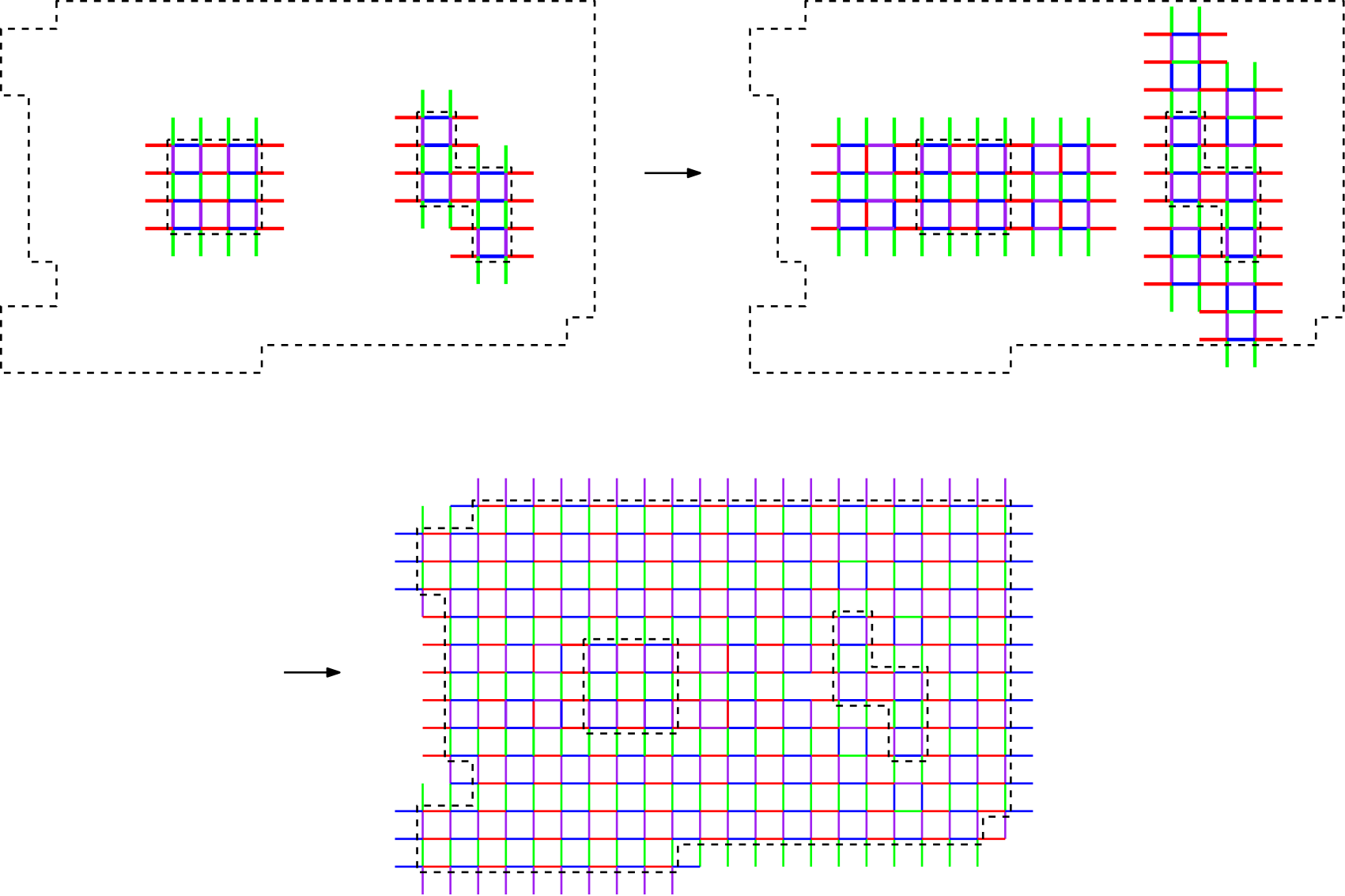}
    \caption{The $\toast$ algorithm is given small pieces of a toast that are already edge-colored. The first step is to match the parity of colors with respect to the bigger piece. This is done by adding a three-layered gadget in the problematic directions. After that, the coloring is extended to the bigger piece.}
    \label{fig:4-edge-coloring}
\end{figure}
Let $d>1$.
We denote as $\Pi^d_{\chi',2d}$ the \emph{proper edge $2d$-coloring problem}.
Observe that a solution to $\Pi^d_{\chi',2d}$ yields a solution to the perfect matching problem $\Pi^d_{pm}$ (because $\mathbb{Z}^d$ is $2d$-regular).
This shows that $\Pi^d_{\chi',2d}$ is not in the class $\LOCAL(O(\log^* n))$, because the perfect matching problem clearly cannot be solved with a local algorithm: if it was, we could find a perfect matching in every large enough finite torus, which is clearly impossible since it can have an odd number of vertices \cite{brandt_grids}.

\begin{theorem}[See also \cite{ArankaFeriLaci} and \cite{Felix}]\label{thm:edge toast}
Let $d>1$.
Then $\Pi^d_{\chi',2d}$ is in the class $\toast$.
\end{theorem}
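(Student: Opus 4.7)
The plan is to mirror the proof of \cref{thm:3in toast}. Fix the canonical proper edge $2d$-coloring $c^*$ of $\mathbb{Z}^d$ that assigns to the edge $\{v, v+e_i\}$ the color $(i, v_i \bmod 2)$; at every vertex the two edges in direction $i$ carry the distinct colors $(i,0)$ and $(i,1)$, so $c^*$ is proper, and any translate of $c^*$ is recorded by a parity vector in $(\mathbb{Z}/2\mathbb{Z})^d$. I will refer to such translates, and to colorings on a subset that agree with one of them, as \emph{canonical}.

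I would then describe a spontaneous $\toast$ algorithm as follows. For a rank-$0$ piece $D$, use the restriction of $c^*$. For a higher-rank piece $D$ containing already-colored smaller pieces $D_1, \dots, D_k$, each carrying a canonical coloring $c_j$, pick a target canonical coloring $c_D$ on $D$. For every pair $(j, i)$ such that $c_j$ and $c_D$ disagree in the parity of direction $i$, install a \emph{three-layer parity-flip gadget}: a slab of thickness $3$ placed in the buffer between $\partial D_j$ and $\partial D$, perpendicular to direction $i$, whose edges are rewired so that its inner face agrees with $c_j$ and its outer face agrees with $c_D$. After installing all gadgets for all $(j,i)$, color the remaining edges of $D$ according to $c_D$; every edge inside $D$ then receives a color, so the algorithm is spontaneous.

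The hard part will be constructing the gadget and choosing $q$. In dimension $2$, three consecutive rows (or columns) allow one to swap the two colors of one direction while leaving the perpendicular direction untouched; properness at each vertex is verified directly and is the content of the picture in \cref{fig:4-edge-coloring}. For $d \ge 3$ I would accomplish a parity flip in direction $i$ by applying the $2$-dimensional gadget inside a single $2$-plane containing direction $i$; since only colors in direction $i$ are altered, the modifications for different directions commute and can be installed independently. Finally, taking $q$ larger than, say, $3d$ times the thickness needed for a single flip ensures that gadgets for distinct $j$ or distinct $i$ do not overlap and all fit within the buffer between $\partial D_j$ and $\partial D$ guaranteed by the $q$-toast condition. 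Once the gadget is verified the induction on $\mathfrak{r}_\mathcal{D}$ is routine, and the resulting construction agrees with those obtained independently in \cite{ArankaFeriLaci} and \cite{Felix}.
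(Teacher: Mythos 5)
Your setup ($c^*$ with a parity vector in $(\mathbb{Z}/2\mathbb{Z})^d$ serving as the interface, a thickness-$3$ slab gadget placed in the buffer to flip a parity, induction on the rank with $q$ chosen large enough for the gadgets to fit) is the same construction the paper uses, and your $d=2$ argument is at the same ``proof by picture'' level of rigor as the paper's own sketch.

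The gap is in the passage from $d=2$ to $d\ge 3$. You assert that the parity flip in direction $i$ alters only colors in direction $i$, so that the flips for distinct directions commute and can be installed independently. This is false, and it is precisely the point the construction must address: a purely one-dimensional alternating edge $2$-coloring cannot change parity while remaining proper (the two edges meeting at the transition vertex would receive the same color), so the thickness-$3$ gadget necessarily re-routes through an auxiliary direction $j\neq i$ --- in \cref{fig:4-edge-coloring} the vertical flip visibly recolors horizontal edges inside the slab. Consequently gadgets for different directions do interact, both through their auxiliary directions and because the flip for direction $i$ cannot live ``inside a single $2$-plane'': every line in direction $i$ through the piece must cross a flip, so the gadget is a full $(d{-}1)$-dimensional cross-section times thickness $3$, and the $2$-dimensional pattern must be extended over that whole slab and checked for consistency with the direction-$j$ edges present there. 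The paper resolves this by performing the flips \emph{sequentially}: each step uses one extra dimension to switch the parity in a single direction while preserving the parities already fixed in all other directions, and the argument is iterated at most $d$ times. Your proof should either adopt that sequential scheme or supply an explicit verification that simultaneously installed gadgets never recolor a common edge; as written, the independence claim is unsupported.
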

\begin{proof}[Proof Sketch]
The spontaneous toast algorithm is illustrated for $d=2$ in \cref{fig:4-edge-coloring}.
The ``interface'' used by the toast algorithm between smaller and bigger pieces is: smaller pieces pretend they are colored such that in the vertical direction violet and green edges alternate, while in the horizontal direction blue and red edges alternate. If all small pieces in a bigger one have the same parity, this interface is naturally extended to the bigger piece. Hence, we need to work out how to fix pieces with bad horizontal/vertical parity, similarly to \cref{thm:3in toast}. 

\cref{fig:4-edge-coloring} shows how to fix the vertical parity, adding the highlighted gadget in the middle picture above and below the given pieces keeps its interface the same but switches the parity. The same can be done in the horizontal direction. 

To show the result for $d>2$ it is enough to realize that switching parity in one dimension can be done with a help of one extra additional dimension while fixing the parity in the remaining dimensions.
Consequently, this allows iterating the previous argument at most $ d$ many times to fix all the wrong parities.
\end{proof}

\begin{corollary}
Let $d>1$.
Then $1 \le {\bf M}(\Pi^d_{\chi',2d}) \le d-1$. 
In particular, ${\bf M}(\Pi^2_{\chi',4})=1$.
\end{corollary}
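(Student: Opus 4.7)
The plan mirrors the proof of the $3$-coloring corollary in \cref{subsec:3coloring}: combine the $\toast$-membership result (\cref{thm:edge toast}) with the general toast$\to\olocal$ translation and the speedup theorem.

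For the lower bound ${\bf M}(\Pi^d_{\chi',2d})\ge 1$, I would simply chain \cref{thm:edge toast} with \cref{thm:toast_in_tail}: since $\Pi^d_{\chi',2d}\in \toast$, there is an uniform local algorithm solving it with uniform complexity $(1/\eps)^{1+o(1)}$, and any such algorithm has the $(1-\delta)$-moment of the coding radius finite for every $\delta>0$ (using \cref{fact:moment_implies_tail} in the reverse direction, or equivalently by a direct summation). Hence ${\bf M}(\Pi^d_{\chi',2d})\ge 1$.

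For the upper bound ${\bf M}(\Pi^d_{\chi',2d})\le d-1$, I would argue by contradiction. Suppose ${\bf M}(\Pi^d_{\chi',2d})>d-1$; then by definition there exists $\alpha>d-1$ and an uniform local algorithm $\fA$ solving the problem whose coding radius has finite $\alpha$-moment. By \cref{fact:moment_implies_tail}, the uniform complexity of $\fA$ is $O(\sqrt[\alpha]{1/\eps})=o(\sqrt[d-1]{1/\eps})$, so in particular for $\eps$ small enough it is bounded by $C_{d,t}\sqrt[d-1]{1/\eps}$. The speedup \cref{thm:grid_speedup} then places $\Pi^d_{\chi',2d}$ in $\olocal(O(\log^* 1/\eps))$, and by \cref{cor:local=tower} it follows that $\Pi^d_{\chi',2d}\in \local(O(\log^* n))$. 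But as noted just before \cref{thm:edge toast}, a solution of $\Pi^d_{\chi',2d}$ yields a perfect matching on every torus of $\mathbb{Z}^d$, which is impossible on tori with an odd number of vertices; so $\Pi^d_{\chi',2d}\notin \local(O(\log^* n))$, a contradiction. Hence ${\bf M}(\Pi^d_{\chi',2d})\le d-1$.

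For the final statement, the case $d=2$ collapses the two bounds to ${\bf M}(\Pi^2_{\chi',4})=1$, which finishes the proof. The only mildly delicate step is the upper bound application of the speedup theorem, where one must be careful to pick $\alpha$ strictly between $d-1$ and ${\bf M}(\Pi^d_{\chi',2d})$ so that $O(\sqrt[\alpha]{1/\eps})$ eventually lies below the speedup threshold $C_{d,t}\sqrt[d-1]{1/\eps}$; this is routine since ${\bf M}$ is defined as a supremum.
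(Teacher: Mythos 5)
Your proof is correct and follows exactly the route the paper intends: the lower bound via \cref{thm:edge toast} plus \cref{thm:toast_in_tail}, and the upper bound by contradiction via \cref{fact:moment_implies_tail}, the speedup \cref{thm:grid_speedup}, \cref{cor:local=tower}, and the odd-torus obstruction to $\local(O(\log^* n))$ membership — this is precisely the general scheme the paper states ("if $\Pi\in\toast\setminus\local(O(\log^* n))$ then $1\le{\bf M}(\Pi)\le d-1$") and mirrors the paper's proof of the analogous $3$-coloring corollary. The attention you pay to choosing $\alpha$ strictly between $d-1$ and the supremum is the right care to take; nothing is missing.
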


We leave it as an open problem to determine the exact value of ${\bf M}(\Pi^d_{\chi',2d})$, where $d>2$.

The following corollary answers a question of \cite{GJKS}. See \cref{sec:OtherClasses} for the definition of the class $\BOREL$. 
\begin{corollary}
Let $d\ge 1$.
Then $\Pi^d_{\chi',2d}$ is in the class $\BOREL$.
In particular, if $\mathbb{Z}^d$ is a free Borel action and $\mathcal{G}$ is the Borel graph generated by the standard generators of $\mathbb{Z}^d$, then $\chi'_\mathcal{B}(\mathcal{G})=2d$, i.e., the Borel edge chromatic number is equal to $2d$.
\end{corollary}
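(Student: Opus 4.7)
The plan is to chain together two ingredients already in hand. First, the preceding theorem gives $\Pi^d_{\chi',2d} \in \toast$, via the explicit parity-fixing gadget construction iterated across dimensions. Second, the introduction of this paper already records the inclusion $\toast \subseteq \BOREL$ proved by Gao--Jackson--Krohne--Seward \cite{GJKS2} (see also Marks--Unger \cite{Circle}): one constructs a toast structure in a Borel way on $\operatorname{Free}(2^{\mathbb{Z}^d})$ and then runs the spontaneous $\toast$ algorithm inductively on the rank $\mathfrak{r}_{\mathcal{D}}$ from \cref{subsec:toast_definition}. Composing the two statements immediately yields $\Pi^d_{\chi',2d}\in \BOREL$, which is the first assertion of the corollary.

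For the ``in particular'' statement about the Borel edge chromatic number, I would split into matching upper and lower bounds. The upper bound $\chi'_{\mathcal{B}}(\mathcal{G}) \le 2d$ is exactly what the first part provides, after transferring from the canonical free graph $\operatorname{Free}(2^{\mathbb{Z}^d})$ to an arbitrary free Borel $\mathbb{Z}^d$-action $\mathcal{G}$. This transfer is the one of Seward--Tucker-Drob \cite{ST} mentioned earlier in the paper; it is the reason $\operatorname{Free}(2^{\mathbb{Z}^d})$ is universal for LCLs on $\mathbb{Z}^d$ in the Borel setting, so any Borel proper edge $2d$-coloring on the canonical graph pulls back to a Borel proper edge $2d$-coloring of $\mathcal{G}$. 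The lower bound $\chi'_{\mathcal{B}}(\mathcal{G}) \ge 2d$ does not require any descriptive set-theoretic machinery at all: $\mathcal{G}$ is $2d$-regular as a plain graph, so the elementary bound $\chi'(H) \ge \Delta(H)$ forces even combinatorial proper edge colorings to use at least $2d$ colors, and hence so do Borel ones.

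There is no genuine obstacle in this corollary; it is a ``packaging'' result, and all the substantive content lives in the preceding theorem (the parity-fixing gadget) and in the cited Borel toast construction. The only step that requires a small comment is the transfer from the canonical graph $\operatorname{Free}(2^{\mathbb{Z}^d})$ to a general free Borel $\mathbb{Z}^d$-action — but this is standard in the area and has already been invoked elsewhere in the paper, so I would simply cite \cite{ST,GJKS2} when writing up the short proof.
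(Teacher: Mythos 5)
Your proposal is correct and is exactly the argument the paper intends: the corollary is an immediate composition of \cref{thm:edge toast} with the Borel toast construction of \cite{GJKS2,Circle} giving $\toast\subseteq\BOREL$, plus the trivial lower bound $\chi'_{\mathcal{B}}(\mathcal{G})\ge\Delta(\mathcal{G})=2d$. The only caveat is that the hypothesis ``$d\ge 1$'' in the corollary's statement should be read as $d>1$ (matching \cref{thm:edge toast}), since for $d=1$ proper edge $2$-coloring of $\mathbb{Z}$ is the parity problem and is not Borel.
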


\begin{remark}[Decomposing grids into finite cycles]
Bencs, Hrušková, Tóth \cite{ArankaFeriLaci} showed recently that there is a $\toast$ algorithm that decomposes an unoriented grid into $d$-many families each consisting of finite pairwise disjoint cycles.
Having this structure immediately gives the existence of edge coloring with $2d$-colors as well as Schreier decoration.
Since every $\toast$ algorithm implies Borel solution on graphs that look locally like unoriented grids \cite{AsymptoticDim}, this result implies that all these problems are in the class $\BOREL$.
We refer the reader to \cite{ArankaFeriLaci} for more details.
\end{remark}

\subsection{Tiling with Rectangles}
\label{subsec:rectangle_tiling}

Let $d>1$ and $\mathcal{R}$ be a finite collection of $d$-dimensional rectangles.
We denote as $\Pi^d_{\mathcal{R}}$ the problem of \emph{tiling $\mathbb{Z}^d$ with elements of $\mathcal{R}$}.
It is not fully understood when the LCL $\Pi^d_{\mathcal{R}}$ is in the class $\LOCAL(O(\log^* n))$ (or equivalently $\CONT$).
Currently the only known condition implying that $\Pi^d_{\mathcal{R}}$ is \emph{not} in $\LOCAL(O(\log^* n))$ is when GCD of the volumes of elements of $\mathcal{R}$ is not $1$.
The minimal unknown example is when $\mathcal{R}=\{2\times 2, 2\times 3,3\times 3\}$. 
For the other complexity classes the situation is different.
Given a collection $\mathcal{R}$ we define a condition \eqref{eq:GCD} as
\begin{equation}
\label{eq:GCD}\tag{$\dagger$}
    \text{GCD of side lengths in the direction of} \ i \text{-th generator is} \ 1 \ \text{for every} \ i\in [d].
\end{equation}
It follows from the same argument as for vertex $2$-coloring, that if $\mathcal{R}$ does not satisfy \eqref{eq:GCD}, then $\Pi^d_{\mathcal{R}}$ is a global problem, i.e., it is not in any  class from \cref{fig:big_picture_grids}.
On the other hand, the condition \eqref{eq:GCD} is sufficient for a spontaneous toast algorithm. For example, this implies $\Pi^2_{\{2\times 3,3\times 2\}}\in \BOREL \setminus \CONT$.
A version of the following theorem was announced in \cite{spencer_personal}, here we give a simple ``proof by picture''.

\begin{theorem}[\cite{spencer_personal, stephen}]
\label{thm:tilings}
Let $d>1$ and $\mathcal{R}$ be a finite collection of $d$-dimensional rectangles that satisfy \cref{eq:GCD}.
Then $\Pi^d_{\mathcal{R}}$ is in the class $\toast$.
Consequently, ${\bf M}(\Pi^d_{\mathcal{R}})\ge 1$.
\end{theorem}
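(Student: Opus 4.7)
The plan is to construct a spontaneous $\toast$ algorithm $\fA$ for $\Pi^d_{\mathcal{R}}$ with a constant threshold $q$ chosen large relative to parameters of $\mathcal{R}$. When presented with a piece $D$ of a $q$-toast together with already-tiled sub-pieces $E_1,\dots,E_m \subseteq D$ whose boundaries are pairwise $q$-separated, $\fA$ must extend these tilings to all of $D$ in a translation-invariant way. The strategy has two ingredients. First, fix a single rectangle $r^*\in\mathcal{R}$ and let $T^*$ denote a periodic tiling of $\mathbb{Z}^d$ by translates of $r^*$; the algorithm will use $T^*$ as a ``default'' on the bulk of $D$ and in a thin strip just inside $\partial D$, so that any future piece $D'\supset D$ can seamlessly continue it. Second, around each sub-piece $E_j$ we reserve an \emph{adapter frame} $F_j$ of constant width $w\ll q$ in which the given interior tiling of $E_j$ is reconciled with the surrounding $T^*$-pattern. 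By $q$-separation the frames are pairwise disjoint and lie inside $D$; the phase of $T^*$ is fixed canonically, either by the sub-piece tilings (at ranks $\ge 1$) or, at rank $0$, by a translation-invariant feature of the shape of $D$.

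The core combinatorial ingredient is, for each direction $i\in[d]$, a constant-size \emph{shift gadget} built from tiles of $\mathcal{R}$ that realises a $\pm 1$ translation of the $T^*$-pattern in direction $i$ while leaving the perpendicular directions periodic. Its existence is a direct application of B\'ezout's identity to \eqref{eq:GCD}: since the GCD of $i$-th side lengths across $\mathcal{R}$ equals $1$, one can assemble finitely many rectangles from $\mathcal{R}$ whose total $i$-th length differs from a row of $T^*$ by exactly one unit, producing the desired shift. Stacking at most a constant number of such gadgets inside $F_j$ realises any integer offset needed to paste the arbitrary interior tiling of $E_j$ onto the canonical bulk, which bounds $w$ by a function of $\mathcal{R}$ only. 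To avoid conflicts for very small pieces (e.g.\ a singleton sub-piece that commits to a ``virtual'' tile extending beyond itself), choosing $q$ larger than the maximum diameter of any $r\in\mathcal{R}$ guarantees that such virtual tiles never collide across pieces.

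\textbf{The main obstacle} is promoting the shift gadget from one to $d$ dimensions when $d \ge 3$: it must shift one coordinate while remaining compatible with the periodic $T^*$-pattern in the other $d-1$. I plan to handle this by induction on dimension, analogously to the extension in \cref{thm:edge toast} from $d=2$ to general $d$: a $(d-1)$-dimensional gadget is extruded along the $d$-th coordinate over a length that is a multiple of the $d$-th side of $r^*$, so that compatibility in that coordinate is automatic, and offsets in multiple directions are corrected one axis at a time. Once the adapter construction is in place, $\fA$ is a translation-invariant spontaneous $\toast$ algorithm, giving $\Pi^d_{\mathcal{R}}\in\toast$; the bound ${\bf M}(\Pi^d_{\mathcal{R}})\ge 1$ then follows directly from \cref{thm:toast_in_tail}.
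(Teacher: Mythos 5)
Your overall strategy is essentially the paper's: fix a periodic reference tiling, have every piece present that pattern (up to a phase) near its boundary, and use the GCD condition \eqref{eq:GCD} together with a B\'ezout/numerical-semigroup argument to build constant-size shift gadgets that reconcile each sub-piece's phase with the phase chosen by the enclosing piece; the bound ${\bf M}(\Pi^d_{\mathcal{R}})\ge 1$ then follows from \cref{thm:toast_in_tail} exactly as you say.

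There is, however, one step that fails as written: taking the reference pattern $T^*$ to be the periodic tiling by a \emph{single} rectangle $r^*$, and building the direction-$i$ shift gadget out of ``rows'' whose perpendicular extent is governed by $r^*$ alone. The gadget is a slab that must itself be tiled by members of $\mathcal{R}$, and its cross-section perpendicular to direction $i$ has to match the period of $T^*$; if that cross-section is given by the perpendicular side lengths of $r^*$, the slab need not be tileable at all. Concretely, for $d=2$, $\mathcal{R}=\{2\times 3,\,3\times 2\}$ and $r^*=2\times 3$: any $W\times 3$ block tiled by $\mathcal{R}$ has area $3W$ divisible by $6$, so $W$ must be even, and hence no gadget of height $3$ (or $9$, $15$, \dots) can ever realise a horizontal shift by one unit, even though \eqref{eq:GCD} holds. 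The same obstruction carries over to your extrusion step for $d\ge 3$, where you extrude only over a multiple of the $d$-th side of $r^*$. The repair is what the paper does: take the reference period to be $L$, the least common multiple of \emph{all} side lengths occurring in $\mathcal{R}$ (equivalently, replace $r^*$ by an $L\times\cdots\times L$ macro-tile, which is tileable by translates of any single $r\in\mathcal{R}$). A transition slab of cross-section $L^{d-1}$ and $i$-length $W$ then decomposes into sub-slabs of thickness equal to the $i$-th side of some $r\in\mathcal{R}$, each tileable by translates of that $r$ alone because $L$ is a multiple of every perpendicular side of $r$; choosing $W$ large, in the numerical semigroup generated by the $i$-th side lengths, and in the desired residue class mod $L$ yields the shift gadget. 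With that change the rest of your argument (adapter frames, the choice of $q$, the treatment of virtual tiles protruding from small pieces) goes through.
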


\begin{figure}
    \centering
    \includegraphics[width=.9\textwidth]{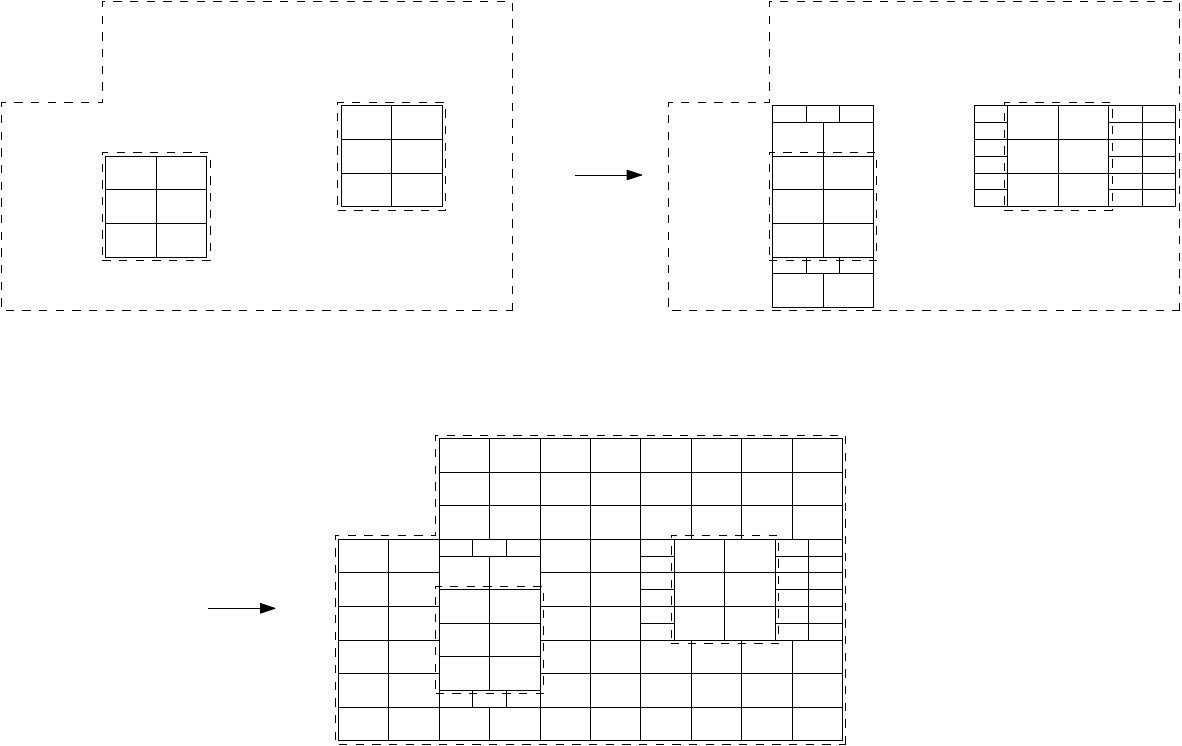}
    \caption{The $\toast$ algorithm is given small pieces of a toast that are already tiled. The first step is to match the parity of the tiling with respect to the bigger piece. Then tile the bigger piece.}
    \label{fig:Tiling}
\end{figure}

\begin{proof}[Proof Sketch]
The two-dimensional version of the construction is shown in \cref{fig:Tiling}. The interface of pieces this time is that they cover themselves with boxes of side length $L$ which is the least common multiple of all the lengths of basic boxes. As in the previous constructions, we just need to work out how to ``shift'' each piece so that all small pieces agree on the same tiling with boxes of side length $L$ which can then be extended to the whole piece. This can be done as in \cref{fig:Tiling}. 
\end{proof}

\begin{corollary}
Let $d=2$ and $\mathcal{R}$ be a finite collection of $2$-dimensional rectangles.
Then $\Pi^2_{\mathcal{R}}$ is either in the class $\LOCAL(O(\log^* n))$, satisfies ${\bf M}(\Pi^2_{\mathcal{R}})=1$, or is global (that is, in no class in \cref{fig:big_picture_grids}).
\end{corollary}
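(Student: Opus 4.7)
The plan is to split on whether $\mathcal{R}$ satisfies the GCD condition \eqref{eq:GCD}. If \eqref{eq:GCD} fails, then as observed in the paragraph preceding Theorem~\ref{thm:tilings}, the same parity/volume obstruction that rules out vertex $2$-coloring on arbitrarily large tori shows that $\Pi^2_{\mathcal{R}}$ cannot be solved by any sublinear local algorithm, so $\Pi^2_{\mathcal{R}}$ is global and we are in the third option of the trichotomy.

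Now suppose that \eqref{eq:GCD} holds. Theorem~\ref{thm:tilings} places $\Pi^2_{\mathcal{R}} \in \toast$, and Theorem~\ref{thm:toast_in_tail} then yields an uniform local algorithm of uniform complexity $(1/\eps)^{1+o(1)}$, giving the lower bound ${\bf M}(\Pi^2_{\mathcal{R}}) \ge 1$. Thus either $\Pi^2_{\mathcal{R}} \in \LOCAL(O(\log^* n))$, landing us in the first option, or else we need to show that no $\alpha$-moment with $\alpha > 1$ can be finite. Assume therefore that $\Pi^2_{\mathcal{R}} \notin \LOCAL(O(\log^* n))$, and suppose for contradiction that some uniform local algorithm $\fA$ solving $\Pi^2_{\mathcal{R}}$ has finite $\alpha$-moment for some $\alpha > 1$. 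Fact~\ref{fact:moment_implies_tail} gives $\fA$ uniform complexity $O((1/\eps)^{1/\alpha})$. Since $d-1 = 1$ and $1/\alpha < 1$, we have $(1/\eps)^{1/\alpha} \le C_{2,t}/\eps = C_{2,t}\sqrt[d-1]{1/\eps}$ for all sufficiently small $\eps$, so $\Pi^2_{\mathcal{R}} \in \olocal(C_{2,t}\sqrt[d-1]{1/\eps})$. The speedup Theorem~\ref{thm:grid_speedup} collapses this to $\olocal(O(\log^*(1/\eps)))$, which by Corollary~\ref{cor:local=tower} is $\LOCAL(O(\log^* n))$, contradicting our assumption. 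Hence ${\bf M}(\Pi^2_{\mathcal{R}}) = 1$ and we are in the second option.

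The proof is really a packaging exercise combining Theorem~\ref{thm:tilings}, Theorem~\ref{thm:toast_in_tail}, Theorem~\ref{thm:grid_speedup} and Fact~\ref{fact:moment_implies_tail}. The only place where $d=2$ is genuinely used is in matching the speedup threshold $\sqrt[d-1]{1/\eps}$ with the exponent coming from toast: for $d=2$ both reduce to $1/\eps$, so ``any moment strictly greater than $1$'' automatically puts the problem below the speedup threshold. This is exactly the obstacle to extending the clean trichotomy to higher $d$, where Theorem~\ref{thm:grid_speedup} only gives the weaker conclusion ${\bf M}(\Pi^d_{\mathcal{R}}) \le d-1$ and the intermediate moments constructed in \cref{sec:Inter} show that such a trichotomy indeed fails for $d > 2$.
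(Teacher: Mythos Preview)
Your proof is correct and matches the paper's intended argument: the corollary is stated without proof, but the paper explicitly notes just before it that ``if an LCL $\Pi$ is in the class $\toast\setminus \local(O(\log^*(n)))$, then $1\le {\bf M}(\Pi)\le d-1$ by \cref{thm:toast_in_tail} and \cref{thm:grid_speedup},'' and you unpack precisely this for $d=2$ together with the observation that failure of \eqref{eq:GCD} forces globality.

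One small inaccuracy in your closing commentary: the intermediate problems of \cref{sec:Inter} are not tiling problems, so they do not literally show that the trichotomy fails for $\Pi^d_{\mathcal{R}}$ when $d>2$; indeed the paper explicitly leaves open whether any $\mathcal{R}$ satisfies $1<{\bf M}(\Pi^d_{\mathcal{R}})\le d-1$. What \cref{sec:Inter} shows is only that the general $\toast$ versus speedup argument cannot by itself yield a trichotomy for $d>2$.
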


For any $d$, \cref{thm:tilings} implies that if a tiling problem is ``hard'' (in no class in \cref{fig:big_picture_grids}), it is because of a simple reason: a projection to one dimension gives a periodic problem. It would be interesting to understand whether there are problems that are hard but their solutions do not have any periodicity in them (see \cref{sec:open_problems}). 

We do not know if there is $\mathcal{R}$ that would satisfy $1< {\bf M}(\Pi^d_{\mathcal{R}})\le d-1$.
Let $d\ge \ell>0$ and denote as $\mathcal{R}_\ell$ the restriction of rectangles to first $\ell$-dimensions.
As in the case of proper $3$-vertex coloring problem, we have that ${\bf M}(\Pi^d_{\mathcal{R}})\le {\bf M}(\Pi^\ell_{\mathcal{R}_{\ell}})$.
In particular, if, e.g., $\Pi^2_{\mathcal{R}_2}$ is not in the class $\LOCAL(O(\log^* n))$, then ${\bf M}(\Pi^d_{\mathcal{R}})\le 1$.

\begin{remark}
In the context of non-oriented grids, we need to assume that $\mathcal{R}$ is invariant under swapping coordinates. 
Similar argument, then shows that $\Pi^d_\mathcal{R}\in \toast$ if and only if $\mathcal{R}$ satisfies \eqref{eq:GCD}.
Combined with the fact that there is a Borel toast on such Borel graphs (see~\cite{AsymptoticDim}), we deduce that \eqref{eq:GCD} for $\mathcal{R}$ is equivalent to the existence of Borel tiling with rectangles from $\mathcal{R}$.
\end{remark}

\begin{remark}
It follows easily from \eqref{eq:GCD} that if $|\mathcal{R}|=1$, then $\Pi^d_\mathcal{R}$ is global unless $\mathcal{R}=\{1\times 1\times \dots \times 1\}$.
More generally, the authors together with Greenfeld and Tao \cite{MeasurableTiling} showed that the same is true if we consider a tiling problem in $\mathbb{Z}^d$ with one tile that is not necessarily rectangle or connected.
Combination of this result together with the recent counterexample to the Periodic Tiling Conjecture that was found by Greenfeld and Tao \cite{FailurePTC} implies the existence of a solvable LCL problem (for $d$ large enough) that is global (it is not in any  class from \cref{fig:big_picture_grids}), yet all of its solutions are not periodic.
In another words, the reason why this LCL problem is global is different from, say, $2$-vertex coloring or (global) rectangular tilings.
\end{remark}


\paragraph{Perfect matching}

Let $d>1$.
A special case of a tiling problem is the \emph{perfect matching problem} $\Pi^d_{pm}$.
Since there are arbitrarily large tori of odd size we conclude that $\Pi^d_{pm}\not \in \LOCAL(O(\log^* n))$.
Consequently, $\Pi^d_{pm}\not \in \olocal(O(\log^* 1/\eps))$ by \cref{thm:tail=uniform}, and it follows from the Twelve Tile Theorem of Gao, Jackson, Krohne and Seward \cite[Theorem 5.5]{GJKS} that $\Pi^d_{pm}\not \in \CONT$.
Similarly, as with the vertex $3$-coloring problem, the proof of $\Pi^d_{pm}\in \BOREL$, where $d>1$, from \cite{GJKS2} uses the toast construction and, in our notation, yields the following result. 

\begin{theorem}[\cite{GJKS2}]\label{thm:perfect_matching_intoast}
Let $d>1$.
The LCL $\Pi^d_{pm}$ is in the class $\toast$.
\end{theorem}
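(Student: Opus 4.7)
The plan is to observe that $\Pi^d_{pm}$ is a special case of the rectangle tiling problem already handled in \cref{thm:tilings}. Concretely, a perfect matching of $\mathbb{Z}^d$ is nothing other than a tiling of $\mathbb{Z}^d$ by \emph{dominoes}, i.e.\ by rectangles from the collection $\mathcal{R}=\{R_1,\dots,R_d\}$, where $R_i$ has side length $2$ in the direction of the $i$-th generator and side length $1$ in every other direction. Under this identification a $\Pi^d_{\mathcal{R}}$-coloring and a $\Pi^d_{pm}$-coloring are the same object, so it suffices to put this particular $\mathcal{R}$ into the hypothesis of \cref{thm:tilings}.

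First I would check that this $\mathcal{R}$ satisfies the condition \eqref{eq:GCD}. Fix a direction $i\in[d]$. The side lengths in direction $i$ that actually appear among the rectangles $R_1,\dots,R_d$ are $2$ (contributed by $R_i$) and $1$ (contributed by each $R_j$ with $j\neq i$, of which there is at least one since $d>1$). Hence the GCD in direction $i$ is $\gcd(2,1)=1$, and this holds in every coordinate direction. So the hypothesis of \cref{thm:tilings} is met.

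Applying \cref{thm:tilings} then gives a $q\in\mathbb{N}$ and a spontaneous $\toast$ algorithm $\fA$ that produces a valid tiling by elements of $\mathcal{R}$ on any $q$-toast $\mathcal{D}$. Reinterpreting each placed domino as the matching edge between its two vertices yields a perfect matching of $\mathbb{Z}^d$, which is exactly a $\Pi^d_{pm}$-coloring. This shows $\Pi^d_{pm}\in\toast$.

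The only thing to keep in mind is that the reduction really is to \cref{thm:tilings} and not to, say, a $2$-dimensional slice: a perfect matching of $\mathbb{Z}^d$ cannot be obtained simply by matching within each $2$-dimensional hyperplane independently, because our $\toast$ algorithm is defined in the full $d$-dimensional ambient grid. Using all $d$ orientations of dominoes is also what makes \eqref{eq:GCD} hold in every direction, and this is the place where the assumption $d>1$ enters. Should one want a self-contained ``proof by picture'' in the spirit of \cref{fig:Tiling} rather than a black-box citation, the main (and only) obstacle would be the usual parity adjustment when joining several already-matched smaller pieces $E_1,\dots,E_k\subseteq D$ into one bigger piece $D$: one chooses a distinguished orientation (say, horizontal dominoes) as the interface, and whenever the union $D\setminus\bigcup E_j$ has a parity defect along some axis, one reorients a short chain of dominoes adjacent to the boundary of a problematic $E_j$ (which is possible precisely because $d>1$ gives an alternative axis to absorb the defect), exactly as in the proof sketch of \cref{thm:tilings}.
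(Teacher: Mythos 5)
Your proposal is correct. Note that the paper does not give a self-contained argument here at all: it observes that $\Pi^d_{pm}$ is a special case of a tiling problem and then attributes the theorem to the direct toast construction for perfect matchings in \cite{GJKS2}. Your route instead makes the theorem an honest corollary of \cref{thm:tilings}: the domino collection $\mathcal{R}=\{R_1,\dots,R_d\}$ realizes $\Pi^d_{pm}$ as $\Pi^d_{\mathcal{R}}$ up to a radius-$1$ relabeling of outputs (under which membership in $\toast$ is clearly preserved), and your verification of \eqref{eq:GCD} is exactly right, including the observation that the presence of some $R_j$ with $j\neq i$ — i.e.\ the hypothesis $d>1$ — is what forces the side length $1$ to appear in every direction. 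This buys a cleaner logical dependence (one fewer external citation) at the cost of inheriting whatever is only sketched in the proof of \cref{thm:tilings}; your closing remark about the parity-fixing interface is the right picture of what a fully self-contained version would require.
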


\begin{corollary}
Let $d>1$.
Then ${\bf M}(\Pi^d_{pm})\ge 1$ and every ffiid $F$ that solves $\Pi^d_{pm}$ has the $(d-1)$-moment infinite.
In particular, ${\bf M}(\Pi^2_{pm})=1$.
\end{corollary}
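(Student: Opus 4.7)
The plan is to prove the two bounds on $\mathbf{M}(\Pi^d_{pm})$ separately (reading the middle clause, consistent with the surrounding material and the concluding sentence, as a statement about $\Pi^d_{pm}$ rather than $\Pi^d_{\chi,3}$).

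For the lower bound $\mathbf{M}(\Pi^d_{pm}) \geq 1$, I will invoke \cref{thm:perfect_matching_intoast}, which places $\Pi^d_{pm}$ in the class $\toast$. Applying \cref{thm:toast_in_tail} then yields an uniform local algorithm whose uniform complexity is $(1/\eps)^{1+o(1)}$. In particular, for every $\alpha < 1$ this ffiid has finite $\alpha$-moment by a direct computation on the tail, so $\mathbf{M}(\Pi^d_{pm}) \geq 1$.

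For the upper bound, the plan is to derive a contradiction from the assumption that some ffiid $F$ solving $\Pi^d_{pm}$ has finite $(d-1)$-moment. The key elementary observation is that finiteness of $\mathbb{E}[R_F^{d-1}]$ gives a sharper tail bound than Markov, namely
\[
r^{d-1}\,\mathbb{P}(R_F > r) \;\leq\; \sum_{s > r} s^{d-1}\,\mathbb{P}(R_F = s) \;\longrightarrow\; 0 \quad\text{as } r \to \infty.
\]
Hence for any constant $c > 0$ (in particular for $c < 1/C_{d,t}$, with $C_{d,t}$ the constant from \cref{thm:grid_speedup}) and all sufficiently large $r$, the inequality $\mathbb{P}(R_F > r) \leq c / r^{d-1}$ holds. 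This is exactly hypothesis \eqref{eq:tail} of the speedup theorem, so $\Pi^d_{pm} \in \olocal(O(\log^* 1/\eps))$, and by \cref{cor:local=tower} also $\Pi^d_{pm} \in \local(O(\log^* n))$.

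To close the argument I will invoke the parity obstruction already mentioned before \cref{thm:edge toast}: arbitrarily large $d$-dimensional tori can have an odd number of vertices, which admit no perfect matching at all, so no local algorithm of complexity $o(n^{1/d})$ can solve $\Pi^d_{pm}$ on every admissible torus. In particular, $\Pi^d_{pm} \notin \local(O(\log^* n))$, contradicting the conclusion of the speedup. Therefore every ffiid solving $\Pi^d_{pm}$ must have infinite $(d-1)$-moment. Specializing to $d = 2$ we get $\mathbf{M}(\Pi^2_{pm}) \leq 1$, which combined with the lower bound yields $\mathbf{M}(\Pi^2_{pm}) = 1$.

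The only mildly delicate point, and the main thing to watch out for, is the passage from finiteness of $\mathbb{E}[R_F^{d-1}]$ to a tail bound with an \emph{arbitrarily small} constant $c$ (rather than just the Markov bound $M / r^{d-1}$ with the moment $M$). This is essential because \cref{thm:grid_speedup} requires the specific constant $C_{d,t}$ in its hypothesis, and Markov alone would not suffice if $M$ happens to be large.
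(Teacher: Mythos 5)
Your proposal is correct and follows essentially the same route the paper intends (it mirrors the proof of the analogous corollary for $\Pi^d_{\chi,3}$): the lower bound from \cref{thm:perfect_matching_intoast} plus \cref{thm:toast_in_tail}, and the upper bound from \cref{thm:grid_speedup} combined with the odd-torus obstruction to $\Pi^d_{pm}\in\local(O(\log^* n))$. Your "delicate point" — that a finite $(d-1)$-moment yields a tail bound $o(1/r^{d-1})$ with an arbitrarily small constant, not merely the Markov bound — is exactly the content of \cref{fact:moment_implies_tail}, and you are also right that the $\Pi^d_{\chi,3}$ in the middle clause is a typo for $\Pi^d_{pm}$.
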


We leave it as an open problem to determine the exact value of ${\bf M}(\Pi^d_{pm})$, where $d>2$.
In fact, this problem is similar to intermediate problems that we study in \cref{sec:Inter}.

\renewcommand{\pm}[2]{\operatorname{PM}^{#1, #2}_\boxtimes}
\renewcommand{\pm}[1]{\operatorname{PM}^{#1}_\boxtimes}
\newcommand{\Zbox}{\mathbb{Z}^d_\boxtimes}

\section{Intermediate Problems}\label{sec:Inter}

In this section, we construct local problems with uniform local complexities $\sqrt[j-o(1)]{1/\eps}$ for $1 \le j \le d-1$.
In particular, these problems will satisfy ${\bf M}(\Pi)=j$. 
We first define the problem $\pm{d}$: this is just a perfect matching problem but in the graph $\Zbox$ where two different nodes $u = (u_1, \dots, u_d)$ and $v = (v_1, \dots, v_d)$ are connected with an edge if $\sum_{1 \le i \le d} |u_i - v_i| \le 2$. That is, we also add some ``diagonal edges'' to the grid. We do not know whether perfect matching on the original graph (Cayley graph of $\mathbb{Z}^d$) is also an intermediate problem (that is, not $\local$ but solvable faster than $\toast$). 

We will show that the uniform local complexity of $\pm{d}$ is $\sqrt[d - 1 -o(1)]{1/\eps}$. 
After this is shown, the hierarchy of intermediate problems is constructed as follows. 
Consider the problem $\pm{d,\ell}$. This is a problem on $\mathbb{Z}^d$ where one is asked to solve $\pm{d}$ in every hyperplane $\mathbb{Z}^\ell$ given by the first $\ell$ coordinates in the input graph. 
The $\olocal$ complexity of $\pm{d,\ell}$ is clearly the same as the complexity of $\pm{\ell}$, that is, $\sqrt[\ell - 1 -o(1)]{1/\eps}$. 
Hence, we can focus just on the problem $\pm{d}$ from now on. First, we claim there is no nontrivial local algorithm for this problem. 

\begin{claim}\label{cl:PM is global}
Let $2\le d\in \mathbb{N}$.
Then the local coloring problem $\pm{d}$ is not in the class $\LOCAL(O(\log^* n))$.
In particular, by \cref{thm:grid_speedup}, $\pm{d} \not\in \olocal(o(\sqrt[d-1]{1/\eps}))$. 
\end{claim}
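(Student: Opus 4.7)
The plan is to mimic the classical obstruction that rules out perfect matching as a local problem on ordinary grids, but now verifying that the added diagonal edges of $\Zbox$ do not change anything. The key observation will be that there exist arbitrarily large finite graphs that locally look like $\Zbox$ and have an odd number of vertices, so admit no perfect matching at all.

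Concretely, I would assume for contradiction that $\pm{d} \in \LOCAL(O(\log^* n))$, witnessed by some deterministic local algorithm $\mathcal{A}$ with local complexity $f(n) = O(\log^* n)$. For every odd integer $m \in \mathbb{N}$, consider the $d$-dimensional torus $T_m$ of side length $m$, endowed with the edge set obtained by taking the Cayley edges of $(\mathbb{Z}/m\mathbb{Z})^d$ together with the analogous diagonal edges (any two distinct vertices at $\ell_1$-distance at most $2$). Provided $m > 2f(m^d) + 10$, the $f(m^d)$-hop neighborhood of every vertex of $T_m$ is isomorphic as a rooted labeled graph to the corresponding neighborhood in $\Zbox$, so $T_m$ belongs to the class of inputs on which $\mathcal{A}$ must succeed. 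Since $\log^*$ grows slowly enough, for all large odd $m$ this condition is satisfied.

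Now $T_m$ has $m^d$ vertices, which is odd, so no perfect matching of $T_m$ exists. On the other hand, after assigning unique identifiers from $[m^d]^{O(1)}$ to the vertices of $T_m$, the algorithm $\mathcal{A}$ outputs, at each vertex $v$, a pointer to one of its neighbors, and these outputs must by assumption form a perfect matching of $T_m$. This is a contradiction, so $\pm{d} \notin \LOCAL(O(\log^* n))$.

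For the ``in particular'' part, suppose $\pm{d} \in \olocal(o(\sqrt[d-1]{1/\eps}))$. By the speedup \cref{thm:grid_speedup} (applied with any sufficiently large $t$), we would then get $\pm{d} \in \olocal(O(\log^* 1/\eps))$, and by \cref{cor:local=tower} this would imply $\pm{d} \in \LOCAL(O(\log^* n))$, contradicting what we just proved. The only subtlety to double-check in the main step is the compatibility of the local view in $T_m$ with that in the infinite $\Zbox$, that is, that the diagonal edges close up correctly in the torus, which is immediate once $m$ is larger than twice the radius of the neighborhood inspected.
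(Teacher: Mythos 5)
Your proposal is correct and takes essentially the same approach as the paper: the paper's one-line justification is exactly that a local algorithm would yield a perfect matching on every large enough torus, which fails for tori with an odd number of vertices, and the ``in particular'' part follows from \cref{thm:grid_speedup} together with \cref{cor:local=tower} just as you describe. Your write-up simply fills in the routine details (odd side length, neighborhood indistinguishability for $m$ large relative to $f(m^d)$).
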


This follows from the fact that if there is a local algorithm, then in particular there exists a solution on every large enough torus. But this is not the case if the torus has an odd number of vertices. 

Second, we need to show how to solve the problem $\pm{d}$ faster than with a $\toast$ construction algorithm. 

\begin{restatable}{theorem}{intermediate}
\label{thm:intermediate}
The problem $\pm{d}$ can be solved with a uniform local algorithm of uniform complexity $\sqrt[d-1]{1/\eps} \cdot 2^{O(\sqrt{\log 1/\eps})}$. 
\end{restatable}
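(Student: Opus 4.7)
The plan is to refine the hierarchical Voronoi construction from Theorem~\ref{thm:toast_in_tail} so that, at each level, we match a larger fraction of vertices by exploiting the diagonal edges of $\Zbox$. The overall architecture---MIS in $G^{r_k}$ with $r_k = 2^{ck^2}$, Voronoi cells, shrinkage by $t_k$ hops---remains the same; the improvement will come from a stronger per-level matching guarantee.

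The key combinatorial ingredient is the following claim: for each cell $C$ of side approximately $r_k$ obtained from the Voronoi construction, we can find a matching of $\Zbox$ restricted to $C$ that leaves unmatched only $O(r_k)$ vertices, all lying near the $1$-dimensional skeleton of $C$'s boundary (that is, near the $(d-1)$-codimensional corners where many bounding hyperplanes meet). Since $|C| \approx r_k^d$, this corresponds to an unmatched fraction of $O(1/r_k^{d-1})$. To prove this claim, slice $C$ into thickness-$2$ slabs of the form $R_i \times \{a_i, a_i+1\}$ perpendicular to a fixed coordinate direction; in any cylindrical such slab one matches $(x, a_i)$ with $(x, a_i+1)$ via direct axis-parallel edges, achieving a perfect matching of the slab. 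The only vertices left unmatched are those in the non-cylindrical portions of the decomposition, which lie in a neighborhood of the $(d-1)$-dim boundary of $C$. An induction on dimension---using the diagonal edges of $\Zbox$ inside $2$-dim hyperplanes to absorb leftover vertices one slab deeper each iteration---reduces this leftover to a neighborhood of the $(d-2)$-dim skeleton, then of the $(d-3)$-dim skeleton, and so on, terminating at the $1$-dim skeleton of total volume $O(r_k)$.

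Having established this improved per-level matching, the remainder of the argument follows the proof of Theorem~\ref{thm:toast_in_tail}. At each level $k$, apply the combinatorial construction inside every cell, matching all but the $1$-skeleton vertices; vertices unmatched at level $k$ are deferred to level $k+1$ and re-processed at the coarser scale. The probability that a given vertex is still unmatched after $k$ levels is bounded by $O(A^k / r_k^{d-1})$ (the factor $r_k^{d-1}$ replacing the factor $r_k$ from Theorem~\ref{thm:toast_in_tail}). Setting $\eps = O(A^k / r_k^{d-1})$ and solving as in Theorem~\ref{thm:toast_in_tail} gives $r_k = (1/\eps)^{1/(d-1)} \cdot 2^{O(k)}$ with $k = O(\sqrt{\log(1/\eps)})$, yielding the claimed uniform complexity $\sqrt[d-1]{1/\eps} \cdot 2^{O(\sqrt{\log 1/\eps})}$.

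The main obstacle will be ensuring compatibility of matchings across adjacent cells and across levels of the hierarchy. Specifically, we must arrange that the construction inside a level-$k$ cell does not conflict with earlier matchings (for instance, of deferred $1$-skeleton vertices from level $k-1$ now absorbed into a level-$k$ cell) and that the overall set of unmatched vertices at the end of level $k$ forms a valid input to level $k+1$. This is handled by a parity-fixing step near cell boundaries, using the diagonal edges of $\Zbox$ to shift parity between adjacent slabs whenever the cylindrical slicing yields a slab of odd size; the extra flexibility of these diagonal edges, unavailable in the plain Cayley graph of $\mathbb{Z}^d$, is precisely what enables the improvement over the $O((1/\eps)^{1+o(1)})$ bound furnished by $\toast$.
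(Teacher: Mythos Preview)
Your high-level picture is right: the gain over the generic $\toast$ bound comes from passing to a $1$-dimensional skeleton, whose density is $1/r_k^{d-1}$ rather than $1/r_k$. But the proposal has a real gap at levels $k\ge 2$. Your combinatorial claim is about matching inside a full $d$-dimensional cell via slab slicing; this covers the first level (going from $\mathbb{Z}^d$ to the $1$-skeleton of the level-$1$ Voronoi tessellation). After that, however, the set of still-unmatched vertices is already a $1$-dimensional network sitting inside a level-$(k{+}1)$ cell, and your slab argument says nothing about matching in such an object. The sentence ``vertices unmatched at level $k$ are deferred to level $k+1$ and re-processed at the coarser scale'' hides exactly the missing lemma: how do you match almost all of a level-$k$ $1$-skeleton so that what remains lies near a level-$(k{+}1)$ $1$-skeleton, and how do you guarantee that the leftover pieces are in a controlled position relative to the next scale? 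Your ``parity-fixing step near cell boundaries'' does not address this; the issue is structural, not a parity tweak.

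The paper sidesteps the slab/dimension-induction entirely. Its combinatorial input is the much stronger (and simpler) statement that \emph{any} finite set $A\subseteq\mathbb{Z}^d$ that is connected in the Cayley graph of $\mathbb{Z}^d$ has a perfect matching in $\Zbox$ whenever $|A|$ is even, and otherwise can be matched up to a single prescribed cut-vertex-free point; this is proved by a short spanning-tree/leaf argument using the diagonal edges. With this in hand, the hierarchy is built not by nested Voronoi cells but by nested $1$-skeletons $\mathcal{C}_1\supseteq\mathcal{C}_2\supseteq\cdots$: one starts from the $1$-skeleton of a box tiling, and $\mathcal{C}_{k+1}$ is obtained by joining nearby points of a ruling set $X_{k+1}\subseteq X_k$ via shortest paths \emph{inside} $\mathcal{C}_k$. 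An orientation $\Omega$ that pushes every vertex of $\mathcal{C}_k\setminus\mathcal{C}_{k+1}$ toward $\mathcal{C}_{k+1}$ breaks $\mathcal{C}_k\setminus\mathcal{C}_{k+1}$ into finite connected components, each of which is matched by the lemma leaving at most one \emph{pivot} adjacent to $\mathcal{C}_{k+1}$; that pivot is absorbed at the next level. The $2^{O(k)}$ loss you wrote as $A^k$ then arises for a concrete reason you never invoke: the shortest-path embedding stretches distances inside $\mathcal{C}_k$ by a constant factor per level, so $\P(\mathcal{C}_k)\le 2^{O(k)}/r_k^{d-1}$. Once you have this clean lemma and the nested-skeleton mechanism, the compatibility issues you flag simply do not appear.
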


Here we give only a proof sketch, the formal proof is deferred to \cref{app:intermediate_problems}. 

\begin{proof}[Proof Sketch]
We only discuss the case $d=3$ (and the actual proof differs a bit from this exposition due to technical reasons). As usual, start by tiling the grid with boxes of side length roughly $r_0$. 
The \emph{skeleton} $\fC_0$ is the set of nodes we get as the union of the $12$ edges of each box. 
Its \emph{corner points} $X_0 \subseteq \fC_0$ is the union of the $8$ corners of each box.

We start by matching vertices inside each box that are not in the skeleton $\fC_0$. It is easy to see that we can match all inside vertices, maybe except for one node per box that we call a \emph{pivot}. We can ensure that any pivot is a neighbor of a vertex from $\fC_0$. 

We then proceed similarly to the toast construction of \cref{thm:toast_in_tail} and build a sequence of sparser and sparser independent sets that induce sparser and sparser subskeletons of the original skeleton.
More concretely, we choose a sequence $r_k = 2^{O(k^2)}$ and in the $k$-th step we construct a new set of corner points $X_k \subseteq X_{k-1}$ such that any two corner points are at least $r_k$ hops apart but any point of $\mathbb{Z}^d$ has a corner point at distance at most $2r_k$ (in the metric induced by $\mathbb{Z}^d$). 
This set of corner points $X_k$ induces a subskeleton $\fC_{k} \subseteq \fC_{k-1}$ by connecting nearby points of $X_k$ with shortest paths between them in $\fC_{k-1}$. 

After constructing $\fC_k$, the algorithm solves the matching problem on all nodes of $\fC_{k-1} \setminus \fC_k$. This is done by orienting those nodes towards the closest $\fC_k$ node in the metric induced by the subgraph $\fC_k$. 
Then, the matching is found on connected components that have a radius bounded by $r_k$. There is perhaps only one point in each component, a pivot, that neighbors with $\fC_{k}$ and remains unmatched. 

After the $k$-th step of this algorithm, only nodes in $\fC_k$ (and a few pivots) remain unmatched. Hence, it remains to argue that only roughly a $1/r_k^{d-1}$ fraction of nodes survives to $\fC_k$. This should be very intuitive -- think about the skeleton $\fC_k$ as roughly looking as the skeleton generated by boxes of radius $r_k$. This is in fact imprecise, because in each step the distance between corner points of $X_k$ in $\fC_k$ may be stretched by a constant factor with respect to their distance in the original skeleton $\fC_{k-1}$. This is the reason why the final volume of $\fC_k$ is bounded only by $2^{O(k)} / r_k^{d-1}$ and by a similar computation as in \cref{thm:toast_in_tail} we compute that the uniform local complexity is $\sqrt[d-1]{1/\eps} \cdot 2^{O(\sqrt{\log 1/\eps})}$. 
\end{proof}

\section{Other Classes of Problems}\label{sec:OtherClasses}

In this mostly expository section, we discuss other classes of problems from \cref{fig:big_picture_grids} and how they relate to our results from previous sections.

\subsection{$\CONT$}\label{subsec:CONT}

Here we explain the fact that an LCL $\Pi$ on $\mathbb{Z}^d$ is in the class $\local(O(\log^*(n)))$ if and only if it is in the class $\cont$.
For $\mathbb{Z}^d$, the model $\cont$ was extensively studied by Gao, Jackson, Krohne, Tucker-Drob, Seward, and others, see \cite{GaoJackson,GJKS,ST,ColorinSeward}.
In fact, the connection with local algorithms is implicit in \cite{GJKS} under the name Twelve Tiles Theorem \cite[Theorem~5.5]{GJKS}.
Recently, independently Bernshteyn \cite{Bernshteyn2021local=cont} and Seward \cite{Seward_personal} showed that the same holds in bigger generality for all Cayley graphs of finitely generated groups.
Our aim here is to formally introduce the model $\CONT$ and give an informal overview of the argument for $\mathbb{Z}^d$ that follows the lines of Gao, Jackson, Krohne, and Seward \cite{GJKS}.
The reasons not to simply put a reference to the aforementioned papers are two.
First, we work with slightly more general LCLs that have input.
Second, our goal is to introduce all classes in a way accessible to a broader audience. 

The setup of the $\CONT$ model is somewhat similar to the deterministic $\local$ model: there is no randomness in the definition and there is a certain symmetry-breaking input on the nodes. Specifically, each vertex is labeled with one label from the set $\{0, 1\}$; the space of all such labelings is denoted as $2^{\mathbb{Z}^d}$. 
However, for the same reason as why we insist on the uniqueness of the identifiers in the $\local$ model, we insist that the input $\{0,1\}$ labeling breaks all symmetries. Namely, we require that there is no $g\in \mathbb{Z}^d$ such that shifting all elements by $g$ preserves the labeling of $\mathbb{Z}^d$. We write $\operatorname{Free}(2^{\mathbb{Z}^d}) \subseteq 2^{\mathbb{Z}^d}$ for this subspace. 
A \emph{continuous algorithm} is a pair $\mathcal{A}=(\fC,f)$, where $\fC$ is a collection of $\{0,1\} \times \Sigma_{in}$-labeled neighborhoods and $f$ is a function that assigns to each element of $\fC$ an output label. It needs to satisfy the following. Suppose we fix a node $v$, a $\{0,1\}$-labeling $\Lambda$ of $\mathbb{Z}^d$ that satisfies $\Lambda \in \operatorname{Free}(2^{\mathbb{Z}^d})$ as well as an input $\Sigma_{in}$-labeling of $\mathbb{Z}^d$.
If we let $v$ explore its neighborhood, then it encounters an element from $\fC$ after finitely many steps. 
We write $\mathcal{A}(\Lambda)(v)$, or simply $\mathcal{A}(v)$, when $\Lambda$ is understood, for the corresponding output.

\begin{remark}
This is equivalent to the definition given in e.g. \cite{GJKS} in the same way as the correspondence between uniform local algorithms and ffiid, see \cref{subsec:subshifts}.
The fact that the collection $\fC$ is dense, i.e., every node finishes exploring its neighborhoods in finite time, means that $\mathcal{A}({\bf 0})$ is \emph{continuous} with respect to the product topology and defined on the whole space $\operatorname{Free}(2^{\mathbb{Z}^d})$.
Moreover, the value computed at $v$ does not depend on the position of $v\in \mathbb{Z}^d$ but only on the isomorphism type of the labels, this is the same as saying that $\mathcal{A}(\_)$ is \emph{equivariant}.
\end{remark}

We say that an LCL $\Pi$ is in the class $\cont$ if there is a continuous algorithm $\fA$ that outputs a $\Pi$-coloring for every $\Lambda \in \operatorname{Free}(2^{\mathbb{Z}^d})$.
Note that, in general, we cannot hope for the diameters of the neighborhoods in $\fC$ to be uniformly bounded.
In fact, if that happens then $\Pi \in \local(O(1))$. 
This is because there are elements in $\operatorname{Free}(2^{\mathbb{Z}^d})$ with arbitrarily large regions with constant $0$ label.
Applying $\mathcal{A}$ on that regions gives an output label that depends only on $\Sigma_{in}$ after a bounded number of steps, hence we get a local algorithm with constant local complexity.

The main idea to connect this context with distributed computing is to search for a labeling in $\operatorname{Free}(2^{\mathbb{Z}^d})$ where any $\fA$ necessarily has a uniformly bounded diameter.
Such labeling does exist!
In \cite{GJKS} these labelings are called \emph{hyperaperiodic}.
To show that hyperaperiodic labelings exist on $\mathbb{Z}^d$ is not that difficult, unlike in the general case for all countable groups \cite{ColorinSeward}. 
In fact, there is enough flexibility to encode in a given hyperaperiodic labeling some additional structure in such a way that the new labeling remains hyperaperiodic.

We now give a definition of a hyperaperiodic labeling. 
Let $\Lambda \in 2^{\mathbb{Z}^d}$ be a labeling of $\mathbb{Z}^d$.
We say that $\Lambda$ is \emph{hyperaperiodic} if for every $g\in \mathbb{Z}^d$ there is a finite set $S\subseteq \mathbb{Z}^d$ such that 
for every $t\in \mathbb{Z}^d$ we have 
$$\Lambda(t+S) \not= \Lambda(g+t+S)$$
This notation means that the $\Lambda$-labeling of $S$ differs from the $\Lambda$-labeling of $S$ shifted by $g$ everywhere. 
In other words, given some translation $g\in \mathbb{Z}^d$, there is a uniform witness $S$ for the fact that $\Lambda$ is nowhere $g$-periodic.
It follows from a standard compactness argument that any continuous algorithm $\fA$ used on a hyperaperiodic labeling $\Lambda$ finishes after a  number of steps that is uniformly bounded by some constant that depends only on $\fA$ and $\Lambda$.

\begin{theorem}[\cite{GJKS,Bernshteyn2021local=cont}]
Let $d>0$.
Then an LCL $\Pi$ on $\mathbb{Z}^d$ is in the class $\CONT$ if and only if it is in the class $\local(O(\log^*(n)))$.
\end{theorem}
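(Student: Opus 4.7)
The strategy is to prove the two implications separately, both leveraging two facts highlighted earlier: the existence of a hyperaperiodic labeling $\Lambda \in \operatorname{Free}(2^{\mathbb{Z}^d})$ on $\mathbb{Z}^d$ (an elementary combinatorial construction on $\mathbb{Z}^d$), and the compactness bound that any continuous algorithm $\mathcal{A}$ applied to such a $\Lambda$ finishes within a uniform radius $R_\mathcal{A} < \infty$ at every vertex, so $\mathcal{A}(\Lambda)(v)$ is determined by $\Lambda\upharpoonright B(v,R_\mathcal{A})$. On the distributed side, the reduction provided by \cref{rem:chang_pettie_decomposition} -- any $\local(O(\log^* n))$ problem is ``compute a MIS of a sufficiently high power graph, then apply an $O(1)$-round rule'' -- will be the key tool in both directions.

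For $\CONT \Rightarrow \local(O(\log^*(n)))$, fix a continuous solver $\mathcal{A}$ together with a hyperaperiodic $\Lambda$ and let $R=R_\mathcal{A}$. The plan is to build, on a torus $T_n$ and in $O(\log^*(n))$ rounds, an input labeling $\Lambda' : V(T_n) \to \{0,1\}$ such that every $R$-ball of $T_n$ is isomorphic (as a labeled ball) to an $R$-ball of $\Lambda$. This uses a standard $O(\log^* n)$-round MIS computation in a high power graph of $T_n$ to partition the torus into constant-sized boxes, and then copies a carefully chosen constant-size patch of $\Lambda$ into each box, with patches in adjacent boxes drawn from adjacent regions of $\Lambda$. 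Once $\Lambda'$ is constructed, applying $\mathcal{A}$ for an additional $R = O(1)$ rounds produces a valid $\Pi$-coloring, because each $R$-ball of $\Lambda'$ is indistinguishable from one in $\Lambda$ on which $\mathcal{A}$ succeeds. The total local complexity is $O(\log^*(n))$.

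For $\local(O(\log^*(n))) \Rightarrow \CONT$, by \cref{rem:chang_pettie_decomposition} it suffices to show that a MIS of the $k$-th power graph $(\mathbb{Z}^d)^k$ can be computed continuously for any fixed $k$ (the final $O(1)$-round rule is automatically continuous). Given $\Lambda \in \operatorname{Free}(2^{\mathbb{Z}^d})$, assign to each vertex $v$ and each radius $r$ the binary string $\Lambda \upharpoonright B(v,r)$ read off via a canonical enumeration of $B({\bf 0},r)$. Freeness of $\Lambda$ implies that for every $u \neq v$ there is a finite $r$ at which these patterns differ, giving an equivariant, eventually-total ordering of vertices. Feed these patterns as ``effective identifiers'' into any deterministic distributed MIS algorithm (e.g.\ a Linial-style greedy scheme on the power graph): a vertex joins the MIS once its local view disambiguates it from the relevant competitors and none of its smaller-ranked neighbors have joined. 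Since the continuous model only requires termination in finite time at each vertex (not a uniform bound), this is enough.

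The principal technical obstacle is the $\CONT \Rightarrow \local$ step -- constructing $\Lambda'$ on the torus so that every $R$-ball matches an $R$-ball of $\Lambda$, including across the toroidal wrap-around. This is exactly the content of the Twelve Tiles Theorem of Gao, Jackson, Krohne and Seward; it is handled by selecting a constant-size ``alphabet'' of patches of $\Lambda$ that can be glued compatibly along any axis-aligned interface, with the combinatorial backbone of the gluing produced by the $O(\log^*(n))$ MIS-of-a-power-graph computation on $T_n$. A minor subtlety in the forward direction is that identifiers derived from $\Lambda$ have unbounded bit length, so care is required to certify termination at each vertex; freeness of $\Lambda$, ensuring that local patterns eventually separate all vertices within the relevant power-graph neighborhood, makes this routine.
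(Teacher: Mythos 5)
Your overall architecture matches the paper's: both directions rest on hyperaperiodic elements plus the MIS-of-a-power-graph decomposition, and your $\local(O(\log^*(n))) \Rightarrow \CONT$ argument (continuous MIS of any fixed power graph via Linial-style use of local $\Lambda$-patterns as identifiers, followed by the $O(1)$ rule from the Chang--Pettie decomposition) is exactly what the paper does.

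The one place your sketch, read literally, has the quantifiers in the wrong order is the direction $\CONT \Rightarrow \local(O(\log^*(n)))$. You first fix a hyperaperiodic $\Lambda$, extract the uniform radius $R=R_{\mathcal{A}}$ by compactness, and then want to label the torus with patches of $\Lambda$ so that every $R$-ball of the resulting labeling already occurs in $\Lambda$. But an arbitrary hyperaperiodic $\Lambda$ need not contain the configurations obtained by gluing its own patches along the irregular $\{r,r+1\}$-side-length box decomposition, and you cannot choose the tile size $r\gg R$ before knowing $R$, which itself depends on which $\Lambda$ you chose --- note that $\mathcal{A}$ is only guaranteed to terminate within radius $R$ \emph{on $\Lambda$}, not on every free point, so the glued configurations really must live inside $\Lambda$ for the $O(1)$-round simulation to be both well defined and correct. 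The paper breaks this circularity by building a special hyperaperiodic element: start from an arbitrary hyperaperiodic $\Lambda'$, and for \emph{every} tile size $r$ (precisely because $R$ is not known in advance) sparsely embed into $\Lambda'$ all of the finitely many local configurations of labeled $\{r,r+1\}$-tiles, together with all possible $\Sigma_{in}$-labelings; the resulting $\Lambda$ is still hyperaperiodic, and only then is $R$ extracted and $r\gg R$ chosen. You gesture at this via the Twelve Tiles Theorem, which is indeed where the gluing compatibility lives, but the ``embed all scenarios before you know $R$'' step is the part your sketch omits and is the crux of making the reduction legitimate.
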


\begin{figure}
    \centering
    \includegraphics[width=.9\textwidth]{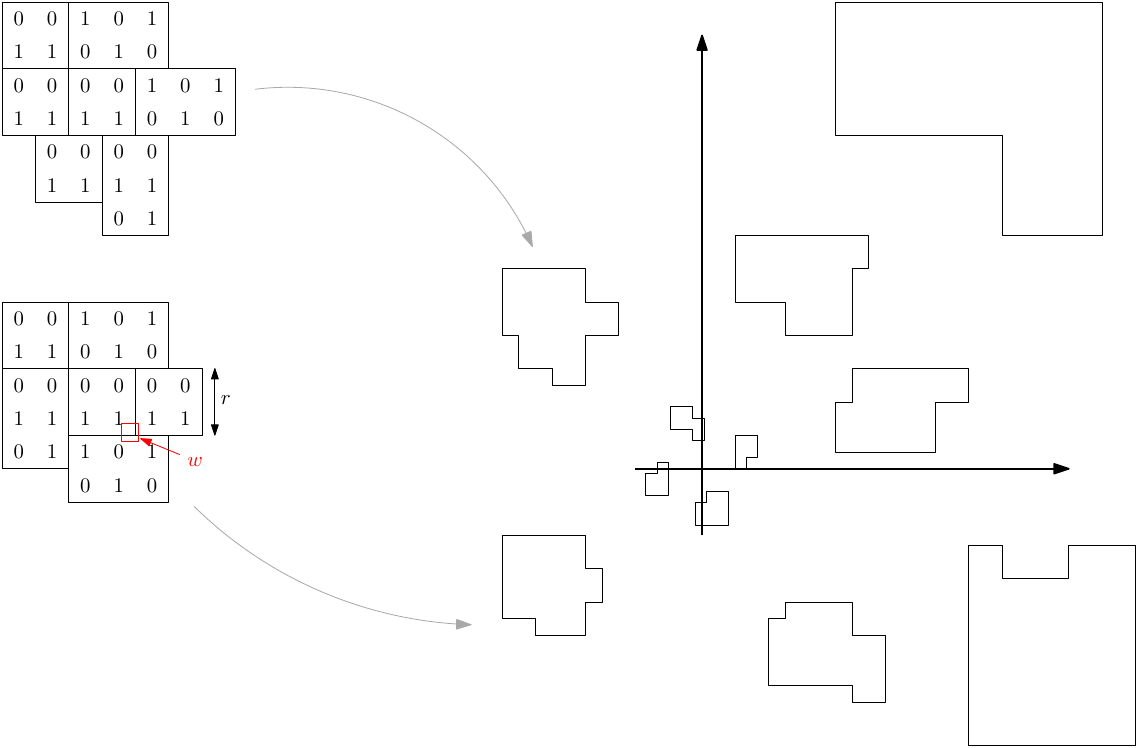}
    \caption{On the left side we have examples of labeled local tiling configurations compared with a possible continuity window $w$. These are then sparsely embedded into $\mathbb{Z}^2$.}
    \label{fig:my_label}
\end{figure}

\begin{proof}[Proof Sketch]
One direction was observed by Bernshteyn \cite{Bernshteyn2021LLL} and follows from the fact that there is a continuous algorithm that produces a maximal $r$-independent set for every $r\in \mathbb{N}$ (recall \cref{thm:classification_of_local_problems_on_grids}).
This is implicit, even though in a different language, already in Gao and Jackson \cite{GaoJackson}.

Next, suppose that $\Pi$ is in the class $\CONT$ and write $\fA$ for the continuous algorithm for $\Pi$. 
We will carefully construct a certain hyperaperiodic element $\Lambda$ and let $\fA$ solve $\Pi$ on it. 
We know that there is a certain number $w$ such that $\fA$ solves $\Pi$ on $\Lambda$ with local complexity $w$. 
We note that even though we do not know the value $w$ in advance, it depends on $\Lambda$ that is due to be constructed, the flexibility of the construction guarantees to ``encode'' in $\Lambda$ all possible scenarios.
See the discussion at the end of the argument. 

The overall approach is as follows. 
Our local algorithm $\fA'$ first tiles the plane with  tiles of length much bigger than $w$ with $O(\log^* n)$ local complexity. We cannot tile the plane in a periodic manner in $O(\log^* n)$ steps, but it can be done with a constant number of different tiles such that the side length of each tile is either $r$ or $r+1$ for some $r \gg w$ (this construction is from \cite[Theorem~3.1]{GaoJackson}, see also the proof of \cref{thm:grid_speedup}). 
For each tile $T$, $\fA'$ labels it with certain $\{0,1\}$ labeling. 
This labeling depends only on the shape of the tile.
We make sure that not only $T$ with its labeling, but also $T$ together with all possible local configurations of tiles around it and corresponding labelings appears in $\Lambda$.
In particular, as $r\gg t,w$ ($t$ is the radius necessary to check the solution $\Pi$), we have that the $(w+t)$-hop neighborhood of any node is labeled in a way that also appears somewhere in the labeling $\Lambda$. 
After labeling each tile, $\fA'$ simulates $\fA$. We know that $\fA$ finishes after $w$ steps on $\Lambda$ and is correct there, hence $\fA'$ is correct and has local complexity $O(\log^* n)$. What remains is to describe is how to get the labeling of each tile.  

We use the fact that there is a lot of flexibility when constructing a hyperaperiodic labeling. 
We define the hyperaperiodic labeling $\Lambda$ of the input graph $\mathbb{Z}^d$ in such a way that, after fixing $r$, each one of constantly many local configurations of $\{r,r+1\}$-tiles are embedded in $\Lambda$, each one in fact in many copies that exhaust possible $\Sigma_{in}$-labelings.

This is done as follows. 
Fix any hyperaperiodic element $\Lambda'$, these exist by \cite[Lemma~2.8]{GJKS}.
Given a tile $T$, that depends on $r$, $\{0,1\}$-label it by any $T$-pattern that appears in $\Lambda'$.
Consider the collection of all possible local configurations of such labeled tiles that are moreover labeled by $\Sigma_{in}$.
This collection is finite. Since we do not know $w$ in advance, do this for any value of $r$. This yields a countable collection of possible configurations. 
Embed this collection sparsely into $\Lambda'$, \cref{fig:my_label}. Extend the $\Sigma_{in}$-labeling arbitrarily. 
This defines $\Lambda$.
It is routine to check that $\Lambda$ is hyperaperiodic, see \cite[Lemma~5.9]{GJKS}.



It remains to recall that since $\Lambda$ is hyperaperiodic, there is $w\in \mathbb{N}$ such that $\fA$ has uniformly bounded locality by $w$ on $\Lambda$.
Pick $r\gg w,t$.
This works as required since any $(w+t)$-hop neighborhood $\fA'$ can encounter is present in $\Lambda$.
\end{proof}

\subsection{Descriptive Combinatorics Classes}
\label{subsec:descriptive_combinatorics_discussion}

In this section we define the classes of problems that are studied in descriptive combinatorics.
There are two equivalent ways how the notion of a \emph{Borel graph} can be defined.
The less abstract one is as follows.
Consider the unit interval $[0,1]$ with the $\sigma$-algebra of Borel sets, that is the smallest algebra of subsets that contains sub-intervals and is closed under countable unions and complements.
Let $A,B\subseteq [0,1]$ be Borel sets and consider a bijection $\varphi:A \to B$ between them.
We say that $\varphi$ is \emph{Borel measurable} if $\varphi^{-1}(D\cap B)$ is a Borel set for every Borel set $D$.
Now, let $\fG$ be a graph with vertex set $[0,1]$ and assume that the degree of every vertex is bounded by some $\Delta<\infty$.
We say that $\fG$ is a \emph{Borel graph} if the edge relation is induced by Borel bijections, that is there are sets $A_1,\dots, A_\ell$, $B_1,\dots, B_\ell$, and Borel bijections $\varphi_1,\dots, \varphi_\ell$ such that $(x,y)$ is an edge in $\fG$ if and only if there is $1\le i\le \ell$ such that either $\varphi_i(x)=y$ or $\varphi_i(y)=x$.

The more abstract way how to define Borel graphs uses the notion of a \emph{standard Borel space}.
For example, $[0,1]$ with the $\sigma$-algebra of Borel sets, when we forget that it comes from the standard topology, is a standard Borel space.
Formally, a standard Borel space is a pair $(X,\fB)$, where $X$ is a set and $\fB$ is a $\sigma$-algebra that comes from some separable and complete metric structure on $X$.
A uniformly bounded degree graph $\fG$ on $(X,\fB)$ is Borel if it satisfies the same condition as in the case of the particular space $[0,1]$.

It is easy to see that with this definition all the basic constructions from graph theory are \emph{relatively} Borel, e.g. if $A$ is a Borel set of vertices, then $N_\fG(A)$ the neighbors of elements in $A$ form a Borel set.
Also note that since we assume that the degree is uniformly bounded, every connected component in $\fG$ is at most countable.
Typical questions that are studied in this context are Borel versions of standard notions, e.g., \emph{Borel} chromatic number, \emph{Borel} chromatic index, \emph{Borel} perfect matching, etc.
We refer the reader to \cite{pikhurko2021descriptive_comb_survey,kechris_marks2016descriptive_comb_survey} for more details.

The most relevant Borel graphs for us are the ones that are induced by a free Borel action of $\mathbb{Z}^d$.
That is, there are $d$-many aperiodic Borel bijections $\varphi_i:X\to X$ that pairwise commute, where \emph{aperiodic} means that $\varphi_i^k(x)\not=x$ for every $k>0$, $i\in [d]$ and $x\in X$.
Then the Borel (labeled) graph $\fG$ that is induced by this action consists of edges of the form $(x,\varphi_i(x))$, where $x\in X$ and $i\in [d]$.
The most important result in this context is that $\fG$ admits a \emph{Borel toast structure}.
This follows from \cite{GJKS2}, see Marks and Unger \cite{Circle} for a proof.
We note that it is not understood in general what Borel graphs admit a Borel toast structure.
This is connected with the notoriously difficult question about Borel hyperfiniteness, see \cite{AsymptoticDim}.

Given an LCL $\Pi$, we say that $\Pi$ is in the class $\borel$ if there is a Borel measurable $\Pi$-coloring of every Borel graph $\fG$ as above, i.e., induced by a free Borel action of $\mathbb{Z}^d$.
Equivalently, it is enough to decide whether $\Pi$ can be solved on the canonical Borel graph $\fG$ on $\operatorname{Free}(2^{\mathbb{Z}^d})$.
Moreover, if $\Pi$ has some inputs $\Sigma_{in}$, then we consider any Borel labeling $X\to \Sigma_{in}$ and then ask if there is a Borel solution that satisfies all the constraints on $\Sigma_{in}\times \Sigma_{out}$.

\begin{theorem}[Basically \cite{GJKS}]
Let $\Pi$ be an LCL that is in the class $\ltoast$.
Then $\Pi\in\borel$.
\end{theorem}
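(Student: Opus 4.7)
The plan is to combine two ingredients: the existence of a Borel $q$-toast on the canonical Borel graph (Gao--Jackson--Krohne--Seward \cite{GJKS2}, with a detailed proof by Marks--Unger \cite{Circle}), and the hypothesis that $\Pi$ admits a lazy toast algorithm $\fA$ with some parameter $q \in \mathbb{N}$. First, I would observe that it is enough to produce a Borel $\Pi$-coloring of the canonical Borel graph $\fG$ on $\operatorname{Free}(2^{\mathbb{Z}^d})$ (and any free Borel $\mathbb{Z}^d$-action), so I fix such a $q$ and such an $\fA$ witnessing $\Pi \in \ltoast$ and invoke the cited results to obtain a Borel $q$-toast $\mathcal{D}$ on $\fG$ in the sense of \cref{subsec:toast_definition}.

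Next, the idea is to apply $\fA$ inductively on the rank function $\mathfrak{r}_\mathcal{D} : \mathcal{D} \to \mathbb{N}$, exactly as in the abstract definition of $\fA(\mathcal{D})$ given in \cref{subsec:toast_definition}. I would first check that $\mathfrak{r}_\mathcal{D}$ is Borel: each level set is carved out by a bounded quantifier over the already-Borel family of pieces together with the (Borel) inclusion relation between them. Then I would define the output coloring by recursion on $k \in \mathbb{N}$: at stage $0$, apply $\fA(D,\emptyset)$ to each minimal piece $D$; at stage $k+1$, for every piece $D$ of rank $k+1$, form the union of the already-colored proper sub-pieces and feed the pair
\[
\Bigl(\,\bigcup\{E \in \mathcal{D} : E \subsetneq D\},\ \bigcup\{\fA(E) : E \in \mathcal{D},\ E \subsetneq D\}\Bigr)
\]
into $\fA$. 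Because $\fA$ depends only on the isomorphism type of its finite input (item (b) of \cref{def:extending_function}) and the $\mathbb{Z}^d$-action is Borel, each stage is a Borel function of $\mathcal{D}$ and of the input labeling, and a countable union of Borel partial colorings is Borel.

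Finally, the hypothesis that $\fA$ witnesses $\Pi \in \ltoast$ says exactly that for every $q$-toast $\mathcal{D}$ the resulting labeling $\fA(\mathcal{D})$ is a $\Pi$-coloring; in particular it is total, because every vertex lies in some piece of $\mathcal{D}$ by the first axiom of a $q$-toast, and the lazy algorithm is required to color all of $\mathbb{Z}^d$ in the limit, and the local constraints of $\Pi$ are satisfied at every vertex. Combined with the Borel nature of the inductive construction, this produces a Borel $\Pi$-coloring of $\fG$, so $\Pi \in \borel$. The only non-trivial ingredient is the Borel toast existence, which I import as a black box from \cite{GJKS2, Circle}; the remainder is routine bookkeeping verifying that the rank-by-rank induction preserves Borel measurability, and this is where I would expect the main (mild) technical friction, namely checking that the various ``pick the unique piece containing this vertex of this rank'' selections can be implemented in a Borel way using the Borel parametrization of $\mathcal{D}$.
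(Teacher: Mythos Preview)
Your proposal is correct and follows exactly the approach the paper intends. The paper does not give a detailed proof of this theorem; it simply notes that every Borel graph induced by a free $\mathbb{Z}^d$-action admits a Borel $q$-toast (citing \cite{GJKS2} and \cite{Circle}) and treats the fact that running a toast algorithm inductively on the rank of a Borel toast yields a Borel $\Pi$-coloring as standard. Your write-up supplies precisely this missing bookkeeping, so it is, if anything, more complete than what the paper provides.
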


In fact, all the LCL problems without input that we are aware of and that are in the class $\borel$ are in fact in the class $\toast$.
It is an open problem to decide whether it is always the case.

Another interesting perspective on how to see Borel constructions is as a countably infinite analog of $\local(O(\log^*(n)))$.
Namely, suppose that you are allowed to inductively (a) construct Borel MIS with some parameter \cite{KST} (b) do local $O(1)$-construction that depends on the previously built structure.
It is clear that objects that are built in this way are Borel, however, it is not clear if this uses the whole potential of Borel sets to find Borel solutions of LCLs.

As a last remark we note that traditionally in descriptive combinatorics, people consider relaxations of Borel constructions.
This is usually done by ignoring how a solution behaves on some negligible Borel set.
Prominent examples are \emph{measure $0$ sets} for some Borel probability measure on $(X,\fB)$ or \emph{meager sets} for some compatible complete metric on $(X,\fB)$.
We denote the classes of problems that admit such a solution for \emph{every} Borel probability measure or compatible complete metric as $\MEASURE$ and $\BAIRE$, respectively.
Without going into any details we mention that on general graphs these classes are much richer than $\BOREL$, see e.g. \cite{DetMarks,BrooksMeas}.
However, it is still plausible that on $\mathbb{Z}^d$ we have $\borel=\BAIRE=\MEASURE$.
It is well-known that this is true on $\mathbb{Z}$ \cite{grebik_rozhon2021LCL_on_paths}.

\subsection{Finitely Dependent Processes}

In this section we use the notation that was developed in \cref{subsec:subshifts}, also we restrict our attention to LCLs without input.
Let $\Pi=(b,t,\fP)$ be such an LCL.
Recall that $X_\Pi$ is the space of all $\Pi$-colorings.

We say that a distribution $\mu$ on $b^{\mathbb{Z}^d}$ is \emph{finitely dependent} if it is shift-invariant and there is $k\in \mathbb{N}$ such that the events $\mu\upharpoonright A$ and $\mu \upharpoonright B$ are independent for every finite set $A,B\subseteq \mathbb{Z}^d$ such that $d(A,B)\ge k$.
Note that shift-invariance is the same as saying that the distribution $\mu\upharpoonright A$ does not depend on the position of $A$ in $\mathbb{Z}^d$ but rather on its shape.

Here we study the following situation:
Suppose that there is a finitely dependent process $\mu$ that is concentrated on $X_\Pi$.
In that case, we say that $\Pi$ is in the class $\FINdep$. 
In other words, $\Pi \in \FINdep$ if there is a distribution from which we can sample solutions of $\Pi$ such that if we restrict the distribution to any two sets that are sufficiently far away from each other, the corresponding restrictions of the solution of $\Pi$ that we sample are independent. 

It is easy to see that all problems in $\local(O(1))$ (that is, problems solvable by a block factor) are finitely dependent, but examples of finitely dependent processes outside of this class were rare and unnatural, see the discussion in \cite{FinDepCol}, until Holroyd and Liggett \cite{FinDepCol} provided an elegant construction for 3- and 4-coloring of a path and, more generally, for distance coloring of a grid with constantly many colors. This implies the following result.

\begin{theorem}[\cite{FinDepCol}]
$\local(O(\log^* n)) \subseteq \FINdep$ (see \cref{fig:big_picture_grids}).
\end{theorem}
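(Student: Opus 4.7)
The plan is to combine two ingredients: (i) the fact recorded in \cref{rem:chang_pettie_decomposition} that every $\Pi\in\local(O(\log^*n))$ can be solved by applying a constant-radius local function on top of a maximal independent set of a suitable power of $\mathbb{Z}^d$ (equivalently, on top of a proper distance-$k$ coloring with $q$ colors, for some constants $k,q$ depending only on $\Pi$); and (ii) the Holroyd--Liggett result cited as \cite{FinDepCol}, which provides a finitely dependent distance-$k$ coloring of $\mathbb{Z}^d$ with $q$ colors, for any prescribed $k$ and large enough $q$. The proof is then a short composition argument, together with the standard observation that a block factor of a finitely dependent process is finitely dependent.

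Concretely, given $\Pi=(b,t,\fP)\in\local(O(\log^*n))$, I would first fix constants $k,q,\ell\in\mathbb{N}$ and a function
\[
 f : [q]^{\mathcal{B}(\mathbf{0},\ell)} \to b
\]
such that whenever $c\colon \mathbb{Z}^d\to[q]$ is a proper distance-$k$ coloring (i.e.\ a proper coloring of the $k$-th power of $\mathbb{Z}^d$), the map $F(c)(v):=f(c\upharpoonright \mathcal{B}(v,\ell))$ is a $\Pi$-coloring of $\mathbb{Z}^d$. Such $f,k,q,\ell$ exist by \cref{rem:chang_pettie_decomposition} applied to $\Pi$ (with the $\local(O(1))$ algorithm sitting on top of a power-graph MIS, which in particular gives a distance-$k$ coloring after constantly more rounds).

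Next, invoke \cite{FinDepCol}: there is a shift-invariant distribution $\nu$ on $[q]^{\mathbb{Z}^d}$, concentrated on proper distance-$k$ colorings, with some finite dependency range $r$. Define $\mu:=F_\ast\nu$, the pushforward through the block factor $F$. Shift-invariance of $\mu$ is immediate from that of $\nu$ and the equivariance of $F$. For the dependency property, let $A,B\subseteq\mathbb{Z}^d$ be finite with $d(A,B)\ge r+2\ell$. The random variables $\mu\upharpoonright A$ and $\mu\upharpoonright B$ are measurable functions of $\nu\upharpoonright \mathcal{B}(A,\ell)$ and $\nu\upharpoonright \mathcal{B}(B,\ell)$ respectively, and these two neighborhoods are at distance at least $r$, hence independent under $\nu$. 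Thus $\mu\upharpoonright A$ and $\mu\upharpoonright B$ are independent, so $\mu$ is finitely dependent with range $r+2\ell$. Finally $\mu(X_\Pi)=1$ by the choice of $f$, so $\Pi\in\FINdep$.

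The only non-routine input is ingredient (ii), the existence of a finitely dependent proper distance-$k$ coloring of $\mathbb{Z}^d$ with a bounded number of colors for every $k$; this is exactly what \cite{FinDepCol} supplies (the original construction on $\mathbb{Z}$ extends to $\mathbb{Z}^d$, which is the form invoked here). Everything else — the reduction to a block factor over a distance coloring via \cref{rem:chang_pettie_decomposition}, and the elementary fact that block factors preserve finite dependence up to additive slack in the range — is standard.
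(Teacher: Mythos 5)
Your proof is correct and is essentially the paper's argument: the paper treats this theorem as an immediate consequence of Holroyd--Liggett's finitely dependent distance colorings of the grid, the implicit reduction being exactly your composition of \cref{rem:chang_pettie_decomposition} with the standard fact that a block factor of a finitely dependent process is finitely dependent (with additive slack $2\ell$ in the range).
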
 

On the other hand, recently Spinka \cite{Spinka} proved that any finitely dependent process can be described as a finitary factor of iid process.\footnote{His result works in a bigger generality for finitely generated amenable groups.}
Using \cref{thm:toast_in_tail}, we sketch how this can be done on grids, we keep the language of LCLs.

\begin{theorem}[\cite{Spinka}]
Let $\Pi$ be an LCL on $\mathbb{Z}^d$ and $\mu$ be finitely dependent process on $X_\Pi$, i.e., $\Pi\in \FINdep$.
Then there is a finitary factor of iid $F$ (equivalently a uniform local algorithm $\fA$) with uniform local complexity $(1/\eps)^{1+o(1)}$ such that the distribution induced by $F$ is equal to $\mu$.
In fact, $\Pi$ is in the class $\toast$. 
\end{theorem}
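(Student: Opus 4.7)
The plan is to construct a randomized spontaneous toast algorithm $\fA$ whose output distribution, when applied to any sufficiently fine toast, equals $\mu$. Once this is done, both conclusions follow: composing $\fA$ with the uniform-complexity toast construction from \cref{thm:toast_in_tail} yields an ffiid of complexity $(1/\eps)^{1+o(1)}$ inducing $\mu$, and the equality $\rtoast = \toast$ from \cref{thm:derandomOfToast} gives $\Pi \in \toast$.

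Fix a dependency range $k$ of $\mu$ and any $q \geq k$. For a piece $D$ in a $q$-toast $\fD$ together with a partial coloring $c$ on $\dom(c) = \bigcup\{E \in \fD : E \subsetneq D\}$, I would define $\fA(D, c)$ to sample an extension of $c$ to $D$ from the conditional distribution of $\mu \upharpoonright D$ given that its restriction to $\dom(c)$ equals $c$. Shift-invariance of $\mu$ ensures this depends only on the isomorphism type of $(D, c)$, as required for a spontaneous toast algorithm, and the finitely supported conditional distribution can be realized as a measurable function of the random $[0,1]$-labels on $D$.

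The heart of the argument will be an induction on rank showing that for every $q$-toast $\fD$, the output distribution restricted to $R_r := \bigcup\{E \in \fD : \mathfrak{r}_\fD(E) \leq r\}$ equals $\mu \upharpoonright R_r$. The key geometric fact is that any two disjoint pieces of a $q$-toast are at graph distance at least $q$: a shortest path between disjoint pieces must cross both inner boundaries, so its length is bounded below by the boundary distance. Combined with $k$-dependency and $q \geq k$, this gives that $\mu$-restrictions to distinct pieces of $\fD$ are mutually independent. In the inductive step at rank $r$, the rank-$r$ pieces $\{D_i^r\}_i$ together with $W := R_{r-1} \setminus \bigcup_i D_i^r$ are contained in pairwise disjoint pieces of $\fD$, so their $\mu$-colorings are independent. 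From this one deduces that $\mu \upharpoonright R_r$ factors as $\mu \upharpoonright R_{r-1}$ times the product over $i$ of the conditional distributions of $\mu \upharpoonright D_i^r$ given $\mu \upharpoonright (D_i^r \cap R_{r-1})$, which is precisely the distribution $\fA$ produces on top of the inductive hypothesis.

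The main obstacle I expect is exactly this factorization: one must identify the relevant sets (the rank-$r$ pieces, their intersections with $R_{r-1}$, and the leftover set $W$), show that they are pairwise $q$-apart and hence $\mu$-independent, and then verify that conditioning preserves the product structure (using that conditioning a family of independent variables on functions of its individual components preserves independence). Once this is settled, $\fA$ almost surely produces a sample from $\mu$, which is supported on $X_\Pi$, so $\fA$ solves $\Pi$ and we obtain $\Pi \in \rtoast$; \cref{thm:derandomOfToast} then yields $\Pi \in \toast$. For the ffiid, one splits the iid $[0,1]$-labels into two independent copies, uses one to build the $q$-toast via \cref{thm:toast_in_tail} and the other to execute $\fA$; the coding radius is dominated by the toast construction, giving the claimed complexity.
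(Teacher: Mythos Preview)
Your proposal is correct and follows essentially the same approach as the paper's proof sketch: define a spontaneous randomized toast algorithm by sampling from the conditional distribution of $\mu$ given the already-colored subpieces, use $k$-dependence (with $q\ge k$) to verify by induction on rank that the output has law $\mu$, then split the iid labels to run the toast construction of \cref{thm:toast_in_tail} alongside the sampling, and finally invoke \cref{thm:derandomOfToast} to pass from $\rtoast$ to $\toast$. The paper states the inductive step as ``the finitely dependent condition is exactly saying that the joint distribution over the solution on the small pieces of $\mu$ is the same as the distribution we sampled from'' and leaves the rest as easy; your write-up simply makes this explicit via the laminar/distance structure of the toast and the resulting independence factorization.
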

\begin{proof}[Proof Sketch]
Suppose that every vertex is assigned uniformly two reals from $[0,1]$.
Use the first string to produce a $k$-toast as in \cref{thm:toast moment}, where $k$ is from the definition of $\mu$.

Having this use the second random string to sample from $\mu$ inductively along the constructed $k$-toast.
It is easy to see that this produces ffiid $F$ that induces back the distribution $\mu$.
In fact, it could be stated separately that the second part describes a randomized $\toast$ algorithm: in each stage of the algorithm sample from $\mu$ given the coloring on smaller pieces that were already fixed. The finitely dependent condition is exactly saying that the joint distribution over the solution on the small pieces of $\mu$ is the same as the distribution we sampled from.
Now use \cref{thm:derandomOfToast} to show that $\Pi\in \toast$. 
\end{proof}

In our formalism, $\FINdep \subseteq \toast$, or equivalently we get $\olocal(O(\log^* 1/\eps)) \subseteq \FINdep \subseteq \olocal((1/\eps)^{1+o(1)})$.
It might be tempting to think that perhaps $\FINdep =\toast$.
It is however a result of Holroyd, Schramm, and Wilson \cite[Corollary~25]{HolroydSchrammWilson2017FinitaryColoring} that proper vertex $3$-coloring, $\Pi^{d}_{3,\chi}$, is not in $\FINdep$ for every $d>1$.
The exact relationship between these classes remains unknown.

\subsection{Uniform Complexities in Distributed and Centralised Models}
\label{subsec:congestLCA}

Although we are not aware of systematic work on uniform local complexities in the distributed algorithms community, the notion of uniform algorithms (algorithms that do not know $n$) comes from  \cite{Korman_Sereni_Viennot2012Pruning_algorithms_+_oblivious_coloring}. Similar notions such as the node-average complexity \cite{feuilloley2017node_avg_complexity_of_col_is_const,barenboim2018avg_complexity,chatterjee2020sleeping_MIS} or energy complexity \cite{chang2018energy_complexity_exp_separation,chang2020energy_complexity_bfs} are also studied. 
We also believe that the breakthrough MIS algorithm of Ghaffari \cite{ghaffari2016MIS} may have found many applications exactly because of the fact that its basis is a uniform algorithm and its uniform complexity can be computed. 

A useful property of algorithms with nontrivial uniform complexities is that if their first moment is bounded, their average complexity is constant. 
For example, with a little more work, we believe that the result from \cref{thm:3in toast}, that is $3$-coloring of grids has uniform complexity $(1/\eps) \cdot 2^{O(\sqrt{\log 1/\eps})}$, can be adapted to the setting of finite grids where it would yield a $3$-coloring algorithm with average complexity $2^{O(\sqrt{\log n})}$. 
The average complexity is studied also for nonconstant $\Delta$.  In \cite{barenboim2018avg_complexity} it was shown that Luby's $\Delta+1$ coloring algorithm has uniform complexity $O(\log 1/\eps)$ even for nonconstant $\Delta$, but the question of whether MIS can have constant average coding radius in nonconstant degree graphs is open \cite{chatterjee2020sleeping_MIS}. 

Uniform bounds make sense also in the related $\lca$ model \cite{lca11,lca12,RosenbaumSuomela2020volume_model}: there one has a huge graph and instead of solving some problem, e.g., coloring, on it directly, we are only required to answer queries about the color of some given nodes. It is known that, as in the $\local$ model, one can solve $\Delta+1$ coloring with $O(\log^* n)$ queries on constant degree graphs by a variant of Linial's algorithm \cite{even03_conflictfree}. But using the uniform version of Linial's algorithm instead, we automatically get that $O(\log^* n)$ is only a worst-case complexity guarantee and on average we expect only $O(1)$ queries until we know the color of a given node. That is, the amortized complexity of a query is in fact constant. The same holds e.g. for the $O(\log n)$ query algorithm for LLL from \cite{brandt_grunau_rozhon2021LLL_in_LCA}.

\section{Open Questions}
\label{sec:open_problems}

Here we collect some open problems that we find interesting. 

\begin{enumerate}
    \item Is it true that $\lrtoast = \ltoast$? If yes, this implies $\ffiid \subseteq \borel$. 
    \item Is it true that $\lrtoast = \rtoast$? If yes, this implies $\ffiid = \olocal((1/\eps)^{1+o(1)})$ and answers the fourth question of Holroyd, Schramm and Wilson \cite{HolroydSchrammWilson2017FinitaryColoring}. 
	\item Can the uniform complexity $(1/\eps) \cdot 2^{O(\sqrt{\log 1/\eps})}$ in the construction of the toast be improved to $(1/\eps) \cdot \poly \log (1/\eps)$? The same can be asked for the complexity of intermediate problems from \cref{sec:Inter}. 
	\item Is there a local problem $\Pi$ such that its uniform complexity is $\Theta(\sqrt[r]{(1/\eps)})$ for $r \not \in \mathbb{N}$? 
    \item What is the uniform complexity of the perfect matching problem on grids? 
	\item Is it true (on $\mathbb{Z}^d$) that $\BOREL=\MEASURE$ or $\borel = \baire$? 
\end{enumerate}

\section*{Acknowledgement}
We would like to thank F. Bencs, A. Bernshteyn, Y. Chang, M. Ghaffari, A. Hrušková, S. Jackson, O. Pikhurko, B. Seward, Y. Spinka, L. Tóth, and Z. Vidnyánszky for many engaging discussions. We would also like to thank the reviewers for many helpful comments. 
The first author was supported by Leverhulme Research Project Grant RPG-2018-424. 
This project has received funding from the European Research Council (ERC) under the European
Unions Horizon 2020 research and innovation programme (grant agreement No. 853109).

\bibliographystyle{alpha}
\bibliography{ref}

\appendix

\section{Preliminaries with uniform algorithms}
\label{subsec:mis_cancer_constructions}

In this section, we discuss some technical results about uniform local algorithms needed in \cref{app:missing_toast_proofs,app:intermediate_problems}. 
First, we state a lemma that bounds the uniform complexity of a parallel composition of two uniform algorithms. 

\begin{lemma}[Parallel Composition]
\label{lem:parallel_composition}
Let $\fA_1$ and $\fA_2$ be two uniform local algorithms with a uniform complexity of $t_1(\eps)$ and $t_2(\eps)$, respectively. 
Let $\fA$ be the parallel composition of $\fA_1$ and $\fA_2$. That is, $\fA$ is defined as a uniform local algorithm that outputs at a given node $u\in \mathbb{Z}^d$ solutions to both $\fA_1$ and $\fA_2$.
Similarly, the coding radius of $\mathcal{A}$ at $u$ is the smallest radius needed to output a solution to both $\fA_1$ and $\fA_2$ at $u$. 
Then, $t_\fA(\eps) \leq \max\{t_{\fA_1}(\eps/2), t_{\fA_2}(\eps/2)\}$.

More generally, for $k$ uniform algorithms $\fA_1, \dots, \fA_k$ and their parallel composition $\fA$, we get $t_\fA(\eps) \leq \max_{1 \le i \le k}t_{\fA_i}(\eps/k) $. 
\end{lemma}
\begin{proof}
The probability that the coding radius of the algorithm $\mathcal{A}_1$ at vertex $u$ is larger than $\max\{t_{\fA_1}(\eps/2), t_{\fA_2}(\eps/2)\}$ is at most $\eps/2$, the same holds for the coding radius of algorithm $\mathcal{A}_2$ at vertex $u$.
Hence, by the union bound, the probability that the coding radius of the algorithm $\mathcal{A}$ at a vertex $u$ is larger than $\max(t_{\fA_1}(\eps/2), t_{\fA_2}(\eps/2))$ is at most $\eps/2 + \eps/2 = \eps$.
Consequently, $t_\fA(\eps) \leq \max\{t_{\fA_1}(\eps/2), t_{\fA_2}(\eps/2)\}$, as desired.
\end{proof}

We will also need to use the uniform distributed algorithm for MIS of \cite[Theorem 1.1]{ghaffari2016MIS}. Note that in the following statement, we do not regard the maximum degree $\Delta$ of a graph as a constant.

\begin{lemma}
\label{lem:mohsen}
There is a uniform local algorithm that constructs a maximal independent set (MIS) in any graph $G$ of maximum degree $\Delta$ in uniform complexity $O(\log\Delta + \log 1/\eps)$ where the constant hidden by the $O$ notation does not depend on $\Delta$. 
\end{lemma}

Let $d>1$, define $G=\mathbb{Z}^d$ and set $r_k=2^{a k^2}$ for some sufficiently large constant $a>0$.
Write $G^{r_k}$ for the $r_k$-power graph of $\mathbb{Z}^d$, i.e., two elements are connected if their graph distance in $G$ is at most $r_k$.
We use \cref{lem:mohsen,lem:parallel_composition} to bound the coding radius needed at a given vertex $u\in \mathbb{Z}^d$ to construct simultaneously maximal independent sets (MISes) in the graphs $G^{r_1}, G^{r_2},\dots, G^{r_k}$ for $k\in \mathbb{N}$ in some neighborhood of $u$.

\begin{lemma}
\label{lem:mis_on_grids}
    Let $k\in \mathbb{N}$.
    There is a uniform local algorithm of uniform complexity $O(10^{20k}r_k \cdot \log (k r_k/\eps))$ (with the hidden constant depending on $d$ but not $k$) that outputs solutions to the maximal independent set (MISes) problems in $G^{r_1}, G^{r_2}, \dots, G^{r_k}$ on a whole neighborhood $\fB(u, 10^{10k} r_k)$ of a given vertex $u\in \mathbb{Z}^d$.
    
    In other words, the probability that a given vertex $u$ looks at distance $O(10^{20k}r_k \cdot \log (k r_k/\eps))$ and there is some $v\in \mathcal{B}(u,10^{10k} r_k)$ for which its membership in one of the independent sets cannot be determined is at most $\epsilon$. 
    
\end{lemma}
\begin{proof}
    The uniform local algorithm is simply defined as the parallel composition of the algorithms of Ghaffari from \cref{lem:mohsen} defined for graphs of maximum degree $(2r_1+1)^d, (2r_2+1)^d, \dots, (2r_k+1)^d$.
    Next, we compute the uniform complexity of such an algorithm using \cref{lem:parallel_composition}. 

    First, the uniform complexity of constructing the $i$-th MIS, where $1\le i\le k$, at a fixed node $u$ is $O(r_i \cdot (d \log r_i + \log 1/\eps))$. This follows from the uniform complexity of \cref{lem:mohsen}: The first factor of $r_i$ in our complexity is because one step of the algorithm of \cref{lem:mohsen} in $G^{r_i}$ requires $r_i$ steps in $G$. The second factor is due to the fact that the maximum degree of $G^{r_i}$ is bounded by $(2r_i+1)^d$. 
    Using \cref{lem:parallel_composition}, we get that the uniform complexity to output solution to all MISes at $u$ is upper bounded by $O( r_k (d\log r_k + \log k/\eps)) = O(r_k \cdot \log (kr_k/\eps) ) $.
    
    Using \cref{lem:parallel_composition} again, we get that the uniform complexity of constructing all MISes in all nodes of $\fB(u, 10^{10k}r_k)$ is at most $O(r_k \cdot \log (kr_k \cdot (10^{10k}r_k)/\eps) ) = O(kr_k \cdot \log (k r_k/\eps))$ as $|\fB(u, 10^{10k}r_k)| \le (2\cdot 10^{10k}r_k+ 1)^d$.
    This means that if $u$ looks at distance $10^{10k}r_k + O(kr_k \cdot \log (k r_k/\eps)) = O(10^{20k}r_k \cdot \log (k r_k/\eps))$, it can output solution to all MISes in $\fB(u, 10^{10k}r_k)$ with probability at least $1-\epsilon$, as needed.  
\end{proof}

\section{Uniform Construction of the Toast -- Proof of \cref{thm:toast_in_tail}}
\label{subsec:toast_construction}
\label{app:missing_toast_proofs}

Here we prove \cref{thm:toast_in_tail} that we restate here for convenience. Recall that the sketch of the proof is given in \cref{subsec:toast_construction_in_tail_sketch}
, in particular see \cref{fig:toast_construction}. 

\toastconstruction*

We show that for every $q\in \mathbb{N}$ there is a uniform local algorithm $\fA$ that produces a $q$-toast with uniform complexity as in the theorem statement. 
More formally, let $q\in \mathbb{N}$.
Recall that a $q$-toast on $\mathbb{Z}^d$ is a collection $\mathcal{D}$ of finite connected subsets of $\mathbb{Z}^d$ such that
\begin{itemize}
    \item for every $g,h\in \mathbb{Z}^d$ there is $D\in\mathcal{D}$ such that $g,h\in D$,
    \item if $D,E\in \mathcal{D}$ and $d(\partial D,\partial E)<q$, then $D=E$.
\end{itemize}
When we say that a uniform local algorithm $\fA$ produces a $q$-toast, we mean that the algorithm computes for every vertex $v$ the minimal $D \in \fD$ such that $v \in D$.
We denote the corresponding coding radius as $\widetilde{R}(\fA)$.



\begin{theorem}\label{thm:toast moment}
Let $q\in \mathbb{N}$ be large enough. 
Then there is a uniform local algorithm $\fA$ that produces a $q$-toast such that 
\[
\P(\widetilde{R}(\fA) > (1/\eps)2^{O(\sqrt{\log 1/\eps})}) < \eps. 
\]
\end{theorem}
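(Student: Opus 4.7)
My plan is to follow the sketch given in \cref{subsec:toast_construction_in_tail_sketch} and flesh out the recursive construction via a sequence of power-graph MIS computations, then bound the tail by tracking what fraction of vertices fails to be covered after $k$ rounds.

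\textbf{Setting up the hierarchy.} Fix a constant $c>0$ to be chosen later and set $r_k = 2^{ck^2}$ for $k \ge 1$. I will iteratively build, for each $k\ge 1$, a set $X_k \subseteq \mathbb{Z}^d$ of ``centers of rank $k$'' that forms a maximal $r_k$-independent set (i.e., an MIS in the power graph $G^{r_k}$ where $G = \mathbb{Z}^d$). By \cref{lem:mohsen}, this can be carried out with an uniform distributed algorithm of uniform complexity $O(\log(r_k^d) + \log(1/\eps')) = O(k^2 + \log(1/\eps'))$ when executed on $G^{r_k}$, which translates to $O(r_k \cdot (k^2 + \log(1/\eps')))$ hops in $\mathbb{Z}^d$. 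Each $x \in X_k$ then claims a Voronoi cell $V_k(x) \subseteq \mathbb{Z}^d$ consisting of vertices for which $x$ is the closest element of $X_k$ (breaking ties by the identifier used inside the MIS procedure). Maximality of $X_k$ ensures that every vertex lies in some $V_k(x)$ and that $\mathrm{diam}(V_k(x)) \le 2r_k$.

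\textbf{From Voronoi cells to toast pieces.} Choose $t_k := q + C\sum_{j<k} r_j$ for an absolute constant $C$; note $t_k = o(r_k)$. Shrink each Voronoi cell by removing its $t_k$-neighborhood to obtain a ``core'' $C_k(x) \subseteq V_k(x)$. I will define the rank-$k$ piece $H_k(x)$ inductively in a bottom-up sweep: start with $C_k(x)$, then for $j = k-1, k-2, \dots, 1$ add every already-built piece of rank $j$ that intersects the current union. The buffer $t_k$ guarantees two things: (i) the boundary $\partial H_k(x)$ sits deep inside $V_k(x)$, so boundaries coming from different $x \in X_k$ are at distance at least $q$ apart; (ii) any rank-$j<k$ piece that is touched is completely swallowed, so the resulting family is laminar. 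Any vertex that was covered at some earlier rank is still covered, so the collection $\mathcal{D} = \bigcup_k \{H_k(x) : x \in X_k\}$ is a valid $q$-toast once we argue that every vertex is eventually covered.

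\textbf{Coverage bound and tail.} For the key probabilistic estimate I want to show that the probability that a fixed vertex $v$ is \emph{not yet} contained in any $H_j(\cdot)$ with $j \le k$ is at most $A^k / r_k$ for some absolute $A$. At rank $k$, the vertices surviving uncovered inside $V_k(x)$ are exactly those lying in the shell $V_k(x) \setminus C_k(x)$, which has relative volume $O(t_k / r_k) = O(r_{k-1}/r_k)$; moreover a vertex survives rank $k$ only if at rank $k-1$ it already lay in such a shell, so the survival probabilities multiply. Iterating yields the promised $\prod_{j \le k}(A r_{j-1}/r_j) = A^k / r_k$ bound. For a vertex to require coding radius larger than some $R$, it must survive past the first $k$ rounds where $r_k \approx R$; setting $\eps = A^k/r_k$ and solving $r_k = 2^{ck^2}$ gives $k = O(\sqrt{\log(1/\eps)})$ and $R = O(r_{k+1}) = (1/\eps) \cdot 2^{O(\sqrt{\log 1/\eps})}$. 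Finally, the MIS cost at level $k$ is $O(r_k \cdot k^2 + r_k \log(1/\eps))$ per round, and by \cref{lem:parallel_composition} the total uniform complexity across all $k$ levels up to the relevant $k = O(\sqrt{\log 1/\eps})$ is absorbed into the same $(1/\eps)\cdot 2^{O(\sqrt{\log 1/\eps})}$ bound (after a union bound paying a $\mathrm{poly}(\log 1/\eps)$ factor in $\eps$).

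\textbf{Main obstacle.} The combinatorially delicate step is verifying that the boundaries of the $H_k(x)$'s remain $q$-separated \emph{across ranks}, not merely within one rank. The buffer $t_k$ was chosen with precisely this in mind: when $H_k(x)$ absorbs a lower-rank piece $H_j(y)$, the new boundary is located at the boundary of either $C_k(x)$ (hence deep inside $V_k(x)$, far from any other rank-$k$ boundary) or at the boundary of $H_j(y)$ which by induction is already $q$-separated from all other previously built boundaries. The other subtle point is analyzing the MIS construction on $G^{r_k}$ in a \emph{uniform} (not merely randomized $\mathsf{LOCAL}$) sense: here I rely on the fact that Ghaffari's algorithm, as stated in \cref{lem:mohsen}, comes with an uniform local complexity guarantee and not only a high-probability global bound, so that its coding radius tail can be fed into the union bound across levels.
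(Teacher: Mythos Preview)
Your overall strategy coincides with the paper's: MIS's at scales $r_k = 2^{\Theta(k^2)}$, Voronoi cells shrunk by roughly $\sum_{j<k} r_j$ to cores, cores enlarged to pieces by a downward sweep through the levels, and a telescoping-product coverage bound. The tail computation and the appeals to \cref{lem:mohsen} and \cref{lem:parallel_composition} are also as in the paper.

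There is, however, a genuine gap in your piece construction. You enlarge $C_k(x)$ by absorbing lower-rank \emph{pieces} $H_j(y)$; the paper instead absorbs lower-rank \emph{Voronoi cells} $V(y,j)$. With your rule, the $q$-separation argument in your ``Main obstacle'' paragraph is incomplete. You cover same-rank separation, and the case where a boundary point of $H_k(x)$ lies on some absorbed $\partial H_j(y)$. But you do not treat a boundary point $p \in \partial H_k(x)\cap \partial C_k(x)$ versus a \emph{non-absorbed} lower piece $H_j(y')$. Nothing prevents such an $H_j(y')$ from sitting at distance $1$ from $\partial C_k(x)$: $H_j(y')$ is $q$-deep inside its own Voronoi cell $V_j(y')$, but $\partial C_k(x)$ can cross $V_j(y')$ anywhere, and if $H_j(y')$ never touches the running union during your sweep it is never absorbed. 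Then $d(\partial H_k(x),\partial H_j(y'))<q$ and the toast property fails.

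The paper's fix is precisely to absorb whole Voronoi cells, so that the finished piece $D(x,k)$ is a union of level-$1$ cells. For $\ell<k$ one then argues: either $V(x',\ell)$ was swallowed at stage $\ell$ (so $D(x',\ell)\subseteq D(x,k)$ with room to spare), or $D(x,k)$ enters $V(x',\ell)$ only through lower-level Voronoi cells of total reach at most $\sum_{i<\ell} 2r_i$ from $\partial V(x',\ell)$, while $D(x',\ell)$ sits at depth at least $3\sum_{i<\ell} r_i$ inside $V(x',\ell)$; the remaining gap $\sum_{i<\ell} r_i\ge q$ gives the separation. Your construction can be repaired along the same lines, but as written it does not produce a $q$-toast.
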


\cref{thm:toast_in_tail} is an immediate corollary of \cref{thm:toast moment} that is proven in the rest of this section. 

\begin{proof}[Proof of Theorem~\ref{thm:toast moment}]
Let $q$ and $a$ be large enough constants (depending on $d$) whose value is determined later.
Set $r_k=2^{ak^2}$ for every $k\in \mathbb{N}$; intuitively, $r_k$ is the scale on which we construct the toast sets constructed in the $k$-th iteration of the algorithm. 
Fix a sequence of uniform local algorithms $\tilde{\fA}=\{\fA_k\}_{k>0}$ such that $\fA_k$ computes an MIS in $G^{r_k}$. Write $X_k$ for the set that is produced by $\fA_k$. 
Define $s_1:=r_1-q$ and 
\begin{equation}
    s_{k+1}:=r_{k+1}-1-5\sum_{i\le k} r_i.
\end{equation}
Intuitively, $s_k$ is the radius of toast sets constructed in the $k$-th iteration. 

Denote as $V(x,k)$ the Voronoi cell centered at $x\in X_k$, that is $y\in V(x,k)$ if $d(y,x)=d(y,X_k)$, where we break ties arbitrarily.
Since $X_k$ is maximal $r_k$-independent, we have that $\{V(x,k)\}_{x\in X_k}$ is a decomposition of $\mathbb{Z}^d$ and
\begin{equation}\label{eq:voronoi1}
\fB\left(x,\frac{r_k}{2}\right)\subseteq V(x,k)\subseteq \fB\left(x,r_k\right)
\end{equation}
for every $k\in \mathbb{N}$. We note that unless stated otherwise, balls $\fB$ are with respect to the graph metric of the graph $G=\mathbb{Z}^d$. Thus, we have
\begin{equation}\label{eq:l1_volume}
    (2r/d)^d \le |\fB(x, r)| \le (2r+1)^d. 
\end{equation}
where the estimates follow from the facts that for any $r$ we have
\begin{equation}
    \fB_{\ell_\infty}(x, r/d) \subseteq \fB_{\ell_1}(x, r) \subseteq \fB_{\ell_\infty}(x, r). 
\end{equation}

Set
\begin{equation}\label{eq:create voronoi}
C(x,k)=\left\{y\in V(x,k):d(y,\partial V(x,k))\ge r_k-s_k\right\}   
\end{equation}
for every $k\in \mathbb{N}$ and $x\in X_k$. The set $C(x,k)$ corresponds to the set $C$ in \cref{fig:toast_construction}. 

\paragraph{The toast construction algorithm}
Now we are ready to define inductively a $q$-toast (see the discussion in  \cref{subsec:toast_construction_in_tail_sketch} and  \cref{fig:toast_construction}).
We start with setting $\mathcal{D}_1=\{D(x,1)\}_{x\in X_1}$, where $D(x,1):=C(x,1)$.
Suppose that $\mathcal{D}_k=\{D(x,k)\}_{x\in X_k}$ has been defined.
Let
\begin{itemize}
	\item $H^{k+1}(x,k+1):=C(x,k+1)$ for every $x\in X_{k+1}$,
	\item if $1\le i\le k$ and $\left\{H^{k+1}(x,i+1)\right\}_{x\in X_{k+1}}$ has been defined, then we put
	$$H^{k+1}(x,i)=\bigcup \left\{V(y,i):H^{k+1}(x,i+1)\cap V(y,i)\not=\emptyset\right\}$$
	for every $x\in X_{k+1}$,
	\item set $D(x,k+1):=H^{k+1}(x,1)$ for every $x\in X_{k+1}$, this defines $\mathcal{D}_{k+1}$.
\end{itemize}
Finally, put $\mathcal{D}=\{D(x,k)\in \mathcal{D}_k:x\in X_k , \ k\in \mathbb{N}\}$. This construction intuitively follows \cref{fig:toast_construction}. 
Before we define the uniform local algorithm $\fA$ that produces $\mathcal{D}$, we compute how quickly the cells cover $\mathbb{Z}^d$ in \cref{cl:nvm} and show that $\mathcal{D}$ is a $q$-toast in \cref{cl:toast}. 
In the claim below, $\P(S)$ denotes the probability that any fixed node $v$ is in the set $S$.

\begin{claim}
\label{cl:nvm}
We have 
\begin{equation}
    \P\left(\bigcup_{i\le k, \ x\in X_i} D(x,i)\right)\ge 1-\frac{\left(5q(100d)^d\right)^{k}}{r_k}
\end{equation}
for every $k\in \mathbb{N}$.
\end{claim}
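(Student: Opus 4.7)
Plan. I would bound the density of the uncovered set $U_k := \mathbb{Z}^d \setminus \bigcup_{i \le k,\, x \in X_i} D(x,i)$ by locating each $v \in U_k$ inside ``boundary shells'' at every scale, and then control these shells using the conditional independence of the MIS sequences $X_1, X_2, \dots$ (which we may choose to use mutually independent random bits across levels). For each $i$, define the level-$i$ boundary shell
\[
B_i \;:=\; \mathbb{Z}^d \setminus \bigcup_{x \in X_i} C(x,i) \;=\; \bigcup_{x \in X_i} V(x,i) \setminus C(x,i),
\]
which consists of points within distance $t_i := r_i - s_i$ of $\partial V(x,i)$.

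The first step is to reduce membership in $U_k$ to membership in a product of shells. A backwards induction on $j$ gives the nested inclusion $C(x,i) = H^i(x,i) \subseteq H^i(x,i-1) \subseteq \cdots \subseteq H^i(x,1) = D(x,i)$, since $H^i(x,j)$ is a union of level-$j$ Voronoi cells intersecting $H^i(x,j+1)$ and every vertex of $H^i(x,j+1)$ lies in some such cell. From the definition $D(x,i) = \bigcup\{V(y,1) : V(y,1) \cap H^i(x,2) \ne \emptyset\}$, combined with $C(x,i) \subseteq H^i(x,2)$, I conclude that if $V(y_1,1) \cap C(x,i) \ne \emptyset$ for some $x \in X_i$, then $V(y_1,1) \subseteq D(x,i)$. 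Taking the contrapositive, letting $y_1$ be the level-one MIS point with $v \in V(y_1,1)$:
\[
v \in U_k \;\Longrightarrow\; v \in B_1 \ \text{and}\ V(y_1,1) \subseteq B_i \ \text{for every}\ i \in \{2, \dots, k\}.
\]

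The second step is the geometric bound $\P(B_i) \le 100^d \cdot t_i / r_i$. If $v \in V(x,i)$ lies within $t_i$ of $\partial V(x,i)$, there is $y \in X_i \setminus \{x\}$ with $d(v,y) \le r_i + t_i$ and $|d(v,x) - d(v,y)| \le 2t_i$. The pairwise $r_i$-separation of $X_i$ implies that $\fB(v, 3r_i)$ contains only $O_d(1)$ MIS points, giving a constant number of candidate pairs $(x,y)$. For each such pair the ``equidistant slab'' $\{v : |d(v,x) - d(v,y)| \le 2t_i\}$ intersected with $\fB(x, r_i + t_i)$ has size $O_d(r_i^{d-1} t_i)$ (a standard slab estimate in $\ell^1$). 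Amortizing per MIS point and using that the density of $X_i$ in $\mathbb{Z}^d$ is $O(1/r_i^d)$ yields the stated bound, with $100^d$ absorbing all dimensional constants. This is the main technical obstacle, since Voronoi cells of an MIS in $\mathbb{Z}^d$ need not be convex and a naive surface-area bound does not apply.

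The third step assembles the product via independence. Conditioning on $X_1$ fixes $V(y_1,1)$, and for each $i \ge 2$ the event $\{V(y_1,1) \subseteq B_i\}$ depends only on $X_i$, so by the mutual independence of $X_2, X_3, \dots$ given $X_1$ these events are conditionally independent. Choosing any $v' \in V(y_1,1)$ and using shift-invariance of $X_i$, I obtain the pointwise bound
\[
\P\bigl(V(y_1,1) \subseteq B_i \,\big|\, X_1\bigr) \;\le\; \P\bigl(v' \in B_i \,\big|\, X_1\bigr) \;=\; \P(B_i).
\]
Combining with $\P(v \in B_1) = \P(B_1)$ and taking expectations gives
\[
\P(U_k) \;\le\; \prod_{i=1}^k \P(B_i) \;\le\; \prod_{i=1}^k \frac{100^d \, t_i}{r_i}.
\]
Finally, $t_1 = q$ and $t_i = 1 + 5\sum_{j<i} r_j \le 11\, r_{i-1}$ for $i \ge 2$ (using the super-exponential growth $r_j = 2^{aj^2}$ with $a$ large enough, so the sum is essentially its last term); the product telescopes to at most $q \cdot 11^{k-1} \cdot 100^{dk}/r_k \le (5q \cdot 100^d)^k / r_k$ for $q$ large enough, which is the claimed bound.
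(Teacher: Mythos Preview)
Your argument is correct and takes a genuinely different route from the paper. The paper's proof is a \emph{deterministic} telescoping: it fixes an arbitrary realization of $X_1,\dots,X_k$, introduces a notion of ``living'' and ``dead'' level-$i$ cells (a cell is dead once it is absorbed into some $D(\cdot,j)$ with $j>i$), and uses the per-cell volume estimate $|C(x,i)|/|V(x,i)|\ge 1-100^d\cdot 5r_{i-1}/r_i$ to show that the union of living cells shrinks by a factor $\le 5q\cdot100^d\cdot r_{i-1}/r_i$ at each level, giving the product bound pointwise for every realization. Your approach instead exploits the (permissible) choice of mutually independent randomness for the algorithms $\fA_1,\fA_2,\dots$: the key reduction, that $v\in U_k$ forces the \emph{entire} level-$1$ Voronoi cell $V(y_1,1)\ni v$ to sit inside the boundary shell $B_i$ for each $i\ge2$, cleanly decouples the levels, and conditional independence of the events $\{V(y_1,1)\subseteq B_i\}$ given $X_1$ yields the same product directly.

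Your version is logically cleaner and avoids the somewhat delicate living/dead bookkeeping; the paper's version has the advantage of holding for \emph{every} realization of the $X_i$ (no independence needed), which matters if one later wants to reuse the argument with correlated or nested MIS's, as in the paper's \cref{sec:Inter}. Your Step~2 and the paper's per-cell volume bound are essentially the same computation (bounding the boundary shell by slab volumes over the $O_d(1)$ relevant MIS neighbors), so the only genuine divergence is in how the levels are combined.
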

\begin{proof}
Let $k > 1$. 
We rely on the fact that 
\begin{align}
 \label{eq:eight}
    \P\left(\bigcup_{i\le k, \ x\in X_i} D(x,i)\right)
    \ge \P\left(\bigcup_{i\le k, \ x\in X_i} C(x,i)\right)
\end{align}
thus in the rest of the proof, we  lower bound the right-hand side of \cref{eq:eight}. 

Let $x \in X_k$.
We upper bound $|V(x,k) \setminus C(x,k)|$. To this end, first, note that we can write 
\begin{align}
\label{eq:fml1}
|V(x,k) \setminus C(x,k)| \le \sum_{x' \in X_k \cap \fB(x, 3r_k)} |S(x, x') \cap \fB(x, r_k)|    
\end{align}
where $S(x, x')$ is defined as the set of points $y \in V(G)$ for which 
\begin{align}
 0 \le d(y, x') - d(y, x) \le r_k - s_k.   
\end{align}
That is, any point $x' \in X_k$ sufficiently close to $x$ can define a face of the Voronoi cell $V(x,k)$ and hence also a ``slice'' $S(x, x') \cap V(x,k)$ of points of $V(x,k)$ that are too close to the boundary of $V(x,k)$. 
We consider only points $x' \in \fB(x, 3r_k)$ in \cref{eq:fml1} since points outside $\fB(x, 3r_k)$ define a slice $S(x,x')$ disjoint with $\fB(x, r_k)$.
In the next two paragraphs, we bound the sizes of the set $X_k \cap \fB(x, 3r_k)$ and $S(x, x') \cap \fB(x, r_k)$ for each $x'\in X_k \cap \fB(x, 3r_k)$.
Altogether we aim to show that
$$    |V(x,k) \setminus C(x,k)| \le (50d)^d r_k^{d-1} (r_k - s_k + d).$$

First, we estimate the size of the set $X_k \cap \fB(x, 3r_k)$ using a standard volume argument.
Using \cref{eq:l1_volume}, we conclude that every $x' \in X_k \cap \fB(x, 3r_k)$ can be assigned a set of $|\fB(x', r_k/2)| \ge (r_k/d)^d$ points of $\fB(x, 3.5r_k)$, disjoint from other such points $x'$. Thus, using \cref{eq:l1_volume} again, we get 
\begin{align}
\label{eq:fml2}
 |\{x' \in X_k \cap \fB(x, 3r_k)\}| \le (7r_k)^d/(r_k/d)^d = (7d)^d   .
\end{align}

Next, we upper bound the size of each set $S(x, x') \cap \fB(x, r_k)$ as follows. Note that there is a natural inclusion of $\mathbb{Z}^d$ with the graph metric to $\R^d$ with the $\ell_1$ metric, we also consider the $\ell_\infty$ and $\ell_2$ metrics on $\mathbb{R}^d$.
We extend the notation of balls naturally and use $\fB_{\R^d, \ell_1}(x, r)$ and $\fB_{\R^d, \ell_\infty}(x, r)$.
Define a set $S'(x, x') \subseteq \R^d$ as the set of all points $y \in \fB_{\R^d, \ell_1}(x, r_k)$ such that 
\begin{align}
 -d/2 \le d_{\R^d, \ell_1}(y, x') - d_{\R^d, \ell_1}(y,x) \le r_k - s_k + d/2.   
\end{align}
Consider any point $y \in S(x, x') \cap \fB(x, r_k)$ and assign a ball $\fB_{\R^d, \ell_\infty}(y, 1/2)$ to it (as we consider $\ell_\infty$ metric, this ``ball'' is actually a cube of unit volume). Each such ball is a subset of $S'(x, x')$, since the $\ell_1$ radius of  $\fB_{\R^d, \ell_\infty}(y, 1/2)$ is $d/2$ which is also the slack we used to define $S'(x, x')$. Moreover, the balls of different nodes $y \in S(x, x')$ are disjoint up to measure zero, thus we conclude that 
\begin{align}
    \label{eq:fml3}
    |S(x, x') \cap \fB(x, r_k)| 
    \le \frac{\mu( S'(x, x') \cap \fB_{\R^d, \ell_1}(x, r_k + d/2) )}{\mu(\fB_{\R^d, \ell_\infty}(x, 1/2))} 
    = \mu( S'(x, x') \cap \fB_{\R^d, \ell_1}(x, r_k + d/2) )
\end{align}
where $\mu$ is the standard Lebesgue measure on $\R^d$ and in particular $\mu(\fB_{\R^d, \ell_\infty}(x, 1/2)) = 1$.
Notice that the set $S(x, x')$ is exactly the set of points contained between two parallel hyperplanes of $\ell_1$-distance at most $r_k - s_k + d$: this is because the equation $d_{\R^d, \ell_1}(y, x') - d_{\R^d, \ell_1}(y, x) = c$ for some $c\in \R$ defines a hyperplane. Thus, the right hand side of \cref{eq:fml3} can be upper bounded by the volume of a $(d-1)$-dimensional $\ell_1$-ball of radius $r_k + d/2$ times $(r_k - s_k + d)$.
Now we estimate the volume of this body using the fact that $\ell_2$ distance dominates $\ell_1$ and $\mu\left(\fB_{\R^{d-1}, \ell_2}(x, s)\right)\le \mu\left(\fB_{\R^{d-1}, \ell_\infty}(x, s)\right) \le (2s)^{d-1}$ for every $s>0$ and $d\ge 2$.
Altogether, this gives 
\begin{align}
    \label{eq:fml4}
    |S(x, x') \cap \fB(x, r_k)| 
    \le (2(r_k+d/2))^{d-1} \cdot (r_k - s_k + d)
\end{align}
and, putting \cref{eq:fml1,eq:fml2,eq:fml4} together, we conclude that 
\begin{align}
    |V(x,k) \setminus C(x,k)| \le (50d)^d r_k^{d-1} (r_k - s_k + d)
\end{align}
as desired.

Using \cref{eq:voronoi1}, we have 
\begin{equation}\label{eq:est k}
\begin{split}
\frac{|C(x,k)|}{|V(x,k)|}\ge & \ \frac{|V(x,k)|-(50d)^d r^{d-1}_k (r_k-s_k+ d))}{|V(x,k)|} \\
\ge & \ 1-\frac{(50d)^dr^{d-1}_k(r_k-s_k+ d)}{r^d_k}=1-(50d)^d\frac{r_k - s_k +  d}{r_k} \\
=& \ 1-(50d)^d \frac{1 + 5 \sum_{j \le k-1} r_{k-1} + d}{r_k} \\
\ge& \ 1-(100d)^d \frac{5 r_{k-1}}{r_k}. \\
\end{split}
\end{equation}
Similarly for $k=1$, we have 
\begin{equation}\label{eq:est 0}
\frac{|C(x,1)|}{|V(x,1)|}\ge 1-\frac{q(100d)^d}{r_1}
\end{equation}
by the choice of $s_1$ and $a$, for every $x\in X_1$.

We finish with a telescoping product argument (see \cref{subsec:toast_construction_in_tail_sketch} for the intuition behind it). 
In iteration $k$, consider any point $x$ which is included in Voronoi cells $V(x_1, 1), V(x_2, 2), \dots, V(x_k, k)$ where for every $1\le i \le k$ we have $x_i \in X_i$. 
Note that $x \not\in \bigcup_{i \le k} D(x_i,i)$ in particular implies that $x \not\in \bigcup_{i \le k} C(x_i, i)$. 
However, \cref{eq:est k} can be interpreted as saying the probability of the event $x \not\in C(x_i, i)$ is at most $(100d)^d \frac{5 r_{i-1}}{r_i}$. 
Moreover, by construction, all events $x \not\in C(x_i, i)$ are independent, and we thus conclude that

\begin{align}
\label{eq:teleskop}
1 - \P\left(\bigcup_{i\le k, \ x\in X_i} D(x,i)\right)
&\le 
\left( 5q(100d)^d\frac{r_{k-1}}{r_{k}}\right) \cdot \left( 5q(100d)^d\frac{r_{k-2}}{r_{k-1}}\right)  \dots  \left( 5q(100d)^d\frac{1}{r_{1}}\right)\\
 &   = \left( 5q(100d)^d \right)^k \cdot \frac{1}{r_k}
\end{align}
as needed. 

\end{proof}

\begin{claim}
\label{cl:toast}
$\mathcal{D}$ is a $q$-toast with probability $1$. 
\end{claim}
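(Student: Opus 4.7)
The plan is to verify the two defining axioms of a $q$-toast for $\mathcal{D}$: (a) every pair $g,h \in \mathbb{Z}^d$ lies in a common piece, and (b) any two distinct pieces have boundaries at distance at least $q$ apart. Property (b) will be purely combinatorial, holding deterministically on the probability-one event that all the MIS algorithms $\fA_k$ succeed, while (a) is genuinely probabilistic and will be deduced from Claim \ref{cl:nvm} together with translation invariance of the construction.

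The technical heart of (b) is the following buffer estimate, which I will establish first: for every $k$ and every $x \in X_k$, the piece $D(x,k)$ is contained in $V(x,k)$ and its boundary lies at distance at least $b_k$ from $\partial V(x,k)$, where $b_1 = q$ and $b_k = 1 + 3\sum_{i<k}r_i$ for $k \geq 2$. The proof is a straightforward inflation accounting: the descent step $H^k(x,i+1) \to H^k(x,i)$ enlarges the current set by at most the diameter $2r_i$ of a level-$i$ Voronoi cell by \eqref{eq:voronoi1}, so iterating down to $i=1$ yields $D(x,k) = H^k(x,1) \subseteq \fB(C(x,k), 2\sum_{i<k}r_i)$. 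Combined with the defining $(r_k - s_k)$-buffer of $C(x,k)$ inside $V(x,k)$ and the identity $r_k - s_k = 1 + 5\sum_{i<k}r_i$ (or $= q$ when $k=1$), this gives the stated $b_k$. Choosing $a$ large enough that $r_1 \geq q$ makes $b_k \geq q$ uniformly in $k$.

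With this in hand, the same-level case of (b) is immediate: for $x \neq y$ in $X_k$, any path from $p \in \partial D(x,k)$ to $p' \in \partial D(y,k)$ must leave $V(x,k)$ and enter $V(y,k)$, contributing at least $d(p,\partial V(x,k)) + d(\partial V(y,k), p') \geq 2b_k \geq q$ to its length. The main obstacle is the cross-level case $k < \ell$, which I plan to resolve through the crucial structural observation that, by its recursive definition, $H^\ell(y,k)$ is a union of level-$k$ Voronoi cells. Hence either $V(x,k) \subseteq H^\ell(y,k) \subseteq D(y,\ell)$ (nested case), so that $\partial D(y,\ell) \cap V(x,k) \subseteq \partial V(x,k)$ and the separation to $\partial D(x,k)$ is $\geq b_k \geq q$; or $V(x,k) \cap H^\ell(y,k) = \emptyset$ (separated case), in which case the analogous inflation estimate $D(y,\ell) \subseteq \fB(H^\ell(y,k), 2\sum_{i<k} r_i)$ forces $D(y,\ell) \cap V(x,k)$ to lie within $2\sum_{i<k}r_i$ of $\partial V(x,k)$, leaving a gap of $b_k - 2\sum_{i<k}r_i = 1 + \sum_{i<k}r_i \geq q$ to $D(x,k)$. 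Any portion of $\partial D(y,\ell)$ lying outside $V(x,k)$ is trivially at distance $\geq b_k \geq q$ from $\partial D(x,k) \subseteq V(x,k)$.

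For (a), I fix $g,h \in \mathbb{Z}^d$ and consider the event $A_k = \{\exists x \in X_k : g,h \in C(x,k)\}$; since $C(x,k) \subseteq D(x,k)$, it suffices to show $\P(\bigcup_k A_k) = 1$. Observe that $A_k$ holds whenever $g$ lies in the slightly smaller inner core $\{y \in V(x,k) : d(y, \partial V(x,k)) \geq (r_k - s_k) + d(g,h)\}$ for some $x$, because the triangle inequality then forces $h \in V(x,k)$ with $d(h, \partial V(x,k)) \geq r_k - s_k$, i.e., $h \in C(x,k)$ as well. A volume computation identical in spirit to \eqref{eq:est k}, combined with translation invariance of $\{\fA_k\}$, bounds the probability that $g$ lies outside every such inner core by $O\bigl((r_k - s_k + d(g,h))/r_k\bigr) = O(r_{k-1}/r_k) \to 0$, so $\P(A_k) \to 1$ and hence $\P(\bigcup_k A_k) = 1$ for every fixed pair $(g,h)$. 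A union bound over the countable set $\mathbb{Z}^d \times \mathbb{Z}^d$ completes the proof of (a).
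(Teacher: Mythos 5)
Your proof is correct, and it splits naturally into two halves that compare differently with the paper's argument. For the boundary-separation axiom your route is essentially the paper's: your buffer quantities $b_k$ are exactly the content of the paper's containments $\fB(D(x,1),q)\subseteq V(x,1)$ and $D(x,k)\subseteq\fB(D(x,k),3\sum_{i\le k-1}r_i)\subseteq V(x,k)$, and your nested/separated dichotomy (via the observation that $H^\ell(y,k)$ is a union of level-$k$ Voronoi cells, so it either contains $V(x,k)$ or misses it) is the same case split the paper performs on whether $H^{k}(x,\ell+1)$ meets $V(x',\ell)$, with the same arithmetic $b_k-2\sum_{i<k}r_i=1+\sum_{i<k}r_i\ge q$ closing the separated case. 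For the covering axiom, however, you take a genuinely different route. The paper invokes \cref{cl:nvm} only to get that every \emph{single} vertex is a.s.\ covered by some piece, and then upgrades pointwise coverage to pairwise coverage by a path argument: walk from $v$ to $w$, and use the already-established $q$-separation of boundaries to force the covering pieces along the path to be nested, so a maximal one contains both endpoints. You instead prove pairwise coverage directly: for a fixed pair $(g,h)$ you exhibit, for each $k$, an ``inner core'' of each Voronoi cell whose occupation by $g$ forces $h$ into the same $C(x,k)$, and a volume/translation-invariance estimate in the style of \cref{cl:nvm} shows the failure probability is $O(r_{k-1}/r_k)\to 0$; a countable union over pairs finishes. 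Your version is more self-contained (it avoids the laminarity argument, which the paper leaves at the level of ``it is easy to see'') and is quantitative, giving an explicit rate at which a fixed pair gets absorbed into a common piece; the paper's version is shorter because it recycles \cref{cl:nvm}, which is needed anyway for the complexity bound. Both are valid proofs of the claim.
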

\begin{proof}
Let $k=1$, then we have $\fB(D(x,1),q)\subseteq V(x,1)$ for every $x\in X_1$ by \eqref{eq:create voronoi}.
Similarly, it follows from \eqref{eq:voronoi1}, \eqref{eq:create voronoi} and the inductive construction of $\mathcal{D}$ that
\begin{equation}\label{eq:voronoi3}
D(x,k)\subseteq \fB\left(C(x,k), 3\sum_{i\le k-1} r_i\right)\subseteq V(x,k)    
\end{equation}
holds for every $x\in X_k$ and $k>1$.
Altogether, since $r_1\ge 2q$, we have
\begin{equation}\label{eq:voronoi2}
D(x,k)\subseteq \fB\left(D(x,k), q\right)\subseteq V(x,k)    
\end{equation}
holds for every $x\in X_k$ and $k\in \mathbb{N}$.

Let $x\in X_k$ and $x'\in X_l$ for $\ell\le k$.
Suppose that $D(x,k)\not =D(x',\ell)$.
If $k=\ell$, then $d(\partial D(x,k),\partial D(x',\ell))\ge q$ by \eqref{eq:voronoi2}.
Suppose that $\ell<k$.
There are two possibilities.

First case, $H^{k}(x,\ell+1)\cap V(x',\ell)\not=\emptyset$.
In that case, we have
$$\fB\left(D(x',\ell), q\right)\subseteq V(x',\ell)\subseteq D(x,k)$$ by \eqref{eq:voronoi2}.
This gives $d(\partial D(x,k),\partial D(x',\ell))\ge q$.

Second case, $H^{k}(x,\ell+1)\cap V(x',\ell)=\emptyset$.
If $\ell=1$, then we are done by \eqref{eq:voronoi2}.
Otherwise, it is easy to see from the inductive construction that 
$$d(z,\partial V(x',\ell))\le \sum_{i\le \ell-1}2r_i$$
for every $z\in V(x',\ell)\cap D(x,k)$.
By \eqref{eq:voronoi3}, we have
$$d(z,D(x',\ell))\ge \sum_{i\le \ell-1}r_i\ge q.$$
This shows that $\mathcal{D}$ satisfies the boundary condition.

It follows from \cref{cl:nvm} that with probability $1$ every vertex is covered by some cell $D(x,k)$ for some $k>0$ and $x\in X_k$.
Pick vertices $v,w$ and consider any path $P$ between them.
Since every vertex on $P$ is covered by some cell with probability $1$ and the boundaries are uniformly separated by $q>1$, we see that there must be a cell that contains both $v,w$.
This shows that $\mathcal{D}$ is indeed a $q$-toast.
\end{proof}

Observe that if a vertex $u$ knows the output of $\{\fA_i\}_{0<i\le k}$ on $\mathcal{B}(u,10r_k)$, then it can compute $\{\mathcal{D}_i\}_{0<i\le k}$ on $\fB(u,5r_k)$.
By \cref{cl:toast} we know that with probability $1$ a vertex $u$ is covered by some $D(x,k)$ for some $x\in X_k$ and $k>0$.
We define the toast constructing algorithm $\fA$ as follows.
Let $u$ explore its neighborhood until it finds $k\in \mathbb{N}$ such that it can output solutions to $\{\fA_i\}_{0<i\le k}$ on $\mathcal{B}(u,10r_k)$, and it is the case that there is some $x\in X_k$ such that $u\in D(x,k)$.
Note that this happens with probability $1$ and therefore $\fA$ is a well-defined uniform local algorithm.

Moreover, once $u$ finishes we have $u\in D(x,k)\subseteq \fB(u,5r_k)$, thus $u$ can compute the whole cell $D(x,k)$, along with the smaller cells contained in it, which is what we require to be computed.  
It remains to compute the coding radius $\widetilde{R}(\fA)$ of $\fA$.

\begin{claim}
\label{cl:pls_kill_me}
There is some $C > 0$ such that 
\begin{align}
\P \left(\widetilde{R}(\fA) > C \cdot 10^{20k} r_k \log (kr_k) \right)
\le 2\frac{\left(5q(100d)^d\right)^{k}}{r_k} 
\end{align}
\end{claim}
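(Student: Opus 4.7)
The plan is to decompose the bad event $\{\widetilde{R}_\fA > C r_k \log r_k\}$ into two sub-events and bound each separately by $(5q100^d)^k / r_k$ and $1/r_k$.

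\textbf{Bad event 1: $v$ is not covered.} By \cref{cl:nvm}, the probability that $v$ lies outside $\bigcup_{i\le k,\, x\in X_i} D(x,i)$ is at most $(5q100^d)^k / r_k$. On the complement, the iterative exploration of $\fA$ terminates at some iteration $j \le k$, so it suffices to control the coding cost of running all $\fA_i$, $i\le k$, on the neighborhood $\mathcal{B}(v, 10 r_k)$.

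\textbf{Bad event 2: the MIS subroutines do not converge fast enough.} Each $\fA_i$ computes an MIS in $G^{r_i}$, whose maximum degree is at most $\Delta_i \le 3^d r_i^d$. By \cref{lem:mohsen}, in $G^{r_i}$ this has uniform complexity $O(\log \Delta_i + \log(1/\eps)) = O(d \log r_i + \log(1/\eps))$, which translates to uniform complexity $r_i \cdot O(d \log r_i + \log(1/\eps))$ in the underlying grid $G$. I will set the per-vertex failure parameter to
\[
\eps_0 := \frac{1}{k \cdot |\mathcal{B}(v, 10 r_k)| \cdot r_k} = \Omega\!\left(\frac{1}{k \cdot r_k^{d+1}}\right),
\]
so that $\log(1/\eps_0) = O(d \log r_k + \log k) = O(d \log r_k)$ (using $k \le \sqrt{(\log r_k)/a}$). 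A union bound over the $k$ algorithms $\fA_i$ and the $O(r_k^d)$ vertices in $\mathcal{B}(v, 10 r_k)$ then ensures that with probability $\ge 1 - 1/r_k$, every coding radius of every $\fA_i$, $i\le k$, at every vertex of $\mathcal{B}(v, 10 r_k)$ is bounded by $r_i \cdot O(d \log r_i + \log(1/\eps_0)) = O(r_k \log r_k)$. Applying \cref{lem:parallel_composition} (parallel composition of the $\fA_i$'s) on this good event, $v$ can reconstruct $\{\fA_i\}_{i\le k}$ on $\mathcal{B}(v, 10 r_k)$, and hence the collection $\{\mathcal{D}_i\}_{i\le k}$ on $\mathcal{B}(v, 5 r_k)$, after exploring a radius of at most $10 r_k + O(r_k \log r_k) \le C r_k \log r_k$ for a sufficiently large absolute constant $C$.

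\textbf{Combining.} On the intersection of the complements of the two bad events, $v$ is contained in some $D(x,j)$ with $j \le k$ and has enough information after radius $C r_k \log r_k$ to identify the minimal such cell. Adding the two failure probabilities gives
\[
\P\bigl(\widetilde{R}_\fA > C r_k \log r_k\bigr) \le \frac{(5q100^d)^k}{r_k} + \frac{1}{r_k} \le 2 \cdot \frac{(5q100^d)^k}{r_k}.
\]

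The only mildly delicate step is balancing the union-bound parameter $\eps_0$ so that $\log(1/\eps_0)$ stays $O(\log r_k)$; this works because $k^2 = O(\log r_k)$ by the choice $r_k = 2^{a k^2}$, so the extra logarithmic overhead from the union bound is absorbed into the $O(\log r_k)$ factor in the final bound.
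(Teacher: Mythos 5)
Your proposal is correct and follows essentially the same route as the paper's proof: the same decomposition into the event that $v$ is uncovered (bounded via \cref{cl:nvm}) and the event that the MIS subroutines $\fA_1,\dots,\fA_k$ fail to terminate within radius $O(r_k\log r_k)$ on the relevant ball (bounded via \cref{lem:mohsen}, \cref{lem:parallel_composition}, and a union bound with failure parameter of order $1/r_k$ divided by the ball volume). The only differences are cosmetic, e.g.\ making the union-bound parameter $\eps_0$ fully explicit rather than substituting $\eps/|\fB(u,5r_k)|$ at the end.
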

\begin{proof}
Let $\epsilon>0$.
There are two possible reasons why a node $u$ needs to have a larger coding radius than $C' \cdot 10^{20k} r_k \log kr_k/\eps$, where $C'$ is the constant from \cref{lem:mis_on_grids}.
Either $u$ is not contained in $\{ \fD_i \}_{1 \le i \le k}$, or it cannot construct $\{ \fD_i \}_{1 \le i \le k}$ in that coding radius. 

The probability of the first event is upper bounded by $\frac{\left(5q(100d)^d\right)^{k}}{r_k}$ by \cref{cl:nvm}. 
The probability of the second event is upper bounded by \cref{lem:mis_on_grids} since, as discussed above this claim statement, outputting the solutions to the first $k$ MISes at $\fB(u, 10r_k)$ implies we can reconstruct $\{\fD_i\}_{1 \le i \le k}$ at $u$. In particular, we use \cref{lem:mis_on_grids} with $\eps = 1/r_k < \frac{\left(5q(100d)^d\right)^{k}}{r_k}$ to arrive at the bound at the right-hand side of \cref{cl:pls_kill_me}. 
\end{proof}

To finish, consider an arbitrary $r$ such that 
\begin{align}
\label{eq:kkk}
C\cdot 10^{20k}r_k \log (kr_k) < r < C\cdot 10^{20(k+1)}r_{k+1} \log ((k+1)r_{k+1})    
\end{align}
where $C$ is the constant from \cref{cl:pls_kill_me}. That claim then implies that 
\[
\P(\widetilde{R}_\fA > r) \le \frac{2^{O(k)}}{2^{ak^2}} = \frac{1}{2^{ak^2 - O(k)}}
\]
On the other hand, the right inequality of \cref{eq:kkk} implies that
\[
r < C\cdot 10^{20(k+1)} 2^{a(k+1)^2} \log((k+1)\cdot 2^{a(k+1)^2}) = 2^{ak^2 + O(k)}. 
\]
Combining the two bounds, we get
\[
\P(\widetilde{R}_\fA > r)
= \frac{2^{O(k)}}{r} 
\]
Finally, the left hand side of $r$ at \cref{eq:kkk} implies $r \ge 2^{ak^2}$, thus $k = O(\sqrt{\log r})$. We conclude that 
\[
\P(\widetilde{R}_\fA > r)
= \frac{2^{O(\sqrt{\log r})}}{r}. 
\]
In other words for an arbitrary parameter $\eps > 0$ we have
\[
\P(\widetilde{R}_\fA > 1/\eps \cdot 2^{O(\sqrt{\log 1/\eps})}) < \eps,
\]
as needed. 

\end{proof}

\section{Intermediate Problems -- Proof of \cref{thm:intermediate}}
\label{app:intermediate_problems}

In this section, we prove \cref{thm:intermediate} that we restate here for convenience. Our notation is the same as in \cref{app:missing_toast_proofs}. 

\intermediate*

\begin{proof}
We start with the following combinatorial claim where we use the fact that we are solving perfect matching in a graph with ``diagonal'' edges. 

\begin{claim}\label{cl:PM}
Let $d\in \mathbb{N}$ and $A\subseteq \mathbb{Z}^d$ be a finite set that is connected in $\mathbb{Z}^d$.
Then $A$ admits a perfect matching in $\Zbox$ if and only if $|A|$ is even.

In particular, if $|A|$ is odd and $v\in A$ such that $A\setminus\{v\}$ remains connected in the standard graph, then $A\setminus\{v\}$ admits perfect matching in the graph $\Zbox$.

\end{claim}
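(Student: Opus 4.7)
The forward direction ($|A|$ even is necessary) is immediate: a perfect matching partitions $A$ into pairs. For the converse, the plan is to prove by strong induction on $|A|$ that every finite $\mathbb{Z}^d$-connected set of even size admits a perfect matching in $\Zbox$. The bases $|A|=0$ and $|A|=2$ are trivial, the latter because $\mathbb{Z}^d$-connectedness forces the two vertices to be $\mathbb{Z}^d$-adjacent and hence $\Zbox$-adjacent.

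The whole inductive step rests on the following combinatorial lemma, which I would state and prove first: for any $\mathbb{Z}^d$-connected finite $A$ with $|A|\ge 4$, there exists a pair $u,v\in A$ which is $\Zbox$-adjacent and such that $A\setminus\{u,v\}$ is still $\mathbb{Z}^d$-connected. Granting the lemma, $|A\setminus\{u,v\}|=|A|-2$ is even and $\mathbb{Z}^d$-connected, so the induction hypothesis yields a perfect matching of $A\setminus\{u,v\}$, which we extend by the edge $\{u,v\}$.

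To prove the lemma I would fix a spanning tree $T$ of $A$ using only $\mathbb{Z}^d$-edges, root it arbitrarily, and take a deepest internal vertex $w$ (which exists because $|A|\ge 3$). By maximality of depth, all $T$-children of $w$ are leaves. Two cases arise. If $w$ has at least two children $v_1,v_2$, they are both $\mathbb{Z}^d$-neighbors of $w$, so $v_1-v_2$ is a difference of two distinct unit vectors and hence has $L^1$-norm exactly $2$, which gives $\Zbox$-adjacency; and since $v_1,v_2$ are leaves of $T$, deleting them preserves the spanning tree, so $A\setminus\{v_1,v_2\}$ is $\mathbb{Z}^d$-connected. If $w$ has a unique child $v$, then $w$ must have a $T$-parent (else $T$ would be the single edge $wv$, forcing $|A|=2$), so $w$ and $v$ are $\mathbb{Z}^d$-adjacent; deleting the leaf $v$ and then $w$ (which has become a leaf of the reduced tree) preserves connectedness of $T$, so $A\setminus\{w,v\}$ is $\mathbb{Z}^d$-connected.

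The ``in particular'' assertion then follows at once by applying the main implication to $A\setminus\{v\}$, which is $\mathbb{Z}^d$-connected by hypothesis and has even cardinality $|A|-1$. The only delicate point in the whole argument is the deepest-internal-vertex case split in the lemma: I expect it to be short, but it needs careful bookkeeping to ensure the two subcases are exhaustive and that the remaining tree is in fact connected in each.
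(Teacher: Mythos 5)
Your proof is correct and follows essentially the same route as the paper's: induct on $|A|$ via a spanning tree in $\mathbb{Z}^d$, locate a deepest internal vertex (the paper equivalently takes the parent $u$ of a deepest leaf), and split into the case of two leaf-children (matched by a diagonal or distance-$2$ edge of $\Zbox$) versus a unique child (matched with its parent), removing a $\Zbox$-edge that preserves connectivity each time. Isolating this as a standalone "removable edge" lemma is a purely organizational difference; the substance is identical.
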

\begin{proof}
It is clear that $|A|$ being even is a necessary condition.
We prove by induction that it is also sufficient.
If $|A|=2$, then the claim clearly holds.
Suppose that the claim holds for every set of size $2k$ and $|A|=2k+2$.

Let $T$ be a spanning tree of $A$ in the Cayley graph $\mathbb{Z}^d$, i.e., $T$ is a connected tree that contains all vertices of $A$.
Fix $v\in T$ and let $w\in T$ be a vertex of maximum distance from $v$ in $T$.
Since $T$ is a tree, $w$ is a leaf.
Denote as $u$ its unique neighbor and as $P$ the set of neighbors of $u$ that have maximum distance in $T$ from $v$.
Clearly, $w\in P$ and all elements in $P$ are leaves of $T$.

Suppose $|P|\ge 2$.
Note that any $w_0\not =w_1\in P$ form an edge in $\Zbox$ and $A\setminus \{w_0,w_1\}$ is still connected because $T\setminus \{w_0,w_1\}$ is a connected spanning tree of it.
Therefore put $\{w_0,w_1\}$ in the matching and apply induction on $A\setminus \{w_0,w_1\}$.

Suppose that $|P|=1$.
Then the degree of $u$ is $2$ because $|A|>2$.
In that case put $\{u,w\}$ in the matching and apply inductive hypothesis on $A\setminus \{u,w\}$.
Note that $A\setminus \{u,w\}$ is connected because $T\setminus \{u,w\}$ remains connected and spans it.
\end{proof}

It follows from Claim~\ref{cl:PM} that we can find a perfect matching of every finite set $A$, that is connected in $\mathbb{Z}^d$, possibly up to one point.
A rough strategy to prove that ${\bf M}(\pm{d})=d-1$ is to push this one problematic element inductively to infinity along a $1$-dimensional skeleton of some rectangular tiling of $\mathbb{Z}^d$.
The main difficulty is that we need a sequence of uniform local algorithms that produce this structure, which in turn leads to computing simultaneously a maximal independent set of increasing diameters as in \cref{app:missing_toast_proofs}. 


We start by defining the ($1$-dimensional) skeleton -- the object with which our algorithm works. In the following definition, $\fD$ is the set of points of a skeleton, $Y \subseteq \fD$ is a set of ``corner points'' of the skeleton, and $r$ is (roughly) the upper bound on the distance of any point of $\Z^d$ from the skeleton. We will slightly abuse notation and use $\fD$ both for a subset of $\Z^d$ and the subgraph of the Cayley graph of $\Z^d$ induced by $\fD$. 

\begin{definition}[Skeleton]
\label{def:skeleton}
A (1-dimensional) \emph{skeleton} $(\fD, Y, r)$ is a triplet, where $Y \subseteq \fD \subseteq \mathbb{Z}^d$, $Y$ is a subset of nodes that have degree greater than $2$ in the subgraph of $\Z^d$ induced by $\fD$, and $r > 0$, that satisfies the following:
\begin{enumerate}
    \item any two points in $Y$ are at least distance $10$ apart in $\mathbb{Z}^d$,
    \item $\fD \setminus Y$ induces a collection of paths in $\mathbb{Z}^d$,
    \item any $x \in \mathbb{Z}^d$ is of distance at most $10dr$ to the set $Y$, 
    \item the path metric $d_\fD({-},{-})$ induced by the subgraph $\fD$ satisfies $d_\fD(y_1, y_2) \le (10rd) d(y_1, y_2)$ for any two points $y_1, y_2 \in Y$. 
\end{enumerate}
\end{definition}

\begin{claim}
\label{claim:mlha}
Let $r>0$.
Then a skeleton $(\fD, Y, r)$ can be constructed with a uniform local algorithm of uniform complexity $O(\log^* 1/\eps)$. 
\end{claim}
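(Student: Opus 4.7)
The plan is to reduce the construction of a skeleton $(\fD, Y, r)$ to the well-known uniform construction of a rectangular tiling of $\mathbb{Z}^d$ with boxes of side lengths in $\{r', r'+1\}$ for some $r' = \Theta(r)$, and then take $Y$ to be the set of box-corners and $\fD$ to be the union of all $\mathbb{Z}^d$-vertices lying on a $1$-dimensional face (edge) of some box. This is essentially the $1$-skeleton of the tiling viewed as a subgraph of $\mathbb{Z}^d$.

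First, I would invoke the uniform version of Linial-style symmetry breaking (the MIS algorithm in the power graph $G^{Cr}$ for a suitable constant $C$ depending only on $d$) discussed in Remark 2.7 and used in the proof of Corollary 3.4; this has uniform local complexity $O(\log^* 1/\eps)$. Given this MIS, I would then form Voronoi cells and discretize each cell into rectangular boxes of side length in $\{r', r'+1\}$ via the construction from \cite[Theorem~3.1]{GaoJackson} already used in the proof of \cref{thm:grid_speedup} and \cref{thm:toast moment}; once the MIS is available, this requires only $O(1)$ further rounds, since the shape of each cell depends only on the $O(r)$-neighborhood of its center. Finally, define $Y$ as the box-corners and $\fD$ as the union of all $\mathbb{Z}^d$-vertices lying on the boundary edges (the $1$-faces) of some box.

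To verify the four conditions: condition (1) holds because any two distinct corners are at grid distance at least $r'$, which exceeds $10$ for $r$ large enough; condition (2) holds because removing the corners from $\fD$ leaves the relative interiors of the box-edges, which form a disjoint union of paths in $\mathbb{Z}^d$; condition (3) holds because every $x \in \mathbb{Z}^d$ lies in some box of diameter $O(r)$ and hence is within grid distance $\le 2r$ of the nearest corner (by taking $r' = \Theta(r)$ small enough relative to $r$, with the constant depending only on $d$); condition (4) is the bilipschitz bound between $d_\fD$ and the ambient grid metric, which holds because between any two corners one can route a path in $\fD$ axis by axis along box-edges, giving $d_\fD(y_1, y_2) \le C_d \cdot d(y_1, y_2)$ for some constant $C_d$ depending only on $d$; since consecutive corners along the same box-edge satisfy $d_\fD = d \in \{r', r'+1\}$, and $C_d \le r+1$ for $r$ large, this is easily within the required bound.

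The total uniform local complexity is dominated by the MIS step, yielding $O(\log^* 1/\eps)$. The only mild technical obstacle is verifying (4), the quasi-isometric bound between the skeleton path metric and the grid metric; this is a standard fact about $1$-skeletons of axis-aligned rectangular tilings and is easily handled by routing along coordinate directions, so all remaining parts of the proof are routine assembly of tools already developed in the paper.
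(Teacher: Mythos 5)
Your construction is the same as the paper's: run the Gao--Jackson box-tiling construction (\cite[Theorem~3.1]{GaoJackson}, already invoked in the proof of \cref{thm:grid_speedup}) with uniform local complexity $O(\log^* 1/\eps)$, take $Y$ to be the corner points and $\fD$ the union of the $1$-faces of the boxes, and check the four conditions. So the route is identical; the paper's own proof is an equally brief sketch of exactly this.

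There is, however, one concrete error in your verification. You justify condition (1) by asserting that ``any two distinct corners are at grid distance at least $r'$.'' This is false for the tiling produced by \cite[Theorem~3.1]{GaoJackson}: the boxes are not aligned in a product/grid pattern, so a corner of one box can lie in the relative interior of an edge (or face) of a neighbouring box, and two such degree-$\ge 3$ points of $\fD$ can be at grid distance $1$. Note also that, by the paper's definition, $Y$ must contain \emph{every} vertex of $\fD$ of degree greater than $2$ -- including these T-junctions -- or else condition (2) ($\fD\setminus Y$ is a disjoint union of paths) fails; so you cannot simply take $Y$ to be the ``official'' $2^d$ corners of each box. This is precisely the point at which the paper says the construction must be \emph{adapted} so that all corner points are at least distance $10$ apart (e.g.\ by locally perturbing the box boundaries near close junctions, which only costs $O(1)$ additional rounds and does not affect conditions (3) and (4) beyond constants). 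Your checks of conditions (3) and (4) are fine. So the gap is small and repairable, but as written the justification of (1), and implicitly of (2), rests on a false premise.
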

\begin{proof}[Proof sketch]
This result is a variant of \cite[Theorem~3.1]{GaoJackson} that shows how to partition $\mathbb{Z}^d$ into boxes with each dimension from the set $\{r_0, r_0+1\}$. Note that once such a set of boxes is found, we may take $Y$ as their corner points and $\fD$ as their edges. The distances between the corner points are stretched only by a small factor that can be upper bounded by $10rd$.

The construction of \cite{GaoJackson} is a continuous one, but as we know from \cref{subsec:CONT}, continuous constructions lead to local constructions of local complexity  $O(\log^* 1/\eps)$.
One can adapt this construction (distort the $1$-dimensional boundary of the boxes) so that all the corner points of the produced skeleton are at least distance $10$ apart.
\end{proof}

Recall, that an $(\alpha, \beta)$-ruling set is a set $S$ where any $x\not=y \in S$ have $d(x,y) \ge \alpha$ and for any $x \not\in S$ we can find $y \in S$ such that $d(x,y) \le \beta$.
Choose $r_k:=2^{ak^2}$ for some $a$ large enough.
Write $\tilde{\fA}=\{\fA_k\}_{k\in \mathbb{N}}$ for the sequence of uniform local algorithms where $\fA_1$ produces a skeleton $(\mathcal{C}_1, X_1, r_1)$ and $\fA_k$ for $k \ge 2$ produces an $(r_k, 2r_k)$-ruling set $X_k$, such that $X_{k+1} \subseteq X_k$. We now prove that such a sequence of uniform algorithms exists and analyze its uniform complexity to be used later. 

\begin{claim}
\label{cl:we_are_crazy}
There is a sequence $\tilde{\fA}=\{\fA_k\}_{k\in \mathbb{N}}$ of uniform local algorithms as above such that once $k\in \mathbb{N}$ is fixed, then the uniform complexity of $\fA_1, \dots, \fA_k$ constructing a skeleton $(\mathcal{C}_1, X_1, r_1)$ and $X_2, \dots, X_k$ such that every $u$ computes the output of $\fA_1, \dots, \fA_k$ on the whole ball $\fB\left(u, 10^{10k} \cdot r_{k}\right)$ is $O(10^{30k}r_k\log 1/\eps)$. 
\end{claim}
\begin{proof}
This follows from \cref{claim:mlha} and \cref{lem:mis_on_grids} put together by \cref{lem:parallel_composition}. Note that \cref{lem:mis_on_grids} provides a way to output MISes $X'_2, \dots, X'_k$ that, however, in general lack the property that $X'_{k+1} \subseteq X'_k$. As a simple remedy, we may inductively replace each $x'\in X'_k$, where $k\ge 2$, with the closest point $x\in X_{k-1}$, where in the step $k=2$ we use $X_1$ from the definition of the skeleton. A straightforward induction argument shows that $X_k$ is a $(r_k, 2r_k)$-ruling set. Finally, our definition of $r_k$ implies that the uniform complexity of the construction is $O(10^{20k}r_k \cdot \log (k r_k/\eps)) = O(10^{30k}r_k\log 1/\eps)$. 
\end{proof}

\paragraph{A ``Skeleton'' Construction}
We now describe a construction of a sequence $\{\fC_k\}_{k>0}$ of finer and finer subsets of the skeleton $\fC_1$.
We start with $\fC_1$ and then inductively, given the ruling set $X_{k+1}$, we define $\fC_{k+1}$ that is a subset of $\fC_k$.
In each step of the construction, we make sure that $X_{k}\subseteq \mathcal{C}_{k}$.

Suppose that $\mathcal{C}_k$ has been constructed for some $k\in \mathbb{N}$.
We have $X_{k+1}\subseteq X_k\subseteq \mathcal{C}_k$ by the properties of $\tilde{\fA}$.
Let $y,z\in X_{k+1}$ be such that $d(y,z)\le 5r_{k+1}$.
Pick a shortest path $I^{k+1}_{x,y}$ in $\mathcal{C}_k$ that connects $x$ and $y$.
Set
$$\mathcal{C}_{k+1}=\left\{I^{k+1}_{x,y}:x,y\in X_{k+1}, \ d(x,y)\le 5r_{k+1}\right\}.$$
This defines the sequence $\{\fC_k\}_{k>0}$.

\paragraph{Construction Properties}
We now prove that the construction has the following properties. 
\begin{enumerate}
    \item [(a)] $X_k\subseteq \mathcal{C}_k$, $\mathcal{C}_{k+1}\subseteq \mathcal{C}_k$
    \item [(b)] if $x\in \mathcal{C}_{k-1}\setminus \mathcal{C}_{k}$, then $d(x,\mathcal{C}_{k})\le 2r_{k}$ (where we interpret $\mathcal{C}_{0}=\mathbb{Z}^d$).
    
    \item [(c)] write $\partial_k$ for the induced graph distance on $\mathcal{C}_k$, then
    $$d(x,y)\le \partial_k(x,y)\le 5^k(10dr_1)d(x,y)$$
    whenever $x,y\in X_k$,
\item [(d)] \begin{equation}\label{eq:estimate on MIS}
    \P(\mathcal{C}_k)\le A\frac{5^{k}}{r^{d-1}_k}.
\end{equation}
for some constant $A$ that depends on $d$. Here, $\P(S)$ is a shorthand for $\P(v \in S)$ for an arbitrary node $v \in \Z^d$. 
\end{enumerate}
By construction, it is clear that (a) holds. This implies also (b) because $X_k$ is $2r_{k}$-maximal for every $k>0$ and the first equality of (c) is trivially satisfied.

\begin{claim*}
Condition (c) is satisfied.
\end{claim*}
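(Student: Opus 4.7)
The plan is to handle the two inequalities separately. The bound $d(x,y) \le \partial_k(x,y)$ is immediate, since $\mathcal{C}_k$ is a subgraph of $\mathbb{Z}^d$ and so any path realizing $\partial_k(x,y)$ is also a path in $\mathbb{Z}^d$. The substantive content is the upper bound, and I will prove it by induction on $k$.

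For the base case $k=1$, the triple $(\mathcal{C}_1, X_1, r_1)$ is a skeleton, so clause (4) of the skeleton definition gives $\partial_1(x,y) = d_{\mathcal{C}_1}(x,y) \le (r_1+1)\,d(x,y) \le 5(r_1+1)\,d(x,y)$ for all $x,y \in X_1$.

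For the inductive step from $k$ to $k+1$, fix $x,y \in X_{k+1}$; the case $x=y$ is trivial, so assume $x \ne y$, in which case $d(x,y) \ge r_{k+1}$ because $X_{k+1}$ is $r_{k+1}$-separated. I will track a $\mathbb{Z}^d$-geodesic $p\colon \{0,1,\dots,d(x,y)\}\to\mathbb{Z}^d$ from $x$ to $y$, sample it at times $t_i := i\, r_{k+1}$ for $0 \le i \le M-1$ with $M := \lfloor d(x,y)/r_{k+1}\rfloor$, and set $t_M := d(x,y)$. For each interior $0 < i < M$, I use that $X_{k+1}$ is $2r_{k+1}$-covering to pick some $x_i \in X_{k+1}$ with $d(p(t_i),x_i)\le 2r_{k+1}$, and I set $x_0 := x$, $x_M := y$. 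The triangle inequality combined with $t_{i+1}-t_i \le 2r_{k+1}$ yields $d(x_i,x_{i+1}) \le 5r_{k+1}$, so by the construction of $\mathcal{C}_{k+1}$ every path $I^{k+1}_{x_i,x_{i+1}}$ is included in $\mathcal{C}_{k+1}$ and its length equals $\partial_k(x_i,x_{i+1})$. Using $X_{k+1} \subseteq X_k$ and the inductive hypothesis gives $\partial_k(x_i,x_{i+1}) \le 5^k(r_1+1)\,d(x_i,x_{i+1}) \le 5^{k+1}(r_1+1)\,r_{k+1}$, and concatenating over the $M$ pieces together with $M r_{k+1} \le d(x,y)$ telescopes to $\partial_{k+1}(x,y) \le 5^{k+1}(r_1+1)\,d(x,y)$, closing the induction.

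The only delicate point is calibrating the sampling step along the geodesic so that two constraints are met simultaneously: consecutive sampled points in $X_{k+1}$ must lie within the $5r_{k+1}$-window needed for $I^{k+1}_{x_i,x_{i+1}}$ to exist, while the count $M$ times the per-segment cost $5^{k+1}(r_1+1)\,r_{k+1}$ must telescope to exactly $5^{k+1}(r_1+1)\,d(x,y)$ with no additive slack. Sampling at step precisely $r_{k+1}$ and pinning the endpoints $x_0 = x$, $x_M = y$ achieves both: the interior segments contribute at most $2r_{k+1} + r_{k+1} + 2r_{k+1} = 5r_{k+1}$ (the three terms are the two covering errors plus the step), the boundary segments contribute strictly less because one covering error is $0$, and the clean identity $M = \lfloor d(x,y)/r_{k+1}\rfloor$ avoids the extra $+1$ that would otherwise produce an $r_{k+1}$-sized residue and force a constant larger than $5$ in the induction.
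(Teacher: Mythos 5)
Your proof is correct and follows essentially the same route as the paper's: induct on $k$, sample a $\mathbb{Z}^d$-geodesic between $x,y\in X_{k+1}$ at steps of $r_{k+1}$, snap the sample points to nearby elements of the ruling set so that consecutive snapped points are within $5r_{k+1}$ (hence joined by some $I^{k+1}_{\cdot,\cdot}\subseteq\mathcal{C}_{k+1}$), apply the inductive hypothesis to each segment, and telescope using that the number of segments times $r_{k+1}$ is at most $d(x,y)$. The only difference is cosmetic bookkeeping at the endpoints (you pin $x_0=x$, $x_M=y$ and take $M=\lfloor d(x,y)/r_{k+1}\rfloor$, while the paper leaves $3r_{k+1}$ of slack at each end), and both yield the same constant $5^{k+1}(r_1+1)$.
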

\begin{proof}
Let $k=0$ and $x,y\in \mathcal{C}_1$.
Then we have that $\partial_0(x,y)\le (10dr_1)d(x,y)$ because $(\mathcal{C}_1,X_1,r_1)$ is a skeleton (see \cref{def:skeleton}).
Suppose that (c) holds for $k\in \mathbb{N}$.
We show that it remains valid for $k+1$.
Note that any set $I^{k+1}_{x,y}$ for any $x,y$ constructed in the $(k+1)$th iteration satisfies
\begin{equation}\label{eq:path}
\left|I^{k+1}_{x,y}\right|\le 5^{k}(10dr_1)r_{k+1}
\end{equation}
by condition (c) in the level $k$. 
In other words, if $d(x,y) \le 5r_{k+1}$, we added the set $I^{k+1}_{x,y}$ to $\fC_{k+1}$ and, hence, for this pair the condition (c) is satisfied. We now need to show this also for a (far) pair $x,y \in X_{k+1}$ with $p = d(x,y) > 5r_{k+1}$. 
Take any path $P$ from $x$ to $y$ in $\mathbb{Z}^d$ such that $|P|=p$.
We now split the path $P$ into subpaths of length $r_{k+1}$, up to the first and the last piece: we let $(p_i)_{i\le [q]}\subseteq P$ be a minimal sequence such that
\begin{itemize}
    \item $d(x,p_1)=3r_{k+1}$,
    \item  $d(p_i,p_{i+1})=r_{k+1}$ for every $i<q$,
    \item $2r_{k+1} \le d(p_q,y)\le 3r_{k+1}$.
\end{itemize}
Then we have $(q+5)r_{k+1}\le p\le (q+6)r_{k+1}$.
For every $i\in [q]$ pick $s_i\in X_{k+1}$ such that $d(p_i,s_i)\le 2r_{k+1}$.
This is possible because $X_{k+1}$ is $2r_{k+1}$-maximal.
Observe that
$$d(s_i,s_{i+1}),d(x,s_{0}),d(s_q,y)\le 5r_{k+1}$$
for every $i<q$.
Let $Q_0:=I^{k+1}_{x,s_0}$, $Q_{q}:=I^{k+1}_{s_q,y}$ and $Q_{i}:=I^{k+1}_{s_i,s_{i+1}}$ for every $i<q$.
Let $Q:={Q_0}^\frown ...^\frown Q_q$ that is, we $Q$ is a concatenation of those paths. The set $Q$ is a subset of $\fC_{k+1}$.
We have
\begin{equation*}
\begin{split}
\partial_{k+1}(x,y)\le & \ |Q|=\sum_{i\le q} |Q_i| \\
= & \ 5^{k+1}(10dr_1)r_{k+1}(q+1)\le 5^{k+1}(10dr_1)p=5^{k+1}(10dr_1)d(x,y)
\end{split}
\end{equation*}
by \eqref{eq:path}.
This finishes the proof of the claim. 
\end{proof}

Next, we show (d).
\begin{claim*}
Condition (d) is satisfied.
\end{claim*}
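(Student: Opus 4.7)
The plan is to prove (d) by induction on $k$, using the three key ingredients that are already available: the density of the ruling set $X_k$, the bounded degree of the ``skeleton graph'' built on $X_k$ (coming from a volume argument), and the length bound on the paths $I^{k+1}_{x,y}$ provided by the already-established property (c).

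For the base case $k=1$, one needs a volume argument for the 1-dimensional skeleton $\mathcal{C}_1$ of boxes of side length roughly $r_1$ produced by $\fA_1$. Such a skeleton has total ``length'' $O(r_1^{d-1})$ per box of volume $r_1^d$, so $\P(\mathcal{C}_1)\le O(1/r_1^{d-1})$, which gives the bound with room to spare. This is the easy step.

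For the inductive step, suppose (d) holds at level $k$ and consider $\mathcal{C}_{k+1}$. By construction,
\[
\mathcal{C}_{k+1}=\bigcup\{I^{k+1}_{x,y}:x,y\in X_{k+1},\ d(x,y)\le 5r_{k+1}\},
\]
so it suffices to bound the expected total length of these paths per unit of $\mathbb{Z}^d$-volume. First, since $X_{k+1}$ is $r_{k+1}$-separated, a volume argument gives $\P(x\in X_{k+1})\le C_d/r_{k+1}^d$ for some dimensional constant $C_d$. Second, for each fixed $x\in X_{k+1}$ the number of $y\in X_{k+1}$ with $d(x,y)\le 5r_{k+1}$ is at most $O_d(1)$, again by a volume argument using $r_{k+1}$-separation. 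Third, each path $I^{k+1}_{x,y}$ is a shortest path in $\mathcal{C}_k$ between two points of $X_k\supseteq X_{k+1}$, and property (c) at level $k$ yields
\[
|I^{k+1}_{x,y}|\le \partial_k(x,y)\le 5^k(r_1+1)\cdot d(x,y)\le 5^{k+1}(r_1+1)r_{k+1}.
\]
Multiplying the three estimates, the expected contribution of $\mathcal{C}_{k+1}$ per vertex is bounded by
\[
\P(\mathcal{C}_{k+1})\le \frac{C_d}{r_{k+1}^d}\cdot O_d(1)\cdot 5^{k+1}(r_1+1)r_{k+1}\le A\,\frac{5^{k+2}}{r_{k+1}^{d-1}},
\]
after absorbing the $r_1$-dependent factor into the constant $A$.

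The routine part is the bookkeeping of dimensional constants, so I would make sure the constant $A$ is chosen at the very start so that the base case and inductive step both close with the same $A$. The one potentially delicate point, and the main obstacle I would watch out for, is that one must not double count: the paths $I^{k+1}_{x,y}$ for different pairs may overlap. But since we are proving an upper bound on $\P(\mathcal{C}_{k+1})$, overlaps only help us, so a union bound over pairs $(x,y)$ with $x\in X_{k+1}$ and $y$ ranging over the $O(1)$ neighbors of $x$ in $X_{k+1}$ at distance $\le 5r_{k+1}$ is sufficient.
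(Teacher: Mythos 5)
Your proposal is correct and follows essentially the same route as the paper: bound the density of $X_k$ by $O(1/r_k^d)$ via $r_k$-separation, note each $x\in X_k$ has only $O_d(1)$ partners $y$ within distance $5r_k$, and use the path-length bound $|I^{k}_{x,y}|\le 5^{k}(r_1+1)r_{k}$ from (c), then multiply. The only cosmetic difference is that your ``induction on $k$'' never actually invokes the inductive hypothesis (d) at level $k$ — only the already-proved (c) — so the argument is really a direct one for each $k$, exactly as in the paper.
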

\begin{proof}
We start by upper bounding the size of $|X_k \cap \fB(x, 5r_k)|$ for any $x \in X_k$. Since $X_k$ is $r_k$-independent, any $x' \in X_k \cap \fB(x, 5r_k)$ defines a ball $\fB(x', r_k/2)$ disjoint from other such balls, we have $|\fB(x', r_k/2)| \ge (r_k/d)^d$. Since $|\fB(x, (5+1/2)r_k)| \le (15r_k)^d$, we conclude that $|X_k \cap \fB(x, r_k)| \le (3r_k)^d / (r_k/d)^d \le (100d)^d$.  

Thus, by \eqref{eq:path}, we have
\begin{align}
    \label{eq:mexican}
    \P(\mathcal{C}_{k})\le \P(X_k)(100d)^d 5^{k}(10dr_1)r_k
\end{align}
for every $k>0$.
Since $X_k$ is $r_k$-independent, each $x \in X_k$ defines a ball $\fB(x, r_k/2)$ of volume $|\fB(x, r_k/2)| \ge (r_k/d)^d$ disjoint from other such balls for other $x' \in X_k$ and we conclude that
$$\P(X_k)\le \frac{d^d}{r^d_k}.$$
Plugging into \cref{eq:mexican}, we conclude that there is a constant $A>0$ such that 
\begin{equation}
    \P(\mathcal{C}_k)\le \frac{A \cdot 5^{k}}{r^{d-1}_k}
\end{equation}
holds for every $k>0$.
This shows (d).
\end{proof}

\paragraph{Solving the Problem using $\{\fC_k\}_{k>0}$}
As a next step, we use the sequence $\{\fC_k\}_{k>0}$ and the combinatorial \cref{cl:PM} to show how one can  solve the problem $\pm{d}$ on almost the whole set $\fC_k \setminus \fC_{k+1}$. 
We first define a function $ \Omega : \mathbb{Z}^d\to\mathbb{Z}^d$ that moves every $x\in \mathcal{C}_k\setminus \mathcal{C}_{k+1}$ closer to $\mathcal{C}_{k+1}$ along $\mathcal{C}_k$.
The main observation is that the iterated preimage of every point under $\Omega$ is a finite connected set and by \cref{cl:PM}, every connected finite set contains at most one problematic point with respect to $\pm{d}$.
We then inductively move  this problematic point to infinity.



We first define $\Omega$ formally on $\mathbb{Z}^d \setminus \fC_1$.
For each $v \in \mathbb{Z}^d \setminus \fC_1$, let $\ell$ be its distance to $X_1$. We define $\Omega(x)$ to be any neighbor of $v$ with its distance to $X_1$ being $\ell - 1$.

Before finishing the full definition of $\Omega$, we introduce the following notation: We write $\Omega^{\rightarrow}(y)$ for the union of $\Omega^{-\ell}(y)$, where $\ell\in \mathbb{N}$, for every $y\in {\mathbb{Z}^d}$. Observe that for each $y \in X_1$, $\Omega^{\rightarrow}(y)$ is a finite set of points that are connected in $\mathbb{Z}^d$.

Now let $k>0$ and $x\in \mathcal{C}_k\setminus \mathcal{C}_{k+1}$.
Define $\Omega(x)\in \mathcal{C}_k$ such that $(x,\Omega(x))$ is an edge in $\fC_k$ and  $\partial_k(\Omega(x),\mathcal{C}_{k+1})<\partial_k(x,\mathcal{C}_{k+1})$.
This defines $\Omega$ with probability $1$ by (d).

We now observe that any point $x\in \fC_k$ cannot be too far away from $\fC_{k+1}$ in the metric induced by $\fC_k$. 
By the construction of $\mathcal{C}_k$, we find $y,y',z\in X_k$ such that $x\in I^{k}_{y,y'}$, $z\in X_{k+1}$ and $d(y,z)\le 2r_{k+1}$.
This implies, by \eqref{eq:path} and (c), that
\begin{equation}\label{eq:distance bound}
\begin{split}
\partial_{k}(x,\mathcal{C}_{k+1})\le & \ \partial_k(x,y)+\partial_k(y,z)\le 5^{k}(10dr_1)r_k+5^k(10dr_1)2r_{k+1} \\
\le & \ 5^{k+1}(10dr_1)r_{k+1}.
\end{split}
\end{equation}

Induction on $k\in \mathbb{N}$ together with \eqref{eq:distance bound} shows that $\left|\Omega^{\rightarrow}(y)\right|<\infty$.
Define $\operatorname{Pivot}_k=\Omega^{-1}(\mathcal{C}_{k+1})\cap \mathcal{C}_k$ for every $k\ge 1$.
It follows that $\Omega^{\rightarrow}(y)$ is connected for every $y\in \operatorname{Pivot}_k$ and for every $x\not \in \mathcal{C}_{k+1}$ there is a unique $p_k(x)\in \operatorname{Pivot}_k$ such that $x\in \Omega^\rightarrow(p_k(x))$.
Moreover, we have $d(x,p_k(x))\le \partial_k(x,p_k(x))\le 5^{k+1}(10dr_1)r_{k+1}$ by \eqref{eq:distance bound} whenever $x\in \mathcal{C}_{k}\setminus \mathcal{C}_{k+1}$.
Note that it follows from the assumption that $(\fC_1,X_1,r_1)$ is a skeleton that every $y\in \operatorname{Pivot}_k$ has degree $1$ in $\Omega^{\rightarrow}(y)$, in particular, $\Omega^{\rightarrow}(y)\setminus \{y\}$ is connected.
This is because $y$ must have a degree at most $2$ in $\mathbb{Z}^d$ because $\Omega(y)$ is a corner point.

We are ready to describe a solution to $\pm{d}$.
The construction proceeds by induction on $k\ge 1$.
Let $k=1$. Pick any $y\in \operatorname{Pivot}_1$.
Use Claim~\ref{cl:PM} to find a matching of $\Omega^{\rightarrow}(y)$ possibly up to $y$.
Write $M_1$ for the set of all unmatched vertices in $\operatorname{Pivot}_1$.
Note that if $y\in M_1$, then $\Omega(y)\in \mathcal{C}_2$.

Suppose that we have defined a solution up to $k\ge 1$.
This means that if we write $M_k$ for the set of unmatched vertices in the complement of $\mathcal{C}_{k+1}$, then $M_k\subseteq \operatorname{Pivot}_k$.
In particular, for every $y\in M_k$ we find $z\in \operatorname{Pivot}_{k+1}$ such that $y\in \Omega^{\rightarrow}(z)$. 
We show how to extend the construction to step $k+1$.
Let $y\in \operatorname{Pivot}_{k+1}$.
Then it follows that $\Omega^{\rightarrow}(y) \cap(\mathcal{C}_{k+1}\cup M_k)$ is connected because $\Omega(z)\in \mathcal{C}_{k+1}$ for every $z\in M_k$.
Another use of Claim~\ref{cl:PM} gives a matching of $\Omega^{\rightarrow}(y) \cap(\mathcal{C}_{k+1}\cup M_k)$ possibly up to $y$.
Note that every unmatched vertex in the complement of $\mathcal{C}_{k+2}$ is in $\operatorname{Pivot}_{k+1}$.

\paragraph{Uniform complexity of $\fA$}
It remains to argue that the solution can be constructed by a uniform local algorithm $\fA$ and compute its uniform complexity. 
Observe that if $y\not\in \mathcal{C}_{k+1}\cup \operatorname{Pivot}_k$, then $y$ can output its position in the matching whenever $y$ knows $\Omega^{\rightarrow}(p_k(y))$.
By \eqref{eq:distance bound}, $y\in \mathcal{C}_k\setminus \mathcal{C}_{k+1}$ can compute $p_{k}(y)$ if it knows $\{\mathcal{C}_i\}_{i\le k+1}$ on $\fB(y,5^{k+1}(10dr_1)r_{k+1})$.
An induction on $j\le k$ shows that $y\not \in \mathcal{C}_{k+1}$ can compute $p_{k}(y)$ if it knows $\{\mathcal{C}_i\}_{i\le k+1}$ on
$$\fB\left(y,2(10dr_1)+\sum_{j\le k}5^{j+1}(10dr_1)r_{j+1}\right)\subseteq \fB\left(y,10^3 \cdot 5^k \cdot r_{k+1}\right).$$
Moreover, we have $d(y,p_k(y))\le 10^3 \cdot 5^k \cdot r_{k+1}$ by similar inductive argument.
Consequently, $y\not\in \mathcal{C}_{k+1}$ can compute $\Omega^{\rightarrow}(p_k(y))$ if it knows $\{\mathcal{C}_i\}_{i\le k+1}$ on
\begin{equation}\label{eq:comp eq class}
\fB\left(y,10^4 \cdot 5^k \cdot r_{k+1}\right)
\end{equation}

\begin{claim}
\label{cl:5}
Let $k\in \mathbb{N}$ and $y\in \mathbb{Z}^d$.
In order to compute $\{\mathcal{C}_i\}_{i\le k}$ on \eqref{eq:comp eq class} it is enough that $y$ knows the output of $\{\fA_i\}_{i\le k}$ on 
$$
\fB\left(y,10^5 \cdot 5^k \cdot r_{k}\right).
$$
\end{claim}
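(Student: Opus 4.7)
The plan is to proceed by induction on $k$. For the base case $k=1$, the set $\fC_1$ is by definition the skeleton output by $\fA_1$, so $\fC_1 \cap \fB(y,R)$ is a deterministic function of $\fA_1$ restricted to $\fB(y, R + O(1))$; the stated radius $10^5 \cdot 5 \cdot r_1$ easily covers \eqref{eq:comp eq class} at level $k=1$ after one absorbs the $O(1)$ skeleton-locality slack into the constant.

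For the inductive step I will unwind the recursive definition of $\fC_k$: namely, $\fC_k$ is the union of the canonical shortest paths $I^{k}_{x_1, x_2}$ in $\fC_{k-1}$, taken over pairs $x_1, x_2 \in X_k$ with $d(x_1, x_2) \le 5 r_k$. The key geometric input is property (c), which gives $|I^k_{x_1,x_2}| \le 5^{k}(r_1+1) r_k$ in the path metric of $\fC_{k-1}$, and hence also in the ambient graph $\mathbb{Z}^d$. Consequently, for any $z \in \fB(y, R_{\mathrm{out}})$, deciding whether $z \in \fC_k$ depends only on $X_k \cap \fB(z, 5^{k}(r_1+1) r_k)$ and on $\fC_{k-1} \cap \fB(z, 5^{k}(r_1+1) r_k)$, both contained in the enlarged ball $\fB(y, R_{\mathrm{out}} + 5^{k}(r_1+1) r_k)$. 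The canonical choice of shortest path (e.g., lexicographic tie-breaking) is itself a deterministic function of $\fC_{k-1}$ on this region, so no extra information is required.

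Applying the induction hypothesis to recover $\fC_{k-1}$ on this enlarged ball, and iterating the recurrence down to the skeleton level, the total additive radius inflation telescopes to $\sum_{j=1}^{k} 5^{j+1}(r_1+1) r_j$. Because $r_j = 2^{a j^2}$ grows doubly exponentially, this sum is dominated by its final term and bounded by a small constant multiple of $5^{k} r_k$. Starting from $R_{\mathrm{out}}$ given by \eqref{eq:comp eq class} and adding this inflation, the resulting input ball fits inside $\fB(y, 10^5 \cdot 5^{k} \cdot r_k)$ for $a$ large enough (using in particular $r_{k+1}/r_k = 2^{a(2k+1)}$ to absorb the cross-level terms into the prefactor $10^5$), which yields the stated bound.

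The main obstacle is nothing conceptual but careful bookkeeping: one must verify that the geometric bound from (c), combined with the doubly exponential gap $r_{j+1} \gg r_j$, is enough to absorb every lower-order additive contribution produced by unrolling the induction into the single constant $10^5$ and to reconcile the indices $k$, $k+1$ appearing on the two sides of the claim. Once the telescoping estimate is written out explicitly, the claim follows directly.
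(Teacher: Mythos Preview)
Your approach is essentially the same as the paper's: both argue by induction on $k$, using the path-length bound \eqref{eq:path} (derived from property (c)) to show that computing $\fC_k$ on a ball of radius $N$ reduces to knowing $X_k$ and $\fC_{k-1}$ on a ball of radius $N + 5^{k}(r_1+1)r_k$, and then observing that the cumulative additive inflation $\sum_{j\le k} 5^{j}(r_1+1)r_j$ is dominated by its last term since $r_j = 2^{aj^2}$. The paper just names this cumulative quantity $S_k := (r_1+1)\sum_{i\le k}5^i r_i$ and verifies $S_{k+1}\le 10^4\cdot 5^k\cdot r_{k+1}$, which is exactly the telescoping estimate you describe.
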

\begin{proof}
Recall that $r_k=2^{ak^2}$ for every $k\in \mathbb{N}$ and put $S_k=(10dr_1)\sum_{i\le k}5^ir_i$.
Fix $N\in \mathbb{N}$.
We are now going to prove inductively that to output $\{\mathcal{C}_i\}_{i\le k+1}$ on $\fB(y,N)$ it is enough to know $\{\fA_i\}_{i\le k+1}$ on
$$\fB\left(y,N+S_{k+1}\right).$$
Setting $N=10^4 \cdot 5^k \cdot r_{k+1}$ finishes the proof since $S_{k+1}\le 10^4 \cdot 5^k \cdot r_{k+1}$.

Let $k=0$ and $N\in \mathbb{N}$.
We have $\mathcal{C}_1=\fA_1$ and $|I^1_{y_1,y_2}|\le 5(10dr_1)r_1$ by \eqref{eq:path} whenever $y_1, y_2\in Y_1$.
This implies that to compute $\mathcal{C}_1$ on $\fB(y,N)$ it is enough to know $\fA_1$ on $\fB(y,N+5(10dr_1)r_1)\subseteq \fB(y,N+S_1)$.

Suppose that the claim holds for $k\in \mathbb{N}$, we show how to extend it to $k+1$.
It is clear that any line segment of the form $I^{k+2}_{y_1,y_2}$ for $y_1, y_2\in Y_{k+2}$ that intersects $\fB(y,N)$ is contained in
\begin{equation}\label{eq:radius}
\fB(y,N+5^{k+2}(10dr_1)r_{k+2})
\end{equation}
by \eqref{eq:path}.
Therefore to output $\{\mathcal{C}_i\}_{i\le k+2}$ on $\fB(y,N)$, it is enough to output $\{\mathcal{C}_i\}_{i\le k+1}$ and $X_{k+2}$ on \eqref{eq:radius}.
By the inductive assumption, we have that it is enough to output $\{\fA_i\}_{i\le k+2}$ on 
$$\fB(y,N+5^{k+2}(10dr_1)r_{k+2}+S_{k+1}).$$
It remains to observe that $S_{k+2}=5^{k+2}(10dr_1)r_{k+2}+S_{k+1}$.
\end{proof}

As an immediate consequence we see that $\fA$ is a correct uniform local algorithm, i.e., given $y$ we find minimal $k\in \mathbb{N}$ such that $y\not \in \mathcal{C}_{k+1}\cup \operatorname{Pivot}_k$ compute $\{\fA_i\}_{i\le k+1}$ on $\fB\left(y,10^5 5^{k+1} r_{k+1}\right)$, obtain $\Omega^{\rightarrow}(p_k(y))$ and get the position in the matching.
This can be done with probability $1$ because
$$\P\left(\mathcal{C}_{k+1}\cup \operatorname{Pivot}_k\right)\le \P\left(\mathcal{C}_{k+1}\cup \Omega^{-1}(\mathcal{C}_{k+1})\right)\le 2d A\frac{5^{k+2}}{r^{d-1}_{k+1}}\to 0$$
by \eqref{eq:estimate on MIS}.

We can now compute the uniform complexity of $\fA$. 
Note that there is large enough $C$ such that for any $\eps > 0$, if $y$ does not know the answer after $2^{Ck} r_k \log (1/\eps)$ steps, either it is in $\fC_{k+1} \cup \operatorname{Pivot}_k$, which happens with probability $O(5^k / r_k^{d-1})$ by \cref{cl:5}, or the algorithm cannot construct $\{\fC_i\}_{i \le k}$ which by \cref{cl:we_are_crazy} happens with probability at most $\eps$. Choosing $\eps = O(5^k / r_k^{d-1})$, we get that there are some $C_1, C_2 > 0$ such that
\[
\P( R_\fA > 2^{C_1k} r_k ) \le 2^{C_2k} / r_k^{d-1}. 
\]

To finish, consider any $r$ and let $k$ be such that $2^{C_1k} r_k \le r < 2^{C_1(k+1)} r_{k+1} $. On one hand, we just computed that 
\[
\P( R_\fA > r) \le 2^{C_2k} / r_k^{d-1} = \frac{1}{2^{(d-1)ak^2 - O(k)}}. 
\]
On the other hand, we have $r < 2^{C_1(k+1)} 2^{a(k+1)^2} = 2^{ak^2 + O(k)}$. Combining the two bounds, we get
\[
\P(R_\fA > r) = \frac{2^{O(k)}}{r^{d-1}}
\]
Finally, since $r \ge 2^{ak^2}$, we have $k = O(\sqrt{\log r})$, thus we have
\[
\P(R_\fA > r) = \frac{2^{O(\sqrt{\log r})}}{r^{d-1}}
\]
or equivalently for arbitrary $\eps$ we have
\[
\P(R_\fA > \sqrt[d-1]{1/\eps} \cdot 2^{O(\sqrt{\log 1/\eps})}) < \eps
\]
as needed. 

\end{proof}

\end{document}